\documentclass{amsart}
\usepackage{amscd}
\usepackage{amsmath,empheq}
\usepackage{amsfonts}
\usepackage{amssymb}
\usepackage{mathrsfs}

\usepackage[all]{xy}

\textwidth=6in \textheight=9.5in \topmargin=-0.5cm
\oddsidemargin=0.5cm \evensidemargin=0.5cm

\newtheorem{theorem}{Theorem}
\newtheorem{ex}[theorem]{Example}
\newtheorem{lemma}[theorem]{Lemma}
\newtheorem{prop}[theorem]{Proposition}
\newtheorem{remark}{Remark}
\newtheorem{corollary}[theorem]{Corollary}
\newtheorem{claim}{Claim}

\newtheorem{definition}[theorem]{Definition}

\newenvironment{proof-sketch}{\noindent{\bf Sketch of Proof}\hspace*{1em}}{\qed\bigskip}


\everymath{\displaystyle}

\newcommand{\RR}{\mathbb R}
\newcommand{\NN}{\mathbb N}

\renewcommand{\leq}{\leqslant}

\renewcommand{\geq}{\geqslant}

\baselineskip=16pt plus 1pt minus 1pt

\begin{document}

\title[Sensitivity analysis for optimal control problems]{Sensitivity analysis for optimal control problems governed by nonlinear evolution inclusions}

\author[N.S. Papageorgiou]{Nikolaos S. Papageorgiou}
\address{Department of Mathematics, National Technical University, 
				Zografou Campus, 15780 Athens, Greece}
\email{\tt npapg@math.ntua.gr}

\author[V.D. R\u{a}dulescu]{Vicen\c{t}iu D. R\u{a}dulescu}
\address{Institute of Mathematics ``Simion Stoilow" of the Romanian Academy, P.O. Box 1-764,
          014700 Bucharest, Romania \&
 Department of Mathematics, University of Craiova, Street A. I. Cuza 13,
          200585 Craiova, Romania}
\email{\tt vicentiu.radulescu@imar.ro}

\author[D.D. Repov\v{s}]{Du\v{s}an D. Repov\v{s}}
\address{Faculty of Education and Faculty of Mathematics and Physics, University of Ljubljana, 
					Kardeljeva Plo\v{s}\v{c}ad 16, SI-1000 Ljubljana, Slovenia}
\email{\tt dusan.repovs@guest.arnes.si}

\keywords{Evolution triple, evolution inclusion, $PG$ and $G$ convergence, compact embedding, Hadamard well-posedness.\\
\phantom{aa} 2010 AMS Subject Classification: Primary 49J24, 49K20, Secondary 34G20}

\begin{abstract}
We consider a nonlinear optimal control problem governed by a nonlinear evolution inclusion and depending on a parameter $\lambda$. First we examine the dynamics of the problem and establish the nonemptiness of the solution set and produce continuous selections of the solution multifunction $\xi\mapsto S(\xi)$ ($\xi$ being the initial condition). These results are proved in a very general framework and are of independent interest as results about evolution inclusions. Then we use them to study the sensitivity properties of the optimal control problem. We show that we have Hadamard well-posedness (continuity of the value function) and we establish the continuity properties of the optimal multifunction. Finally we present an application on a nonlinear parabolic distributed parameter system.
\end{abstract}

\maketitle


\section{Introduction}

One of the important problems in optimal control theory, is the study of the variations of the set of optimal state-control pairs and of the value of the problem, when we perturb the dynamics, the cost functional and the initial condition of the problem. Such a sensitivity analysis (also known in the literature as ``variational analysis") is important because it gives information about the tolerances which are permitted in the specification of the mathematical models, it suggests ways to solve parametric problems and also can be useful in the computational analysis of the problem. For infinite dimensional  systems (distributed parameter systems), such investigations were conducted by Buttazzo and Dal Maso \cite{8}, Denkowski and Migorski \cite{13}, Ito and Kunisch \cite{24}, Papageorgiou \cite{30} (linear systems), Papageorgiou \cite{31}, Sokolowski \cite{37} (semilinear systems) and Hu and Papageorgiou \cite{22}, Papageorgiou \cite{33,32} (nonlinear systems). We also mention
  the books of Buttazzo \cite{7}, Dontchev and Zolezzi \cite{17}, Ito and Kunisch \cite{25}, Sokolowski and Zolezio \cite{38} (the latter for shape optimization problems). In this paper we conduct such an analysis for a very general class of systems driven by  nonmonotone evolution inclusions.

So, let $T=[0,b]$ be the time interval and $(X,H,X^*)$ an evolution triple of spaces (see Section 2). We assume that $X\hookrightarrow H$ compactly. The space of controls is modelled by a separable reflexive Banach space $Y$ and $E$ is a compact metric space and corresponds to the parameter space. As we have already mentioned, we consider systems monitored by evolution inclusions. These inclusions represent a way to model systems with deterministic uncertainties, see the books of Aubin and Frankowska \cite{2}, Fattorini \cite{18}, and Roubicek \cite{36}.

The problem under consideration is the following:
\begin{eqnarray}\label{eq1}
	\left\{\begin{array}{l}
		J(x,u,\lambda)=\int^b_0L(t,x(t),\lambda)dt+\int^b_0H(t,u(t),\lambda)dt+\hat{\psi}(\xi,x(b),\lambda)\rightarrow \inf=\\ \hspace{10cm}m(\xi,\lambda),\\
		-x'(t)\in A_{\lambda}(t,x(t))+F(t,x(t),\lambda)+G(t,u(t),\lambda)\ \mbox{for almost all}\ t\in T,\\
		x(0)=\xi,u(t)\in U(t,\lambda)\ \mbox{for almost all}\ t\in T,\lambda\in E.
	\end{array}\right\}
\end{eqnarray}

In this problem
$$A_{\lambda}:T\times X\rightarrow 2^{X^*}\ \mbox{for every}\ \lambda>0,\ F:T\times H\times E\rightarrow 2^H\setminus\{\emptyset\},\ G:T\times Y\times E\rightarrow 2^H\setminus\{\emptyset\}$$
and the precise conditions on them will be given in Section 4. For every initial state $\xi\in H$ and every parameter $\lambda\in E$,  we denote the set of admissible state-control pairs (that is, pairs $(x,u)$ which satisfy the dynamics and the constraints of problem (\ref{eq1})) by $Q(\xi,\lambda)$. We investigate the dependence  of $Q(\xi,\lambda)$ on the two variables $(\xi,\lambda)\in H\times E$. Also, $\Sigma(\xi,\lambda)$ denotes the set of optimal state-control pairs (that is, $(x^*,u^*)\in\Sigma(\xi,\lambda)$ such that $J(x^*,u^*,\xi,\lambda)=m(\xi,\lambda)$). So, $\Sigma(\xi,\lambda)\subseteq Q(\xi,\lambda)$. We establish the nonemptiness of the set $\Sigma(\xi,\lambda)$ and examine the continuity properties of the value function $(\xi,\lambda)\mapsto m(\xi,\lambda)$ and of the multifunction $(\xi,\lambda)\mapsto\Sigma(\xi,\lambda)$.

The nonemptiness and other continuity and structural properties of the set $Q(\xi,\lambda)$ are consequences of general results about evolution inclusions which we prove in Section 3 and which are of independent interest. The class of evolution inclusions considered in Section 3 is more general than the classes studied by Chen, Wang and Zu \cite{11}, Denkowski, Migorski and Papageorgiou \cite{16}, Liu \cite{28}, Papageorgiou and Kyritsi \cite{34}.

In the next section, for the convenience of the reader, we review the main mathematical tools which we will need in this paper.

\section{Mathematical Background}

Suppose that $V$ and $Z$ are Banach spaces and assume that $V$ is embedded continuously and densely into $Z$ (denoted by $V\hookrightarrow Z$). Then it is easy to check that
\begin{itemize}
	\item	$Z^*$ is embedded continuously into $V^*$;
	\item if $V$ is reflexive, then $Z^*\hookrightarrow V^*$.
\end{itemize}

Having this observation in mind, we can introduce the notion of evolution triple of spaces, which is central in the class of evolution equations considered here.
\begin{definition}\label{def1}
	A triple $(X,H,X^*)$ of spaces is said to be an ``evolution triple" (or ``Gelfand triple" or ``spaces in normal position"), if the following hold:
	\begin{itemize}
		\item[(a)] $X$ is a separable reflexive Banach space and $X^*$ is its topological dual;
		\item[(b)] $H$ is a separable Hilbert space identified with its dual $H^*=H$ (pivot space);
		\item[(c)] $X\hookrightarrow H$.
	\end{itemize}
\end{definition}

According to the remark made in the beginning of this section, we also have $H^*=H\hookrightarrow X^*$. In this paper we also assume that the embedding of $X$ into $H$ is  compact. Hence by Schauder's theorem  (see, for example, Gasinski and Papageorgiou \cite[Theorem 3.1.22, p. 275]{20}), so is the embedding of $H^*=H$ into $X^*$. In what follows, by $||\cdot||$ (resp. $|\cdot|,\ ||\cdot||_*$) we denote the norm of the space $X$ (resp. $H,X^*$). By $\left\langle \cdot,\cdot\right\rangle$ we denote the duality brackets for the pair $(X^*,X)$ and by $(\cdot,\cdot)$ the inner product of the Hilbert space $H$. We know that
$$\left.\left\langle \cdot,\cdot\right\rangle\right|_{H\times X}=(\cdot,\cdot)\,.$$

Also, let $\beta>0$ be such that
\begin{equation}\label{eq2}
	|\cdot|\leq\beta||\cdot||\,.
\end{equation}

We introduce the following space which has a central role in the study of the evolution inclusions. So, let $1<p<\infty$ and set
$$W_p(0,b)=\{x\in L^p(T,X):x'\in L^{p'}(T,X^*)\}\ \left(\frac{1}{p}+\frac{1}{p'}=1\right)\,.$$

In this definition the derivative of $x$ is understood in the sense of vectorial distributions (weak derivative). In fact, if we view $x$ as an $X^*$-valued function, then $x(\cdot)$ is absolutely continuous, hence strongly differentiable almost everywhere. Therefore
$$W_p(0,b)\subseteq AC^{1,p'}(T,X^*)=W^{1,p'}((0,b),X^*)\,.$$

The space $W_p(0,b)$, equipped with the norm
$$||x||_{W_p}=||x||_{L^p(T,X)}+||x'||_{L^{p'}(T,X^*)}\ \mbox{for all}\ x\in W_p(0,b),$$
becomes a separable reflexive Banach space. We know that
\begin{eqnarray}
	&&W_p(0,b)\hookrightarrow C(T,H),\label{eq3}\\
	&&W_p(0,b)\hookrightarrow L^p(T,H)\ \mbox{compactly}.\label{eq4}
\end{eqnarray}

The following integration by parts formula is very helpful:
\begin{prop}\label{prop2}
	If $x,y\in W_p(0,b)$, then $t\mapsto (x(t),y(t))$ is absolutely continuous and
	$$\frac{d}{dt}(x(t),y(t))=\left\langle x'(t),y(t)\right\rangle+\left\langle x(t),y'(t)\right\rangle\ \mbox{for almost all}\ t\in T.$$
\end{prop}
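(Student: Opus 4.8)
The plan is to prove the integration by parts formula for $x,y \in W_p(0,b)$ by first establishing it on a dense class of smooth functions and then passing to the limit. More precisely, I would argue as follows.

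\textbf{Step 1: Density.} First I would recall the standard fact that the set of $X$-valued "polynomials" (or more generally $C^\infty(T,X)$, finite sums $\sum t^k a_k$ with $a_k \in X$, or functions of the form $t \mapsto \sum \varphi_i(t)a_i$ with $\varphi_i \in C^\infty(T)$ and $a_i\in X$) is dense in $W_p(0,b)$ with respect to the $\|\cdot\|_{W_p}$-norm. This is a known approximation result; it can be found, e.g., in Gasinski--Papageorgiou \cite{20} or Zeidler, and relies on mollification in the time variable together with the density of $X$ in itself. So pick sequences $x_n, y_n$ in this smooth class with $x_n \to x$ and $y_n \to y$ in $W_p(0,b)$, hence also $x_n \to x$, $y_n \to y$ in $C(T,H)$ by the embedding \eqref{eq3}.

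\textbf{Step 2: The smooth case.} For smooth $X$-valued functions $x_n, y_n$, the map $t \mapsto (x_n(t), y_n(t))$ is $C^1$ and the product rule
$$\frac{d}{dt}(x_n(t),y_n(t)) = \langle x_n'(t), y_n(t)\rangle + \langle x_n(t), y_n'(t)\rangle$$
holds pointwise, using that $\langle\cdot,\cdot\rangle|_{H\times X} = (\cdot,\cdot)$ and that all quantities lie in $X$. Integrating over $[s,t] \subseteq T$ gives
$$(x_n(t),y_n(t)) - (x_n(s),y_n(s)) = \int_s^t \big[\langle x_n'(r), y_n(r)\rangle + \langle x_n(r), y_n'(r)\rangle\big]\,dr.$$

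\textbf{Step 3: Passage to the limit.} Now I would let $n \to \infty$ in this identity. The left-hand side converges pointwise (in fact uniformly in $s,t$) because $x_n \to x$, $y_n \to y$ in $C(T,H)$. For the right-hand side, $x_n' \to x'$ in $L^{p'}(T,X^*)$ and $y_n \to y$ in $L^p(T,X)$, so by the duality pairing $\langle x_n'(\cdot), y_n(\cdot)\rangle \to \langle x'(\cdot), y(\cdot)\rangle$ in $L^1(T)$ (write $\langle x_n', y_n\rangle - \langle x', y\rangle = \langle x_n' - x', y_n\rangle + \langle x', y_n - y\rangle$ and use boundedness of $\|y_n\|_{L^p(T,X)}$ together with $\|x'\|_{L^{p'}(T,X^*)} < \infty$); similarly for the symmetric term. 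Hence the integrals converge and we obtain
$$(x(t),y(t)) - (x(s),y(s)) = \int_s^t \big[\langle x'(r), y(r)\rangle + \langle x(r), y'(r)\rangle\big]\,dr$$
for all $s,t \in T$. Since the integrand belongs to $L^1(T)$, the right-hand side is an absolutely continuous function of $t$, which shows $t \mapsto (x(t),y(t))$ is absolutely continuous and that its derivative is the integrand for almost all $t$, as claimed.

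\textbf{Main obstacle.} The only genuinely nontrivial ingredient is the density assertion in Step 1 — that smooth (or polynomial) $X$-valued functions are dense in $W_p(0,b)$; the rest is routine limiting arguments. Since this density result is classical and available in the references already cited in the paper, I would simply quote it. Everything else reduces to the elementary product rule in the smooth case plus continuity of the duality pairing under the relevant convergences.
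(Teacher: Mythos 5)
Your argument is correct and is in fact the classical proof of this result; the paper itself offers no proof of Proposition \ref{prop2}, quoting it as a standard fact about evolution triples (it goes back to Lions and can be found in the references the paper already cites), and your route — density of smooth $X$-valued functions in $W_p(0,b)$, the elementary product rule in the smooth case, and passage to the limit using the continuity of the duality pairing — is exactly how those sources establish it. The only point worth flagging is your use of the embedding \eqref{eq3} in Step 3: within the paper's logic this is fine, since \eqref{eq3} is stated as known before Proposition \ref{prop2}, but in a self-contained treatment \eqref{eq3} is usually \emph{derived} from the smooth-case identity (take $y_n=x_n$, average over $s$, and deduce a Cauchy estimate in $C(T,H)$), so if you wanted to avoid any appearance of circularity you could obtain the uniform convergence of $x_n,y_n$ in $C(T,H)$ directly from Step 2 rather than by invoking \eqref{eq3}.
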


We know that for all $1\leq p< \infty$,
$$L^p(T,X)^*=L^{p'}(T,X^*)$$
with $p'=+\infty$ if $p=1$ (see Gasinski and Papageorgiou \cite[Theorem 2.2.9, p. 129]{20}).

Now, let $(\Omega,\Sigma)$ be a measurable space and $V$ a separable Banach space. We introduce the following hyperspaces:
\begin{eqnarray*}
	&&P_{f(c)}(V)=\{C\subseteq V:C\ \mbox{is nonempty, closed, (convex)}\}\\
	&&P_{(w)k(c)}(V)=\{C\subseteq V:C\ \mbox{is nonempty, (weakly-)compact, (convex)}\}\,.
\end{eqnarray*}

Given a multifunction $F:\Omega\rightarrow 2^V\backslash\{\emptyset\}$, the ``graph" of $F$ is the set
$${\rm Gr}\,F=\{(\omega,v)\in\Omega\times V:v\in F(\omega)\}\,.$$

We say that $F(\cdot)$ is ``graph measurable" if ${\rm Gr}\,F\in\Sigma\times B(V)$ with $B(V)$ being the Borel $\sigma$-field of $V$. If $\mu(\cdot)$ is a $\sigma$-finite measure on $\Sigma$ and $F:\Omega\rightarrow 2^V\backslash\{\emptyset\}$ is graph measurable, then the Yankov-von Neumann-Aumann selection theorem (see Hu and Papageorgiou \cite[Theorem 2.14, p. 158]{23}) implies that $F(\cdot)$ admits a measurable selection, that is, there exists a $\Sigma$-measurable function $f:\Omega\rightarrow V$ such that $f(\omega)\in F(u)$ $\mu$-almost everywhere. In fact, there is a whole sequence $\{f_n\}_{n\geq 1}$ of such measurable selections such that $F(\omega)\subseteq\overline{\{f_n(\omega)\}}$ $\mu$-almost everywhere (see Hu and Papageorgiou \cite[Proposition 2.17, p. 159]{23}). Moreover, the above results are valid if $V$ is only a Souslin space. Recall that a Souslin space need not be metrizable (see Gasinski and Papageorgiou \cite[p. 232]{21}). A multifunction $F:\Omega\
 rightarrow P_f(V)$ is said to be ``measurable" if for all $y\in V$, the function $$\omega\mapsto d(y,F(\omega))=\inf[||y-v||_V:v\in F(\omega)]$$ is $\Sigma$-measurable. A multifunction $F:\Omega\rightarrow P_f(V)$ which is measurable is also graph measurable. The converse is true if ($\Omega,\Sigma$) admits a complete $\sigma$-finite measure $\mu$. If $(\Omega,\Sigma,\mu)$ is a $\sigma$-finite measure space and $F:\Omega\rightarrow 2^V\backslash\{\emptyset\}$ is a multifunction, then for $1\leq p\leq\infty$ we introduce the set
$$S^P_F=\{f\in L^p(\Omega,Y):f(\omega)\in F(\omega)\ \mu\mbox{-a.e.}\}.$$

Evidently, $S^P_F\neq\emptyset$ if and only if $\omega\mapsto \inf[||v||_V:v\in F(\omega)]$ belongs to $L^p(\Omega)$. Moreover, the set $S^P_F$ is ``decomposable", that is, if $(A,f_1,f_2)\in\Sigma\times S^P_F\times S^P_F$, then
$$\chi_{A}f_{1}+\chi_{\Omega\backslash A}f_2\in S^P_F.$$

Here, for $C\in\Sigma$, by $\chi_C$ we denote the characteristic function of the set $C\in\Sigma$.

For every $D\subseteq \Sigma,\ D\neq\emptyset$, we define
\begin{eqnarray*}
	&&|D|=\sup[||v||_V:v\in D]\\
	\mbox{and}&&\sigma(v^*,D)=\sup[\left\langle v^*,v\right\rangle_V:v\in D]\ \mbox{for all}\ v^*\in V^*.
\end{eqnarray*}
Here, $\left\langle \cdot,\cdot\right\rangle_V$ denotes the duality brackets of the pair $(V^*,V)$. The function $\sigma(\cdot,D):V^*\rightarrow \bar{\RR}=\RR\cup\{+\infty\}$ is known as the ``support function" of $D$.

Let $Z,W$ be Hausdorff topological spaces. We say that a multifunction $G:Z\rightarrow 2^W\backslash\{\emptyset\}$ is ``upper semicontinuous" (usc for short), respectively ``lower semicontinuous" (lsc for short), if for all $U\subseteq W$ open, the set
$$G^+(U)=\{z\in Z:G(z)\subseteq U\}\ \mbox{respectively}\ G^-(U)=\{z\in Z:G(z)\cap U\neq\emptyset\}$$
is open in $Z$. If $G(\cdot)$ is both usc and lsc, then we say that $G(\cdot)$ is continuous. On a Hausdorff topological space $(W,\tau)$ ($\tau$ being the Hausdorff topology), we can define a new topology $\tau_{seq}$ whose closed sets are the sequentially $\tau$-closed sets. Then topological properties with respect to this topology have the prefix ``sequential". Note that $\tau\subseteq\tau_{seq}$ and the two are equal, if $\tau$ is first countable (see Buttazzo \cite[p. 9]{7} and Gasinski and Papageorgiou \cite[p. 808]{21}). We say that $G:Z\rightarrow 2^W\backslash\{\emptyset\}$ is ``closed", if the graph ${\rm Gr}\,G\subseteq Z\times W$ is closed.

For any Banach space $V$, on $P_f(V)$ we can define a generalized metric, known as the ``Hausdorff metric", by setting
$$h(E,M)=\max\left[\sup\limits_{e\in E}d(e,M),\sup\limits_{m\in M}d(m,E)\right]\,.$$

Recall that $(P_f(V),h)$ is a complete metric space (see Hu and Papageorgiou \cite[p. 6]{23}). If $Z$ is a Hausdorff topological space, a multifunction $G:Z\rightarrow P_f(V)$ is said to be ``$h$-continuous", if it is continuous from $Z$ into $(P_f(V),h)$.

Also, if $E,M\subseteq V$ are nonempty, bounded, closed and convex subsets, then
$$h(E,M)=\sup[|\sigma(v^*,E)-\sigma(v^*,M)|:v^*\in V^*,||v^*||_{V^*}\leq 1]$$
(H\"{o}rmander's formula).

Let $(W,\tau)$ be a Hausdorff topological space with topology $\tau$ and let $\{E_n\}_{n\geq 1}\subseteq 2^W\backslash\{\emptyset\}$. We define
 \begin{eqnarray*}
	&&K_{seq}(\tau)-\liminf\limits_{n\rightarrow\infty}E_n=\{y\in W:y=\tau-\lim\limits_{n\rightarrow\infty}y_n,y_n\in E_n\ \mbox{for all}\ n\in\NN\},\\
	&&K_{seq}(\tau)-\limsup\limits_{n\rightarrow\infty}E_n=\{y\in W:y=\tau-\lim\limits_{n\rightarrow\infty}y_{n_k},y_{n_k}\in E_{n_k},n_1<n_2<\ldots<n_k<\ldots\}.
\end{eqnarray*}

Sometimes we drop the $K_{seq}$-symbol and simply write $\tau-\limsup_{n\rightarrow\infty}E_n$ and $\tau-\liminf\limits_{n\rightarrow\infty}E_n$.

Returning to the setting of an evolution triple, we consider a sequence of multivalued maps $a_n,a:L^p(T,X)\rightarrow 2^{L^{p'}(T,X^*)}\backslash\{\emptyset\}$ ($n\in\NN$) such that for every $h^*\in L^{p'}(T,X^*)$ the inclusions
$$y'+a_n(y)\ni h^*\ (n\in\NN)\ \mbox{and}\ y'+a(y)\ni h$$
have unique solutions $e_n(h^*),e(h^*)\in W_p(0,b)$.

We say that $\frac{d}{dt}+a_n$ ``PG-converges" to $\frac{d}{dt}+a$ (denoted by $\frac{d}{dt}+a_n\xrightarrow{PG}\frac{d}{dt}+a$ as $n\rightarrow\infty$), if for every $h^*\in L^{p'}(T,X^*)$ we have
$$e_n(h^*)\stackrel{w}{\rightarrow}e(h^*)\ \mbox{in}\ W_p(0,b).$$

In what follows, by $X_w$ (respectively $H_w,X^*_w$) we denote the space $X$ (respectively $H,X^*$) furnished with the weak topology. Also, by $|\cdot|_1$ we denote the Lebesgue measure on $\RR$ and by $((\cdot,\cdot))$ we denote the duality brackets for the pair $(L^{p'}(T,X^*),L^p(T,X))$. So, we have
$$((h^*,f))=\int^b_0\left\langle h^*(t),f(t)\right\rangle dt\ \mbox{for all}\ h^*\in L^{p'}(T,X^*),\ \mbox{all}\ f\in L^p(T,X).$$

Next, let us recall some useful facts from the theory of nonlinear operators of monotone type.

So, let $V$ be a reflexive Banach space, $L:D(L)\subseteq V\rightarrow V^*$ a linear maximal monotone operator and $a:V\rightarrow 2^{V^*}$. We say that $a(\cdot)$ is ``$L$-pseudomonotone" if the following conditions hold:
\begin{itemize}
	\item[(a)] For every $v\in V,a(v)\in P_{wkc}(V^*)$.
	\item[(b)] $a(\cdot)$ is bounded (that is, maps bounded sets to bounded sets).
	\item[(c)] If $\{v_n\}_{n\geq 1}\subseteq D(L),\ v_n\stackrel{w}{\rightarrow}v\in D(L)$ in $V$, $L(v_n)\stackrel{w}{\rightarrow}L(v)$ in $V^*$, $v^*_n\in a(v_n)$ for all $n\in\NN$, $v^*_n\stackrel{w}{\rightarrow}v^*$ in $X^*$ and $\limsup\limits_{n\rightarrow\infty}\left\langle v^*_n,v_n-v\right\rangle_V\leq 0,$ then $v^*\in a(v)$ and $\left\langle v^*_n,v_n\right\rangle_V\rightarrow\left\langle v^*,v\right\rangle_V$
\end{itemize}

Such maps have nice surjectivity properties. The next result is due to Papageorgiou, Papalini and Renzacci \cite{35} and it extends an earlier single-valued result of Lions \cite[Theorem 1.2, p. 319]{27}.
\begin{prop}\label{prop3}
	Assume that $V$ is a reflexive Banach space which is strictly convex, $L:D(L)\subseteq V\rightarrow V^*$ is a linear maximal monotone operator and $A:V\rightarrow 2^{V^*}$ is $L$-pseudomonotone and strongly coercive, that is,
	$$\frac{\inf[\left\langle v^*,v\right\rangle_V:v^*\in A(v)]}{||v||_V}\rightarrow+\infty\ \mbox{as}\ ||v||_V\rightarrow+\infty.$$
	Then $R(L+V)=V^*$ (that is, $L+V$ is surjective).
\end{prop}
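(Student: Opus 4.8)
The plan is to produce a solution through a Galerkin scheme adapted to $D(L)$, to extract an a priori bound from the strong coercivity, and to pass to the limit by invoking the $L$-pseudomonotonicity condition (c); the surjectivity of $L+\lambda F$, with $F$ the duality map of $V$, serves as an auxiliary tool. First I would make two reductions. For a fixed $h^*\in V^*$, the multifunction $v\mapsto A(v)-h^*$ is again $L$-pseudomonotone and bounded, and it remains strongly coercive since $|\langle h^*,v\rangle_V|/\|v\|_V\le\|h^*\|_{V^*}$; hence it suffices to show $0\in R(L+A)$. By Troyanski's renorming theorem I may also assume that $V$ and $V^*$ are strictly convex, so that the normalized duality map $F\colon V\to V^*$ is single valued, bounded, demicontinuous, strictly monotone and maximal monotone. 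Since $L$ is linear and monotone, $\langle Lv,v\rangle_V\ge0$ for every $v\in D(L)$; therefore $L+\lambda F$ is maximal monotone and coercive, hence surjective, and being strictly monotone it is a bijection. Its inverse $T_\lambda:=(L+\lambda F)^{-1}\colon V^*\to D(L)$ is single valued, bounded and demicontinuous, and combining $Lx+\lambda Fx=g$ with $\lambda\|x\|_V^2\le\langle g,x\rangle_V$ gives $\|LT_\lambda g\|_{V^*}\le 2\|g\|_{V^*}$, so that $LT_\lambda$ maps bounded sets to bounded sets. I would also record that $D(L)$ is dense in $V$: otherwise some $f\in V^*\setminus\{0\}$ annihilates $\overline{D(L)}$, but solving $Lx+Fx=f$ and pairing with $x$ forces $\|x\|_V^2\le\langle f,x\rangle_V=0$, hence $f=0$, a contradiction.

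Next I would set up the Galerkin approximation. Fix an increasing sequence $V_1\subseteq V_2\subseteq\cdots$ of finite dimensional subspaces of $D(L)$ whose union is dense in $D(L)$ for the graph norm (hence dense in $V$); this is possible by the density of $D(L)$ in $V$ and its separability in the graph norm. Let $i_n\colon V_n\hookrightarrow V$ be the inclusion and $i_n^*$ its adjoint. On the finite dimensional space $V_n$ the multifunction $u\mapsto i_n^*\bigl(Lu+A(u)\bigr)$ has nonempty convex compact values and is upper semicontinuous — on $V_n$ the weak and the norm topologies coincide and $L$ is continuous there, so condition (c) delivers the closed graph of $i_n^*\circ A|_{V_n}$ — and it is coercive on $V_n$ thanks to $\langle Lu,u\rangle_V\ge0$ and the strong coercivity of $A$. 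A finite dimensional zero-existence theorem (a consequence of Kakutani's fixed point theorem) then yields $x_n\in V_n$ and $w_n\in A(x_n)$ with $\langle Lx_n+w_n,v\rangle_V=0$ for all $v\in V_n$. Testing with $v=x_n$ gives $\langle w_n,x_n\rangle_V=-\langle Lx_n,x_n\rangle_V\le0$, so the strong coercivity forces $\{x_n\}$ to be bounded in $V$, and then $\{w_n\}$ is bounded in $V^*$ because $A$ is bounded. Passing to a subsequence, $x_n\stackrel{w}{\rightarrow}x$ in $V$ and $w_n\stackrel{w}{\rightarrow}w$ in $V^*$.

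The heart of the proof, and the step I expect to be the main obstacle, is the passage to the limit. Once one has shown that $x\in D(L)$, that $Lx=-w$, and — crucially — that $Lx_n\stackrel{w}{\rightarrow}Lx$ in $V^*$, everything closes: from $\langle w_n,x_n\rangle_V=-\langle Lx_n,x_n\rangle_V$ and the monotonicity inequality $\liminf_n\langle Lx_n,x_n\rangle_V\ge\langle Lx,x\rangle_V$ (itself a consequence of the monotonicity of $L$ together with $x_n\stackrel{w}{\rightarrow}x$ and $Lx_n\stackrel{w}{\rightarrow}Lx$) one deduces $\limsup_n\langle w_n,x_n-x\rangle_V\le-\langle Lx,x\rangle_V-\langle w,x\rangle_V=0$, so condition (c) applies and yields $w\in A(x)$; hence $0=Lx+w\in Lx+A(x)$, that is $0\in R(L+A)$. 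The difficulty is precisely to secure the weak convergence of $Lx_n$ — not merely of $x_n$ — to $Lx$ with $x\in D(L)$, since the $L$-pseudomonotonicity is tailored exactly to sequences of this type; here the structure of the linear maximal monotone operator $L$ is decisive. One uses that $\mathrm{Gr}\,L$ is convex and norm closed, hence weakly closed, to identify the limit, and one must either choose the subspaces $V_n$ compatibly with $L$ so that $\{Lx_n\}$ inherits a bound (in the concrete parabolic applications one takes the $e_i$ to be eigenfunctions, so the associated projections are uniformly bounded), or else replace $L$ beforehand by a regularization $L_\lambda=L(I+\lambda F^{-1}L)^{-1}$ — everywhere defined, bounded, monotone and demicontinuous — solve the regularized inclusion by the same Galerkin scheme, and let $\lambda\downarrow0$, invoking (c) once more for the final identification. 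The remaining ingredients — the coercivity estimates, the extraction of weakly convergent subsequences, and the surjectivity of $L+\lambda F$ — are routine.
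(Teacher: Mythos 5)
You are trying to prove a result that the paper itself does not prove: Proposition \ref{prop3} is quoted from Papageorgiou, Papalini and Renzacci \cite{35}, so there is no in-paper argument to compare with, and your proposal has to stand on its own. As it stands it does not: the step you yourself flag as ``the main obstacle'' is exactly where the proof is left unfinished, and it is the whole content of the theorem. From the Galerkin relation you only know $i_n^*\bigl(Lx_n+w_n\bigr)=0$, i.e.\ $\left\langle Lx_n+w_n,v\right\rangle_V=0$ for $v\in V_n$; this does not give $Lx_n=-w_n$ in $V^*$, so the a priori bounds do not control $\{Lx_n\}$ at all, and there is no reason for $Lx_n\stackrel{w}{\rightarrow}Lx$, which is indispensable for invoking condition (c). Of the two repairs you gesture at, the first (choosing $V_n$ adapted to $L$, e.g.\ eigenfunctions) is simply unavailable for an abstract linear maximal monotone $L$; the second (replacing $L$ by the regularization $L_\lambda=L(I+\lambda F^{-1}L)^{-1}$) is indeed the standard route, but your sketch of ``invoking (c) once more for the final identification'' glosses over a real difficulty: if $L_\lambda x_\lambda+w_\lambda=0$ with $w_\lambda\in A(x_\lambda)$, then the points lying in $D(L)$ with controlled $L$-images are $y_\lambda=(I+\lambda F^{-1}L)^{-1}x_\lambda$, and $L y_\lambda=L_\lambda x_\lambda$, whereas the selectors $w_\lambda$ belong to $A(x_\lambda)$ with $x_\lambda\neq y_\lambda$. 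Condition (c) requires the test sequence $v_n\in D(L)$ and the selectors $v_n^*\in A(v_n)$ to be attached to the \emph{same} points, so it cannot be applied verbatim; one needs an additional argument (for instance the estimate $\|x_\lambda-y_\lambda\|_V\leq\lambda\,\|L_\lambda x_\lambda\|_{V^*}\rightarrow 0$ combined with a continuity or generalized-pseudomonotonicity property of $A$ bridging $A(x_\lambda)$ and $A(y_\lambda)$), and supplying this bridge is precisely what the cited proof has to do. Until that identification is carried out in full, your argument establishes the reduction, the coercivity estimates and the finite-dimensional solvability, but not the surjectivity.

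Two smaller points. Your Galerkin scheme presupposes a countable family of finite-dimensional subspaces whose union is graph-norm dense in $D(L)$; this uses separability of $V$ (and of $D(L)$ in the graph norm), which is not among the hypotheses of Proposition \ref{prop3} (it does hold in the paper's application, where $V=L^p(T,X)$ with $X$ separable, but the abstract statement does not grant it). Also, the renorming step should be stated as harmless because the hypotheses (linearity and maximal monotonicity of $L$, $L$-pseudomonotonicity, boundedness and strong coercivity of $A$) are invariant under passage to an equivalent norm; otherwise replacing the norm silently changes the duality map $F$ you rely on.
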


In the next section we obtain some results about a general class of evolution inclusions, which will help us study  problem (\ref{eq1}) (see Section 4).

\section{Nonlinear Evolution Inclusions}

Let $T=[0,b]$ and let ($X,H,X^*$) be an evolution triple with $X\hookrightarrow H$ compactly (see Definition \ref{def1}). In this section we deal with the following evolution inclusion:
\begin{equation}\label{eq5}
	\left\{\begin{array}{l}
		-x'(t)\in A(t,x(t))+E(t,x(t))\ \mbox{for almost all}\ t\in T,\\
		x(0)=\xi .
	\end{array}\right\}
\end{equation}

The hypotheses on the data of (\ref{eq5}) are the following:

\smallskip
$H(A)_1:\quad A:T\times X\rightarrow 2^{X^*}$ is a map such that
\begin{itemize}
	\item[(i)] for all $x\in X,\ t\mapsto A(t,x)$ is graph measurable;
	\item[(ii)] for almost all $t\in T,\ {\rm Gr}\, A(t,\cdot)$ is sequentially closed in $X_w\times X^*_w$ and $x\mapsto A(t,x)$ is pseudomonotone;
	\item[(iii)] for almost all $t\in T$, all $x\in X$ and all $h^*\in A(t,x)$, we have
	$$||h||_*\leq a_1(t)+c_1||x||^{p-1}$$
	with $2\leq p,\ a_1\in L^{p'}(T)$ and $c_1>0$;
	\item[(iv)] for almost all $t\in T$, all $x\in X$ and all $h^*\in A(t,x)$, we have
	$$\left\langle h^*,x\right\rangle\geq c_2||x||^p-a_2(t),$$
	with $c_2>0,\ a_2\in L^1(T)$.
\end{itemize}	

\begin{remark}
	If $A(\cdot,\cdot)$ is single-valued, then in hypothesis $H(A)(ii)$ we can drop the condition on the graph of ${\rm Gr}\, A(t,\cdot)$ and only assume that for almost all $t\in T,\ x\mapsto A(t,x)$ is pseudomonotone. Similarly, if for almost all $t\in T,A(t,\cdot)$ is maximal monotone. An example of where the condition on the graph of $A(t,\cdot)$ is satisfied is the following. For simplicity we drop the $t$-dependence
	$$A(x)=-{\rm div}\,\partial\varphi(Dx)-{\rm div}\,\xi(Dx)$$
	where $\varphi:L^p(\Omega,\RR^N)\rightarrow\RR$ is continuous, convex and $\xi:L^p(\Omega,\RR^N)\rightarrow\RR$ is continuous and $|\xi(y)|\leq \hat{c}(1+|y|^{\tau-1})$ for all $y\in\RR^N$, some $\hat{c}>0$ and with $1\leq\tau<p$. Then recalling that $W^{1,p}(\Omega)\hookrightarrow W^{1,\tau}(\Omega)$ compactly (see Zeidler \cite[p. 1026]{40}), we easily see that ${\rm Gr}\,A$ is sequentially closed in $W^{1,p}(\Omega)_w\times W^{1,p}(\Omega)^*_w$.
\end{remark}

$H(F)_1:$ $F:T\times H\rightarrow P_{{f_c}}(H)$ is a multifunction such that
\begin{itemize}
	\item[(i)] for all $x\in H$, $t\mapsto F(t,x)$ is graph measurable;
	\item[(ii)] for almost all $t\in T,\ {\rm Gr}\,F(t,\cdot)$ is sequentially closed in $H\times H_w$;
	\item[(iii)] for almost all $t\in T$, all $x\in H$ and all $h\in F(t,x)$
	$$|h|\leq a_3(t)+c_3|x|,$$
	with $a_3\in L^2(T),c_3>0$ and if $p=2$, then $\beta^2c_3<c_2$ (see (\ref{eq2})).
\end{itemize}

By a solution of problem (\ref{eq5}) we understand a function $x\in W_p(0,b)$ such that
$$-x'(t)=h^*(t)+f(t)\ \mbox{for almost all}\ t\in T,$$
with $h^*\in L^{p'}(T,X^*), f\in L^2(T,H)$ such that
$$h^*(t)\in A(t,x(t))\ \mbox{and}\ f(t)\in F(t,x(t))\ \mbox{for almost all}\ t\in T.$$

By $S(\xi)$ we denote the set of solutions of problem (\ref{eq5}). In the sequel we investigate the structure of $S(\xi)$.

Consider the multivalued map $a:L^p(T,X)\rightarrow 2^{L^{p'}(T,X^*)}$ defined by
\begin{equation}\label{eq6}
	a(x)=\{h^*\in L^{p'}(T,X^*):h^*(t)\in A(t,x(t))\ \mbox{for almost all}\ t\in T\}\ \mbox{for all}\ x\in L^p(T,X).
\end{equation}

Note that $a(\cdot)$ has values in $P_{wkc}(L^{p'}(T,X^*))$ (see hypotheses $H(A)(i),(iii)$ and use the Yankov-von Neumann-Aumann selection theorem (see Hu and Papageorgiou \cite[Theorem 2.15, p. 518]{23})).

\begin{lemma}\label{lem4}
	If hypotheses $H(A)_1$ hold, $x_n\stackrel{w}{\rightarrow}x$ in $W_p(0,b)$, $x_n(t)\stackrel{w}{\rightarrow}x(t)$ in $X$ for almost all $t\in T$, $h^*_n\stackrel{w}{\rightarrow}h^*$ in $L^{p'}(T,X^*)$ and $h^*_n\in a(x_n)$ for all $n\in\NN$, then $h^*\in a(x)$.
\end{lemma}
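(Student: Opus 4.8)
The plan is to show that the weak limit $h^*$ of the sequence $h^*_n \in a(x_n)$ lands in $a(x)$ by reducing the statement to an application of the pseudomonotonicity of $A(t,\cdot)$ together with the sequential closedness of $\mathrm{Gr}\, A(t,\cdot)$ in $X_w \times X^*_w$. The key technical device is a Mazur-type argument: since $h^*_n \stackrel{w}{\rightarrow} h^*$ in $L^{p'}(T,X^*)$, passing to convex combinations we obtain a sequence converging strongly in $L^{p'}(T,X^*)$, hence (along a subsequence) a sequence of convex combinations $g_n(t) := \sum_{k \in I_n} \mu_k^n h_k^*(t)$ converging to $h^*(t)$ strongly in $X^*$ for almost all $t \in T$. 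The difficulty is that convex combinations of the $h_k^*$ need not lie in $A(t, x(t))$ since the $x_k(t)$ vary, so this alone does not close the argument; it must be combined with the energy/pseudomonotonicity information.

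First I would fix a null set outside of which all the pointwise hypotheses hold: $x_n(t) \stackrel{w}{\rightarrow} x(t)$ in $X$, the bound $H(A)_1(iii)$, and the properties in $H(A)_1(ii)$. From $H(A)_1(iii)$ and $x_n \to x$ in $L^p(T,X)$ (which follows from compactness of $W_p(0,b) \hookrightarrow L^p(T,H)$ is not quite enough — but $x_n \stackrel{w}{\rightarrow} x$ in $L^p(T,X)$ suffices here), the sequence $\{h_n^*\}$ is bounded in $L^{p'}(T,X^*)$, consistent with the hypothesis. The plan is then to establish the crucial inequality
$$\limsup_{n\to\infty} \int_0^b \langle h_n^*(t), x_n(t) - x(t)\rangle\, dt \leq 0,$$
and more precisely to obtain, for almost every $t$, a subsequence along which $\limsup_n \langle h_n^*(t), x_n(t) - x(t)\rangle \leq 0$. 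This is where the main work lies. One route: use the integration-by-parts formula (Proposition \ref{prop2}) on $W_p(0,b)$ to rewrite $\int_0^b \langle x_n'(t), x_n(t) - x(t)\rangle\, dt$ in terms of boundary values, exploit that $x_n \stackrel{w}{\rightarrow} x$ in $W_p(0,b)$ implies $x_n(b) \stackrel{w}{\rightarrow} x(b)$ in $H$ and weak lower semicontinuity of $|\cdot|$, and combine with whatever relation links $h_n^*$ to $x_n'$. However, in the abstract setting of this lemma there is no such relation given — $h_n^*$ is merely an element of $a(x_n)$ — so instead I expect the argument to go purely pointwise: use the compact embedding $X \hookrightarrow H$ to upgrade $x_n(t) \stackrel{w}{\rightarrow} x(t)$ in $X$ to $x_n(t) \to x(t)$ in $H$ (along a subsequence, a.e. $t$, using also $W_p(0,b) \hookrightarrow L^p(T,H)$ compactly so that $x_n \to x$ in $L^p(T,H)$, hence $x_n(t) \to x(t)$ in $H$ for a.a. $t$ along a subsequence).

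Given that, for a.a. $t$ we have $x_n(t) \to x(t)$ strongly in $H$ and $x_n(t) \stackrel{w}{\rightarrow} x(t)$ weakly in $X$, and $h_n^*(t)$ bounded in $X^*$ (so weakly convergent along a further subsequence to some $\tilde h(t) \in X^*$). Then $\langle h_n^*(t), x_n(t) - x(t)\rangle \to 0$ because the pairing $\langle \cdot, \cdot\rangle|_{H \times X} = (\cdot,\cdot)$ and $x_n(t) - x(t) \to 0$ in $H$ while $h_n^*(t)$ is bounded in $X^*$ — wait, this requires $h_n^*(t) \in H$, which we do not have. The correct observation is: $h_n^*(t)$ bounded in $X^*$ and $x_n(t) - x(t) \stackrel{w}{\rightarrow} 0$ in $X$ gives only boundedness of the pairing, not convergence. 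So one genuinely needs pseudomonotonicity. The cleanest plan: apply the definition of pseudomonotonicity of $A(t,\cdot)$ to the sequence $x_n(t) \stackrel{w}{\rightarrow} x(t)$ in $X$ with $h_n^*(t) \in A(t, x_n(t))$; the pseudomonotonicity condition (which for pseudomonotone maps in the sense used here includes a weak-graph-closedness plus the property that if $\limsup \langle h_n^*(t), x_n(t) - x(t)\rangle \leq 0$ then for each $y$ there is $h^*(t) \in A(t,x(t))$ with $\liminf \langle h_n^*(t), x_n(t) - y\rangle \geq \langle h^*(t), x(t) - y\rangle$) will then identify the limit. The main obstacle, and the step I would dwell on, is verifying the sign condition $\limsup_n \langle h_n^*(t), x_n(t) - x(t)\rangle \leq 0$ for a.a. $t$; I expect this to follow from a Fatou/lower-semicontinuity argument applied to the integrated inequality obtained by testing the (hypothetical, but here absent) equation — so in the abstract lemma the resolution must instead use the uniform bound $H(A)_1(iv)$ giving $\langle h_n^*(t), x_n(t)\rangle \geq c_2\|x_n(t)\|^p - a_2(t)$, together with $h_n^* \stackrel{w}{\rightarrow} h^*$ and $x_n \stackrel{w}{\rightarrow} x$, via a convexity/Mazur argument on $L^{p'}(T,X^*) \times L^p(T,X)$ to pass to the limit in the bilinear form, concluding $h^*(t) \in A(t,x(t))$ a.e. by the sequential $X_w\times X^*_w$-closedness of $\mathrm{Gr}\, A(t,\cdot)$. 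I would finish by noting $h^* \in L^{p'}(T,X^*)$ (from the bound) so that $h^* \in a(x)$ by definition \eqref{eq6}.
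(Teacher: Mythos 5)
Your central plan --- first establishing the sign condition $\limsup_n\langle h^*_n(t),x_n(t)-x(t)\rangle\leq 0$ for almost all $t$ and then invoking the pseudomonotonicity of $A(t,\cdot)$ --- cannot be carried out and is also not the right tool for this lemma. As you yourself observe, there is no equation linking $h^*_n$ to $x'_n$ here, and the coercivity bound $H(A)_1(iv)$ gives a \emph{lower} bound on $\langle h^*_n(t),x_n(t)\rangle$, which is useless for bounding $\limsup_n\langle h^*_n(t),x_n(t)-x(t)\rangle$ from above; no such sign condition follows from the hypotheses, and none is needed. Your fallback --- Mazur convex combinations of $\{h^*_k\}$, ``passing to the limit in the bilinear form,'' and then the sequential $X_w\times X^*_w$-closedness of ${\rm Gr}\,A(t,\cdot)$ --- leaves unresolved exactly the obstacle you flagged in your opening paragraph: weak convergence $h^*_n\stackrel{w}{\rightarrow}h^*$ in $L^{p'}(T,X^*)$ gives no pointwise weak convergence $h^*_n(t)\stackrel{w}{\rightarrow}h^*(t)$ in $X^*$, and the a.e.\ strong limits of the Mazur combinations $g_n(t)=\sum_{k\in I_n}\mu^n_k h^*_k(t)$ are convex combinations of elements of the \emph{different} sets $A(t,x_k(t))$, so the graph-closedness of $A(t,\cdot)$ cannot be applied to them directly. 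As written, the proof does not close.

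The paper's proof avoids all of this by working with support functions. For fixed $v\in X$ one shows that $x\mapsto\sigma(v,A(t,x))$ is sequentially weakly upper semicontinuous on $X$: on a superlevel set take $x_n\stackrel{w}{\rightarrow}x$, pick $h^*_n\in A(t,x_n)$ realizing $\sigma(v,A(t,x_n))$ (possible since $A(t,x_n)\in P_{wkc}(X^*)$), use the growth bound $H(A)_1(iii)$ to extract a weak $X^*$-limit, and use hypothesis $H(A)_1(ii)$ to place that limit in $A(t,x)$. Then for $v\in L^p(T,X)$ one has $((h^*_n,v))\leq\sigma(v,a(x_n))=\int^b_0\sigma(v(t),A(t,x_n(t)))dt$ (decomposability of $a(x_n)$), and Fatou's lemma together with the hypothesis $x_n(t)\stackrel{w}{\rightarrow}x(t)$ in $X$ a.e.\ yields $((h^*,v))\leq\int^b_0\sigma(v(t),A(t,x(t)))dt=\sigma(v,a(x))$; since $a(x)$ is closed and convex in $L^{p'}(T,X^*)$, this forces $h^*\in a(x)$. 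Note that only graph closedness, the growth bound and the convexity of the values are used --- pseudomonotonicity of $A(t,\cdot)$ enters only in the subsequent lemma. Even if you wanted to salvage your Mazur device, you would still need this support-function upper semicontinuity to dominate $\langle g_n(t),v\rangle$ by $\sup_{k\geq n}\sigma(v,A(t,x_k(t)))$, i.e.\ essentially the same argument.
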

\begin{proof}
	Let $v\in X$ and consider the function $x\mapsto\sigma(v,A(t,x))$ (see Section 2). We will show that it is sequentially upper semicontinuous. To this end we need to show that given $\lambda\in\RR$, the superlevel set
	$$E_{\lambda}=\{x\in X:\lambda\leq\sigma(v,A(t,x))\}$$
	is sequentially closed in $X_w$. So, we consider a sequence $\{x_n\}_{n\geq 1}\subseteq E_{\lambda}$ and assume that
	$$x_n\stackrel{w}{\rightarrow}x\ \mbox{in}\ X.$$
	
	Let $h^*_n\in A(t,x_n)$ ($n\in\NN$) be such that
	\begin{equation}\label{eq7}
		\left\langle h^*_n,v\right\rangle=\sigma(v,A(t,x_n))\ \mbox{for all}\ n\in\NN
	\end{equation}
	(recall $A(t,x_n)\in P_{wkc}(X^*)$). Evidently, $\{h^*_n\}_{n\geq 1}\subseteq X^*$ is bounded (see hypothesis\\ $H(A)_1(iii)$) and so by passing to a subsequence if necessary, we may assume that
	\begin{eqnarray}\label{eq8}
		&&h^*_n\stackrel{w}{\rightarrow}h^*\ \mbox{in}\ X^*,\nonumber\\
		&\Rightarrow&h^*\in A(t,x)\ (\mbox{see hypothesis}\ H(A)_1(ii)).
	\end{eqnarray}
	
	Then we have
	\begin{eqnarray*}
		&&\lambda\leq\left\langle h^*,v\right\rangle\leq\sigma(v,A(t,x))\ (\mbox{see (\ref{eq7}),(\ref{eq8})}),\\
		&\Rightarrow&x\in E_{\lambda}.
	\end{eqnarray*}
	
	This proves the upper semicontinuity of the map $x\mapsto\sigma(v,A(t,x))$.
	
	Now let $v\in L^p(T,X)$. We have
	\begin{eqnarray*}
		&&((h^*_n,v))\leq\sigma(v,a(x_n))=\int^b_0\sigma(v(t),A(t,x_n(t)))dt\ \mbox{for all}\ n\in\NN\\
		&&\hspace{1cm}(\mbox{see Hu and Papageorgiou \cite[Theorem 3.24, p. 183]{23}}),\\
		 &\Rightarrow&((h^*,v))\leq\limsup\limits_{n\rightarrow\infty}\sigma(v,a(x_n))\leq\int^b_0\limsup\limits_{n\rightarrow\infty}\sigma(v(t),A(t,x_n(t)))dt\\
		&&\hspace{1cm}(\mbox{by Fatou's lemma})\\
		&&\leq\int^b_0\sigma(v(t),A(t,x(t)))dt\\
		&&\hspace{1cm}(\mbox{by the first part of the proof and since by hypothesis}\ x_n(t)\stackrel{w}{\rightarrow}x(t)\ \mbox{in}\ X)\\
		&&=\sigma(v,a(x)),\\
		&&\Rightarrow h^*\in a(x).
	\end{eqnarray*}
\end{proof}

\begin{lemma}\label{lem5}
	If hypotheses $H(A)_1$ hold, then the multivalued map $a:L^p(T,X)\rightarrow 2^{L^{p'}(T,X^*)}$ defined by (\ref{eq6}) is $L$-pseudomonotone.
\end{lemma}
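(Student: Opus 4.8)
The plan is to verify directly the three defining properties of an $L$-pseudomonotone map for the map $a(\cdot)$ of (\ref{eq6}), where $L=\frac{d}{dt}$ is the time-derivative operator on (a suitable subspace of) $W_p(0,b)$; with this $L$, the hypotheses ``$v_n\in D(L)$, $v_n\stackrel{w}{\rightarrow}v$ in $L^p(T,X)$ and $Lv_n\stackrel{w}{\rightarrow}Lv$ in $L^{p'}(T,X^*)$'' amount precisely to $v_n\stackrel{w}{\rightarrow}v$ in $W_p(0,b)$. Property~(a) was recorded just before Lemma~\ref{lem4}: nonemptiness follows from $H(A)_1(i),(iii)$ and the Aumann selection theorem, $w$-compactness from the growth bound together with reflexivity of $L^{p'}(T,X^*)$, and convexity from the convex values of $A(t,\cdot)$. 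Property~(b), boundedness, is immediate from $H(A)_1(iii)$: if $\|x\|_{L^p(T,X)}\le M$ and $h^*\in a(x)$, then $\|h^*(t)\|_*\le a_1(t)+c_1\|x(t)\|^{p-1}$ a.e., and since $(p-1)p'=p$ this gives $\|h^*\|_{L^{p'}(T,X^*)}^{p'}\le C\big(1+\|x\|_{L^p(T,X)}^p\big)$.

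The substance is property~(c). So let $v_n\stackrel{w}{\rightarrow}v$ in $W_p(0,b)$ with $v_n\in D(L)$, $v_n^*\in a(v_n)$, $v_n^*\stackrel{w}{\rightarrow}v^*$ in $L^{p'}(T,X^*)$ and $\limsup_n((v_n^*,v_n-v))\le0$. First I would pass to the limit in the state variable. By the compact embedding (\ref{eq4}) one has $v_n\rightarrow v$ in $L^p(T,H)$, so, after passing to a subsequence, $v_n(t)\rightarrow v(t)$ in $H$ for a.e.\ $t\in T$; and since $\{v_n\}$ is bounded in $L^p(T,X)$ with $X$ reflexive, along a further subsequence $v_n(t)\stackrel{w}{\rightarrow}v(t)$ in $X$ for a.e.\ $t$ as well — any $X$-bounded subsequence of $\{v_n(t)\}$ having a weak limit in $X$ which must coincide with the $H$-limit $v(t)$. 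Passing to subsequences is harmless, since the two conclusions of~(c) — the membership $v^*\in a(v)$ and the scalar limit $((v_n^*,v_n))\rightarrow((v^*,v))$ — are subsequence-stable. With the a.e.\ convergences just obtained and $v_n^*\stackrel{w}{\rightarrow}v^*$ in $L^{p'}(T,X^*)$, Lemma~\ref{lem4} (with $x_n=v_n$, $h_n^*=v_n^*$) gives the first conclusion, $v^*\in a(v)$.

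For the remaining conclusion, note that $v_n^*\stackrel{w}{\rightarrow}v^*$ already gives $((v_n^*,v))\rightarrow((v^*,v))$ and $\limsup_n((v_n^*,v_n-v))\le0$ holds by assumption, so it is enough to prove $\liminf_n((v_n^*,v_n-v))\ge0$. Put $\delta_n(t)=\langle v_n^*(t),v_n(t)-v(t)\rangle$, so $((v_n^*,v_n-v))=\int_0^b\delta_n(t)\,dt$. Combining the coercivity $H(A)_1(iv)$ with the growth $H(A)_1(iii)$ and Young's inequality — the term $c_2\|v_n(t)\|^p$ absorbing the cross term $c_1\|v_n(t)\|^{p-1}\|v(t)\|$ — one gets a pointwise lower bound $\delta_n(t)\ge-\gamma(t)$ a.e., with $\gamma\in L^1(T)$ independent of $n$. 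Next I claim $\liminf_n\delta_n(t)\ge0$ for a.e.\ $t$: if not, then along some subsequence $\delta_n(t)\rightarrow\ell<0$; the same coercivity/growth estimate, applied along this subsequence, forces $\{v_n(t)\}$ to stay bounded in $X$ (otherwise $\delta_n(t)\rightarrow+\infty$ along a further subsequence), hence $v_n(t)\stackrel{w}{\rightarrow}v(t)$ in $X$ along that subsequence; but then, using $v(t)$ as test element in the pseudomonotonicity of $A(t,\cdot)$ (hypothesis $H(A)_1(ii)$, applicable since $\delta_n(t)\rightarrow\ell\le0$ along that subsequence), one obtains $\liminf_n\delta_n(t)\ge0$ along that subsequence, i.e.\ $\ell\ge0$, a contradiction. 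Since $\delta_n+\gamma\ge0$ and $\liminf_n\delta_n\ge0$ a.e., Fatou's lemma gives $0\le\int_0^b\liminf_n\delta_n(t)\,dt\le\liminf_n\int_0^b\delta_n(t)\,dt=\liminf_n((v_n^*,v_n-v))$; together with $\limsup_n((v_n^*,v_n-v))\le0$ this yields $((v_n^*,v_n-v))\rightarrow0$, which finishes the proof of~(c).

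I expect the crux to be precisely the passage between the \emph{global} hypothesis $\limsup_n((v_n^*,v_n-v))\le0$ and the \emph{pointwise-in-$t$} pseudomonotonicity of the maps $A(t,\cdot)$: one cannot read a pointwise $\limsup$ inequality off the integral one, and it is the coercivity $H(A)_1(iv)$ — turning control of $\delta_n(t)$ into $X$-boundedness of $\{v_n(t)\}$ — combined with the Fatou step that bridges the gap. The accompanying measure-theoretic bookkeeping (extracting a single subsequence along which all the required a.e.\ convergences hold, and keeping track of null sets) is the other point that needs care.
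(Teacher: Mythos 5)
Your overall strategy for property (c) is essentially the paper's: the set $D=\{t:\liminf_n\delta_n(t)<0\}$, the coercivity--growth lower bound from $H(A)_1(iii),(iv)$, the pointwise pseudomonotonicity of $A(t,\cdot)$ along a subsequence realizing the liminf, and the extended Fatou lemma; that part is sound. The genuine gap is the step where you claim that, because $\{v_n\}$ is bounded in $L^p(T,X)$ and $X$ is reflexive, one has (along a further subsequence) $v_n(t)\stackrel{w}{\rightarrow}v(t)$ in $X$ for a.e.\ $t$, and you then feed this into Lemma \ref{lem4} to conclude $v^*\in a(v)$. Norm-boundedness in $L^p(T,X)$ gives no pointwise control: it does not imply that $\{\|v_n(t)\|\}_{n\ge1}$ is bounded for a.e.\ $t$, not even along a subsequence (there are scalar sequences bounded in $L^p(0,1)$, e.g.\ of run-length type built from the binary digits of $t$, every subsequence of which has almost everywhere infinite supremum), and the a.e.\ convergence $v_n(t)\rightarrow v(t)$ in $H$ does not help, since it says nothing about the $X$-norms. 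So the hypothesis of Lemma \ref{lem4} that $x_n(t)\stackrel{w}{\rightarrow}x(t)$ in $X$ a.e.\ is simply not available at the point where you invoke it, and the conclusion $v^*\in a(v)$ is unproved as written.

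The paper obtains this pointwise weak $X$-convergence only \emph{after} the pointwise analysis of $\delta_n$ (its $\vartheta_n$): from $\liminf_n\vartheta_n(t)\ge0$ a.e.\ and the uniformly integrable minorant it upgrades, via Vitali, to $\vartheta_n\rightarrow0$ in $L^1(T)$ (see (\ref{eq17})), passes to a subsequence with $\vartheta_n(t)\rightarrow0$ a.e., and then the coercivity bound (\ref{eq11}) forces $\{v_n(t)\}$ to be bounded in $X$ for a.e.\ $t$, whence $v_n(t)\stackrel{w}{\rightarrow}v(t)$ in $X$ a.e.\ (the limit identified through the $H$-limit), and only then Lemma \ref{lem4} gives $v^*\in a(v)$. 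Your own estimates already contain the fix: with your $n$-independent $\gamma\in L^1(T)$, the a.e.\ inequality $\liminf_n\delta_n\ge0$ gives $\delta_n^-\rightarrow0$ a.e.\ dominated by $\gamma$, hence $\int_0^b\delta_n^-dt\rightarrow0$; combined with $\int_0^b\delta_n\,dt\rightarrow0$ this yields $\delta_n\rightarrow0$ in $L^1(T)$, so along a subsequence $\delta_n(t)\rightarrow0$ a.e., and the coercivity/growth bound then gives the a.e.\ $X$-boundedness and weak $X$-convergence needed for Lemma \ref{lem4}. In short, the membership $v^*\in a(v)$ must be derived after, not before, the pointwise control of $\delta_n$; as written, your derivation of it rests on a false claim.
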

\begin{proof}
	Suppose $x_n\stackrel{w}{\rightarrow}x$ in $W_p(0,b),h^*_n\in a(x_n)$ for all $n\in\NN$, $h^*_n\stackrel{w}{\rightarrow}h^*$ in $L^{p'}(T,X^*)$ and
	\begin{equation}\label{eq9}
		\limsup\limits_{n\rightarrow\infty}((h^*_n,x_n-x))\leq 0.
	\end{equation}
	
	From (\ref{eq3}) we infer that
	\begin{equation}\label{eq10}
		x_n(t)\stackrel{w}{\rightarrow}x(t)\ \mbox{in}\ H\ \mbox{for all}\ t\in T\ \mbox{as}\ n\rightarrow\infty\,.
	\end{equation}
	
	We set $\vartheta_n(t)=\left\langle h^*_n(t),x_n(t)-x(t)\right\rangle$. Let $N$ be the Lebesgue-null set in $T=[0,b]$ outside of which hypotheses $H(A)_1(ii),(iii),(iv)$ hold. Using hypotheses $H(A)_1(iii),(iv)$ we have
	\begin{equation}\label{eq11}
		\vartheta_n(t)\geq c_2||x_n(t)||^2-a_2(t)-(a_1(t)+c_1||x_n(t)||^{p-1})||x(t)||\ \mbox{for all}\ t\in T\backslash N.
	\end{equation}
	
	We introduce the Lebesgue measurable set $D\subseteq T$ defined by
	$$D=\{t\in T:\liminf\limits_{n\rightarrow\infty}\vartheta_n(t)<0\}.$$
	
	Suppose that $|D|_1>0$. If $t\in D\cap(T\backslash N)$, then from (\ref{eq11}) we see that
	$$\{x_n(t)\}_{n\geq 1}\subseteq X\ \mbox{is bounded}.$$
	
	Then it follows from (\ref{eq10}) that
	\begin{equation}\label{eq12}
		x_n(t)\stackrel{w}{\rightarrow}x(t)\ \mbox{in $X$ for all}\ t\in D\cap(T\backslash N).
	\end{equation}
	
	We fix  $t\in D\cap(T\backslash N)$ and choose a subsequence $\{n_k\}$ of $\{n\}$ (in general this subsequence depends on $t$) such that
	$$\lim\limits_{k\rightarrow\infty}\vartheta_{n_k}(t)=\liminf\limits_{n\rightarrow\infty}\vartheta_n(t).$$
	
	By hypothesis $H(A)_1(ii)$, $A(t,\cdot)$ is pseudomonotone and since $t\in D$, we infer that
	$$\lim\limits_{k\rightarrow\infty}\left\langle h^*_{n_k}(t),x_{n_k}(t)-x(t)\right\rangle=0,$$
	a contradiction. So, $|D|_1=0$ and we have
	\begin{equation}\label{eq13}
		0\leq\liminf\limits_{n\rightarrow\infty}\vartheta_n(t)\ \mbox{for almost all}\ t\in T.
	\end{equation}
	
	Invoking the extended Fatou's lemma (see Denkowski, Migorski and Papageorgiou \cite[Theorem 2.2.33, p. 145]{15}), we have
	\begin{eqnarray}\label{eq14}
		0&\leq&\int^b_0\liminf\limits_{n\rightarrow\infty}\vartheta_n(t)dt\ (\mbox{see (\ref{eq13})})\nonumber\\
		&\leq&\liminf\limits_{n\rightarrow\infty}\int^b_0\vartheta_n(t)dt\nonumber\\
		&\leq&\limsup\limits_{n\rightarrow\infty}\int^b_0\vartheta_n(t)dt\nonumber\\
		&=&\limsup\limits_{n\rightarrow\infty}\int^b_0\left\langle h^*_n(t),x_n(t)-x(t)\right\rangle dt\nonumber\\
		&=&\limsup\limits_{n\rightarrow\infty}((h^*_n,x_n-x))\leq 0\ (\mbox{see (\ref{eq9})}),\nonumber\\
		\Rightarrow&&\int^b_0\vartheta_n(t)dt\rightarrow 0.
	\end{eqnarray}
	
	We write
	\begin{equation}\label{eq15}
		|\vartheta_n(t)|=\vartheta^+_n(t)+\vartheta^-_n(t)=\vartheta_n(t)+2\vartheta^-_n(t).
	\end{equation}
	
	Note that
	\begin{equation}\label{eq16}
		\vartheta^-_n(t)\rightarrow 0\ \mbox{for almost all}\ t\in T\ (\mbox{see (\ref{eq13})}).
	\end{equation}
	
	Moreover, from (\ref{eq11}) we have
	$$\vartheta_n(t)\geq\eta_n(t)\ \mbox{for almost all}\ t\in T,\ \mbox{all}\ n\in\NN,$$
	with $\{\eta_n\}_{n\geq 1}\subseteq L^1(T)$ uniformly integrable. Then
	$$\vartheta_n^-(t)\leq\eta^-_n(t)\ \mbox{for almost all}\ t\in T,\ \mbox{all}\ n\in\NN,$$
	with $\{\eta^-_n\}_{n\geq 1}\subseteq L^1(T)$ uniformly integrable. Using (\ref{eq16}) and invoking Vitali's theorem we infer that
	\begin{eqnarray}\label{eq17}
		&&\vartheta^-_n\rightarrow 0\ \mbox{in}\ L^1(T),\nonumber\\
		&\Rightarrow&\vartheta_n\rightarrow 0\ \mbox{in}\ L^1(T)\ (\mbox{see (\ref{eq14}), (\ref{eq15})}).
	\end{eqnarray}
	
	Then we have
	\begin{eqnarray*}
		&&|((h^*_n,x_n))-((h^*,x))|\\
		&&\leq|((h^*_n,x_n-x))|+|((h^*_n-h^*,x))|\rightarrow 0\\
		&&(\mbox{see (\ref{eq17}) and recall that}\ h^*_n\stackrel{w}{\rightarrow}h^*\ \mbox{in}\ L^{p'}(T,X^*)),\\
		&\Rightarrow&((h^*_n,x_n))\rightarrow((h^*,x)).
	\end{eqnarray*}
	
	In addition, from (\ref{eq12}) and Lemma \ref{lem4}, we have that
	$$h^*\in a(x).$$
	
	This proves the $L$-pseudomonotonicity of $a(\cdot)$.
\end{proof}

\begin{remark}
	From the above proof it is clear why in the case of a single-valued map $A(t,x)$, in hypothesis $H(A)_1(ii)$ we can drop the condition on the graph of $A(t,\cdot)$ and only assume that for almost all $t\in T$ $x\mapsto A(t,x)$ is pseudomonotone. Indeed, in this case, from (\ref{eq17}) we have (at least for a subsequence) that
	\begin{eqnarray*}
		&&\vartheta_n(t)\rightarrow 0\ \mbox{for almost all}\ t\in T,\\
		&\Rightarrow&A(t,x_n(t))\stackrel{w}{\rightarrow}A(t,x(t))\ \mbox{for almost all}\ t\in T\ \mbox{in}\ X^*\ (\mbox{since}\ A(t,\cdot)\ \mbox{is pseudomonotone}).
	\end{eqnarray*}

	In the multivalued case, there is no canonical way to identify the pointwise limit of the sequence $\{h^*_n(t)\}_{n\geq 1}\subseteq X^*$. If for almost all $t\in T,A(t,\cdot)$ is maximal monotone, then again, we do not need the graph hypothesis on $A(t,\cdot)$. In this case $a(\cdot)$ is also maximal monotone and then the lemma is a consequence of (\ref{eq9}) and Lemma 1.3, p. 42 of Barbu \cite{4}. It is worth mentioning that a similar strengthening of the topology in the range space was used by Defranceschi \cite{12}, while studying $G$-convergence of multivalued operators.
\end{remark}

Without loss of generality, invoking the Troyanski renorming theorem (see Gasinski and Papageorgiou \cite[Remark 2.115, p. 241]{21}), we may assume that both $X$ and $X^*$ are locally uniformly convex, hence $L^p(T,X)$ and $L^{p'}(T,X^*)$ are strictly convex.

We are now ready for the first result concerning the solution set $S(\xi)$.
\begin{theorem}\label{th6}
	If hypotheses $H(A)_1,H(F)_1$ hold and $\xi\in H$, then the solution set $S(\xi)$ is nonempty, weakly compact in $W_p(0,b)$ and compact in $C(T,H)$.
\end{theorem}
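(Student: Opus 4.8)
The plan is to establish the three assertions in turn, all resting on Proposition~\ref{prop3}, Lemma~\ref{lem5} and the integration‑by‑parts/energy identity of Proposition~\ref{prop2}. Throughout I write $N_F(x)=\{f\in L^2(T,H):f(t)\in F(t,x(t))\ \text{a.e.}\}$ and note that $L^2(T,H)\hookrightarrow L^{p'}(T,X^*)$ continuously, since $p\ge 2$ and $H\hookrightarrow X^*$.

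\textbf{Nonemptiness.} I would fix $y_\xi\in W_p(0,b)$ with $y_\xi(0)=\xi$ (available because the trace $x\mapsto x(0)$ maps $W_p(0,b)$ onto $H$) and let $L_0x=x'$ on $D(L_0)=\{x\in W_p(0,b):x(0)=0\}$, which is a linear maximal monotone operator on the strictly convex (after the renorming already performed) reflexive space $V=L^p(T,X)$. Setting $\widetilde A(v)=y'_\xi+a(y_\xi+v)+N_F(y_\xi+v)$, a function $x$ solves (\ref{eq5}) if and only if $x=y_\xi+v$ with $L_0v+\widetilde A(v)\ni 0$. I would then verify the hypotheses of Proposition~\ref{prop3}: $\widetilde A$ has $P_{wkc}(L^{p'}(T,X^*))$–values (the $a$–part by the remark after (\ref{eq6}); the $N_F$–part is nonempty and bounded in $L^2(T,H)$ by $H(F)_1(iii)$, convex since $F$ is convex–valued, and closed in $L^2(T,H)$ by $H(F)_1(ii)$, hence weakly compact there and so in $L^{p'}(T,X^*)$); it is bounded; it is $L_0$–pseudomonotone — after peeling off the fixed summand $y'_\xi$ and the translation, the $a$–part is exactly Lemma~\ref{lem5}, while for the $N_F$–part one invokes the compact embedding (\ref{eq4}), so that along a test sequence $x_n\to x$ strongly in $L^2(T,H)$, which kills the cross terms $((f_n,x_n-x))$ and lets one place the weak $L^2(T,H)$–limit of $f_n$ inside $N_F(x)$ via the standard convergence theorem for multifunctions and $H(F)_1(ii)$; and it is strongly coercive — from $H(A)_1(iv)$, the $\|\cdot\|^p$ term dominating the $F$– and $y_\xi$–contributions when $p>2$, and, when $p=2$, these being absorbed using $\beta^2c_3<c_2$ from $H(F)_1(iii)$. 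Proposition~\ref{prop3} then gives $0\in R(L_0+\widetilde A)$, so $S(\xi)\ne\emptyset$.

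\textbf{A priori bounds.} For $x\in S(\xi)$, $-x'=h^*+f$ with $h^*\in a(x)$, $f\in N_F(x)$, Proposition~\ref{prop2} yields $\tfrac12|x(t)|^2-\tfrac12|\xi|^2=-\int_0^t\langle h^*(s),x(s)\rangle ds-\int_0^t(f(s),x(s))ds$. Inserting $H(A)_1(iv)$ and $H(F)_1(iii)$, then using Young's inequality (note $a_3\in L^2(T)\subseteq L^{p'}(T)$) when $p>2$, respectively the smallness $\beta^2c_3<c_2$ together with Gronwall's inequality when $p=2$, bounds $S(\xi)$ in $C(T,H)$ and in $L^p(T,X)$. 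Then $H(A)_1(iii)$ and $H(F)_1(iii)$ bound $h^*$ in $L^{p'}(T,X^*)$ and $f$ in $L^2(T,H)$, so $x'=-(h^*+f)$ is bounded in $L^{p'}(T,X^*)$ and $S(\xi)$ is bounded in $W_p(0,b)$.

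\textbf{Weak compactness in $W_p(0,b)$ and compactness in $C(T,H)$.} Since $W_p(0,b)$ is reflexive it suffices to show $S(\xi)$ is weakly sequentially closed. Let $x_n\in S(\xi)$, $x_n\stackrel{w}{\rightarrow}x$ in $W_p(0,b)$, $-x'_n=h^*_n+f_n$; passing to a subsequence, $h^*_n\stackrel{w}{\rightarrow}h^*$ in $L^{p'}(T,X^*)$, $f_n\stackrel{w}{\rightarrow}f$ in $L^2(T,H)$, and by (\ref{eq4}) $x_n\to x$ in $L^2(T,H)$ with $x_n(t)\to x(t)$ in $H$ a.e., while $x_n(0)=\xi$ forces $x(0)=\xi$. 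The convergence theorem for multifunctions and $H(F)_1(ii)$ give $f\in N_F(x)$. For $h^*\in a(x)$ one checks $\limsup_n((h^*_n,x_n-x))\le 0$: writing $((h^*_n,x_n-x))=-((x'_n,x_n-x))-((f_n,x_n-x))$, the last term tends to $0$, $((x'_n,x))\to((x',x))$, and $((x'_n,x_n))=\tfrac12|x_n(b)|^2-\tfrac12|\xi|^2$ has $\liminf_n\ge\tfrac12|x(b)|^2-\tfrac12|\xi|^2=((x',x))$ because $x_n(b)\stackrel{w}{\rightarrow}x(b)$ in $H$; so Lemma~\ref{lem5} applies and $x\in S(\xi)$, whence $S(\xi)$ is weakly compact in $W_p(0,b)$. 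For the $C(T,H)$ statement, apply Proposition~\ref{prop2} to $y_n=x_n-x$ (so $y_n(0)=0$, $-y'_n=(h^*_n-h^*)+(f_n-f)$): $\tfrac12|y_n(t)|^2=-\int_0^t\vartheta_n(s)ds+\int_0^t\langle h^*(s),y_n(s)\rangle ds-\int_0^t(f_n-f,y_n)ds$, where $\vartheta_n(s)=\langle h^*_n(s),x_n(s)-x(s)\rangle$. The last integral is $O(\|x_n-x\|_{L^2(T,H)})\to 0$ uniformly in $t$; the middle one $\to 0$ uniformly in $t$ since $\{t\mapsto\int_0^t\langle h^*,y_n\rangle\}_n$ is equicontinuous and tends to $0$ pointwise; and, the hypotheses of Lemma~\ref{lem5} being in force, re‑running the part of its proof leading to (\ref{eq17}) gives $\vartheta_n\to 0$ in $L^1(T)$, so $\sup_t|\int_0^t\vartheta_n|\to 0$. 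Hence $\sup_{t\in T}|x_n(t)-x(t)|\to 0$, proving compactness of $S(\xi)$ in $C(T,H)$.

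\textbf{Main obstacle.} The delicate point is the last one: $W_p(0,b)$ does not embed compactly into $C(T,H)$, so uniform convergence has to be squeezed out of the energy identity, and the only term not immediately controlled, $\int_0^t\langle h^*_n,x_n-x\rangle ds$, is tamed by the $L^1(T)$–convergence of $\vartheta_n$ implicit in the proof of Lemma~\ref{lem5} (one re‑runs that step here). A close second is checking that $\widetilde A$ is $L_0$–pseudomonotone and strongly coercive so that Proposition~\ref{prop3} applies — in particular, in the borderline case $p=2$, ensuring the $F$–term does not destroy coercivity, which is exactly the purpose of the condition $\beta^2c_3<c_2$ in $H(F)_1(iii)$.
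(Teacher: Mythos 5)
Your overall architecture (Proposition~\ref{prop3} for existence, Lemma~\ref{lem5} plus the energy identity of Proposition~\ref{prop2} for the limit procedures) is the same as the paper's, and most of your steps are correct; in particular your treatment of weak closedness of $S(\xi)$ and of the $C(T,H)$-compactness is a legitimate streamlining of the paper's argument (you avoid the radial retraction/truncation $\hat F$ and the auxiliary solution map on the set $C$, and you control $\int_0^t\langle h^*,x_n-x\rangle\,ds$ by equicontinuity plus pointwise convergence, which is fine). The genuine gap is in the nonemptiness step: you dispose of a general initial condition $\xi\in H$ by picking $y_\xi\in W_p(0,b)$ with $y_\xi(0)=\xi$, ``available because the trace $x\mapsto x(0)$ maps $W_p(0,b)$ onto $H$.'' Nothing in the paper gives this: the embedding \eqref{eq3} only says the trace lands in $H$, and Proposition~\ref{prop3} only yields surjectivity of $L+A$ with $D(L)=\{y\in W_p(0,b):y(0)=0\}$, i.e.\ zero initial data. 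When $\xi\in X$ the constant function works (this is exactly the paper's first step), but for $\xi\in H\setminus X$ the constant function is not in $L^p(T,X)$, and the existence of \emph{some} lifting in $W_p(0,b)$ is a nontrivial fact: it is essentially equivalent to solving an evolution equation with $H$-data (e.g.\ $u'+\|u\|^{p-2}\mathcal{F}(u)=0$, $u(0)=\xi$, $\mathcal{F}$ the duality map), i.e.\ to the very kind of statement being proved. So as written your reduction is circular-in-spirit: the difficulty you wave away is precisely what the paper's second step handles, by taking $\xi_n\in X$, $\xi_n\to\xi$ in $H$, solving with regular data, and passing to the limit using the a priori bounds, \eqref{eq4}, Lemma~\ref{lem5} and the convergence theorem for the $F$-term.

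The gap is repairable in either of two ways: (i) supply a precise reference or proof for the solvability in $W_p(0,b)$ of the model single-valued monotone problem with arbitrary $\xi\in H$ (Lions/Zeidler-type theorems do allow $H$-data, but they are proved by Galerkin or by exactly the density argument above, and they are not among the tools the paper records), and then your one-shot translation argument goes through, provided you also check that the translated data $A(t,\cdot+y_\xi(t))$, $F(t,\cdot+y_\xi(t))$ still satisfy $H(A)_1$/$H(F)_1$ with $L^{p'}$, $L^1$, $L^2$ majorants (true, since $y_\xi\in L^p(T,X)\cap C(T,H)$, and for $p=2$ the coercivity constant only degrades to $(1-\delta)c_2$, which still beats $\beta^2c_3$); or (ii) revert to the paper's two-stage scheme: translation for $\xi\in X$, then approximation $\xi_n\in X\to\xi$ in $H$ with the limit passage you have already written out for weak closedness (the only change being $x_n(0)=\xi_n$, handled as in \eqref{eq29}). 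With either repair, the remainder of your proposal stands.
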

\begin{proof}
	First suppose that $\xi\in X$. We define
	$$A_1(t,x)=A(t,x+\xi)\ \mbox{and}\ F_1(t,x)=F(t,x+\xi).$$
	
	Evidently, $A_1(t,x)$ and $F_1(t,x)$ have the same measurability, continuity and growth properties as the multivalued maps $A(t,x)$ and $F(t,x)$. So, we may equivalently consider the following Cauchy problem
	\begin{equation}\label{eq18}
		\left\{\begin{array}{l}
			-x'(t)\in A_1(t,x(t))+F_1(t,x(t))\ \mbox{for almost all}\ t\in T,\\
			x(0)=0.
		\end{array}\right\}
	\end{equation}
	
	Note that if $x\in W_p(0,b)$ is a solution of (\ref{eq18}), then $\hat{x}=x-\xi$ is a solution of (\ref{eq5}) (when $\xi\in X$, that is, the initial condition is regular). Consider the linear densely defined operator $L:D(L)\subseteq L^p(T,X)\rightarrow L^{p'}(T,X^*)$ defined by
	$$L(x)=x'\ \mbox{for all}\ x\in W^0_p(0,b)=\{y\in W_p(0,b):y(0)=0\}$$
	(the evaluation $y(0)=0$, makes sense by virtue of (\ref{eq3})).
	
	Consider the multivalued maps $a_1,G_1:L^p(T,X)\rightarrow 2^{L^{p'}(T,X^*)}\backslash\{\emptyset\}$ defined by
	\begin{eqnarray*}
		&&a_1(x)=\{h^*\in L^{p'}(T,X^*):h^*(t)\in A_1(t,x(t))\ \mbox{for almost all}\ t\in T\},\\
		&&G_1(x)=\{f\in L^{p'}(T,X^*):f(t)\in F_1(t,x(t))\ \mbox{for almost all}\ t\in T\}.
	\end{eqnarray*}
	
	We set $K(x)=a_1(x)+G_1(x)$ for all $x\in L^p(T,X)$. Then
	$$K:L^p(T,X)\rightarrow 2^{L^{p'}(T,X^*)}\backslash\{\emptyset\}.$$
	\begin{claim}\label{cl1}
		$K$ is $L$-pseudomonotone.
	\end{claim}
	
	Clearly, $K$ has values in $P_{wkc}(L^{p'}(T,X^*))$ and it is bounded (see hypotheses $H(A)(iii)$, $H(F)(iii)$).
	
	Next, we consider a sequence $\{x_n\}_{n\geq 1}\subseteq D(L)$ such that
	\begin{eqnarray}\label{eq19}
		\left\{\begin{array}{l}
			x_n\stackrel{w}{\rightarrow}x\in D(L)\ \mbox{in}\ L^p(T,X),L(x_n)\rightarrow L(x)\ \mbox{in}\ L^{p'}(T,X^*),\\
			k^*_n\in K(x_n),k^*_n\stackrel{w}{\rightarrow}k^*\ \mbox{in}\ L^{p'}(T,X^*)\ \mbox{and}\ \limsup\limits_{n\rightarrow\infty}((k^*_n,x_n-x))\leq 0.
		\end{array}\right\}
	\end{eqnarray}
	
	Then we have
	$$k^*_n=h^*_n+f_n\ \mbox{with}\ h^*_n\in a_1(x_n),f_n\in G_1(x_n)\ \mbox{for all}\ n\in\NN.$$
	
	Hypotheses $H(A)(iii)$ and $H(F)(iii)$ imply that
	$$\{h^*_n\}_{n\geq 1}\subseteq L^{p'}(T,X^*)\ \mbox{and}\ \{f_n\}_{n\geq 1}\subseteq L^{p'}(T,H)\ \mbox{is bounded}.$$
	
	So, we may assume (at least for a subsequence) that
	$$h^*_n\stackrel{w}{\rightarrow}h^*\ \mbox{in}\ L^{p'}(T,X^*)\ \mbox{and}\ f_n\stackrel{w}{\rightarrow}f\ \mbox{in}\ L^{p'}(T,H).$$
	
	By (\ref{eq19}) we have
	\begin{eqnarray}\label{eq20}
		&&x_n\stackrel{w}{\rightarrow}x\ \mbox{in}\ W_p(0,b),\nonumber\\
		&\Rightarrow&x_n\rightarrow x\ \mbox{in}\ L^p(T,H)\ (\mbox{see (\ref{eq4})}),\\
		&\Rightarrow&((f_n,x_n-x))=\int^b_0(f_n(t),x_n(t)-x(t))dt\rightarrow 0,\nonumber\\
		&\Rightarrow&\limsup\limits_{n\rightarrow\infty}((h^*_n,x_n-x))\leq 0\ (\mbox{see (\ref{eq19})}),\nonumber\\
		&\Rightarrow&h^*\in a_1(x)\ \mbox{and}\ ((h^*_n,x_n))\rightarrow((h^*,x))\ (\mbox{see Lemma \ref{lem5}})\nonumber.
	\end{eqnarray}
	
	Recall that
	\begin{equation}\label{eq21}
		f_n(t)\in F(t,x_n(t))\ \mbox{for almost all}\ t\in T,\ \mbox{all}\ n\in\NN.
	\end{equation}
	
	By (\ref{eq19}), (\ref{eq20}), (\ref{eq21}) and Proposition 6.6.33 on p. 521 of Papageorgiou and Kyritsi \cite{34}, we have
	\begin{eqnarray*}
		&f(t)&\in\overline{\rm conv}\, w-\limsup\limits_{n\rightarrow\infty}F(t,x_n(t))\\
		&&\subseteq F(t,x(t))\ \mbox{for almost all}\ t\in T\ (\mbox{see hypothesis}\ H(F)(ii)),\\
		\Rightarrow&&f\in G_1(x).
	\end{eqnarray*}
	
	Since $((f_n,x_n-x))=\int^b_0(f_n(t),x_n(t)-x(t))dt\rightarrow 0$, we conclude that $K$ is $L$-pseudomonotone. This proves Claim \ref{cl1}.
	\begin{claim}\label{cl2}
		$K$ is coercive.
	\end{claim}
	
	Let $x\in L^p(T,X)$ and $k^*\in K(x)$. Then
	$$k^*=h^*+f\ \mbox{with}\ h^*\in a_1(x),f\in G_1(x).$$
	
	We have
	\begin{eqnarray}\label{eq22}
		((k^*,x))&=&((h^*,x))+\int^b_0(f(t),x(t))dt\nonumber\\
		&\geq&c_2||x||^p_{L^p(T,X)}-||a_2||_1-\int^b_0|f(t)||x(t)|dt\ (\mbox{see hypothesis}\ H(A)_1(iv)).
	\end{eqnarray}
	
	Note that
	\begin{eqnarray}\label{eq23}
		\int^b_0|f(t)||x(t)|dt&\leq&\int^b_0(a_3(t)|x(t)|+c_3|x(t)|^2)dt\ (\mbox{see hypothesis}\ H(F)_1(iii))\nonumber\\
		&\leq&\int^b_0(c(\epsilon)a_3(t)^2+(c_3+\epsilon)|x(t)|^2)dt\\
		&&(\mbox{using Young's inequality with}\ \epsilon>0).\nonumber
	\end{eqnarray}
	
	Returning to (\ref{eq22}) and using (\ref{eq23}) we see that
	\begin{eqnarray}\label{eq24}
		&&((k^*,x))\geq c_2||x||^p_{L^p(T,X)}-c_4||x||^2_{L^p(T,X)}-c_5\ \mbox{for some}\ c_4,c_5>0\\
		&&(\mbox{recall}\ 2\leq p\ \mbox{and in case}\ p=2,\ \mbox{choose}\ \epsilon>0\ \mbox{small so that}\ c_4<c_2,\nonumber\\
		&&\mbox{see hypothesis}\ H(F)_1(iii)).\nonumber
	\end{eqnarray}
	
	It follows from (\ref{eq24}) that $K$ is coercive. This proves Claim \ref{cl2}.
	
	Now Claims \ref{cl1} and \ref{cl2} permit the use of Proposition \ref{prop3} to find $x\in W_p(0,b)$ solving problem (\ref{eq5}) when $\xi\in X$.
	
	Next, we remove the restriction $\xi\in X$. So, suppose $\xi\in H$. We can find $\{\xi_n\}_{n\geq 1}\subseteq X$ such that $\xi_n\rightarrow\xi$ in $H$ (recall that $X$ is dense in $H$). From the first part of the proof, we know that we can find $x_n\in S(\xi_n)\subseteq W_p(0,b)$ for all $n\in\NN$. We have
	$$\left\{\begin{array}{l}
		-x'_n(t)\in A(t,x_n(t))+F(t,x_n(t))\ \mbox{for almost all}\ t\in T,\\
		x_n(0)=\xi_n,\ n\in\NN .
	\end{array}\right\}$$
	
	It follows that
	\begin{eqnarray}\label{eq25}
		&&-x'_n=h^*_n+f_n\ \mbox{with}\ h^*_n(t)\in A(t,x_n(t)),f_n(t)\in F(t,x_n(t))\\
		&&\hspace{5cm}\mbox{for almost all}\ t\in T,\ \mbox{all}\ n\in\NN.\nonumber
	\end{eqnarray}
	
	We have
	\begin{eqnarray}\label{eq26}
		&&((x'_n,x_n))+((h^*_n,x_n))\leq\int^b_0|f_n(t)||x_n(t)|dt\nonumber\\
		&\Rightarrow&\frac{1}{2}|x_n(b)|^2+c_2||x_n||^p_{L^p(T,X)}\leq c_6+c_7||x_n||^2_{L^p(T,X)}\ \mbox{for some}\ c_6,c_7>0\\
		&&(\mbox{see hypotheses}\ H(A)_1(iv),H(F)_1(iii)\ \mbox{and if}\ p=2\ \mbox{as before we have}\ c_7<c_2).\nonumber
	\end{eqnarray}
	
	From (\ref{eq25}), (\ref{eq26}) and hypotheses $H(A)_1(iii),H(F)_1(iii)$ it follows that
	$$\{x_n\}_{n\geq 1}\subseteq W_p(0,b)\ \mbox{is bounded}.$$
	
	So, we may assume that
	\begin{eqnarray}\label{eq27}
		x_n\stackrel{w}{\rightarrow}x\ \mbox{in}\ W_p(0,b)\ \mbox{and}\ x_n\rightarrow x\ \mbox{in}\ L^p(T,H)\ \mbox{as}\ n\rightarrow\infty\ (\mbox{see (\ref{eq4})}).
	\end{eqnarray}
	
	By (\ref{eq25}) we have for all $n\in\NN$
	\begin{equation}\label{eq28}
		((x'_n,x_n-x))+((h^*_n,x_n-x))=-((f_n,x_n-x))=-\int^b_0(f_n(t),x_n(t)-x(t))dt\,.
	\end{equation}
	
	By Proposition \ref{prop2} we know that
	\begin{eqnarray}\label{eq29}
		&&((x'_n-x',x_n-x))=\frac{1}{2}|x_n(b)-x(b)|^2-\frac{1}{2}|\xi_n-\xi|^2,\nonumber\\
		&\Rightarrow&((x'_n,x-x_n))\leq\frac{1}{2}|\xi_n-\xi|^2+((x',x-x_n)),\nonumber\\
		&\Rightarrow&\limsup\limits_{n\rightarrow\infty}((x'_n,x-x_n))\leq 0\ (\mbox{see (\ref{eq27}) and recall}\ \xi_n\rightarrow\xi\ \mbox{in}\ H).
	\end{eqnarray}
	
	Hypothesis $H(F)_1(iii)$ implies that $\{f_n\}_{n\geq 1}\subseteq L^2(T,H)$ is bounded. Hence
	\begin{eqnarray}\label{eq30}
		&&\int^b_0(f_n(t),x(t)-x_n(t))dt\rightarrow 0\ (\mbox{see (\ref{eq27})}),\nonumber\\
		&\Rightarrow&\limsup\limits_{n\rightarrow\infty}\left[((x'_n,x-x_n))+((f_n,x-x_n))\right]\leq 0\ (\mbox{see (\ref{eq29})})\nonumber\\
		&\Rightarrow&\limsup\limits_{n\rightarrow\infty}((h^*_n,x_n-x))\leq 0\ (\mbox{see (\ref{eq28})}).
	\end{eqnarray}
	
	By hypothesis $H(A)_1(iii)$ we see that
	$$\{h^*_n\}_{n\geq 1}\subseteq L^{p'}(T,X^*)\ \mbox{is bounded}.$$
	
	So, we may assume that
	\begin{equation}\label{eq31}
		h^*_n\stackrel{w}{\rightarrow}h^*\ \mbox{in}\ L^{p'}(T,X^*)\ \mbox{as}\ n\rightarrow\infty .
	\end{equation}
	
	From (\ref{eq27}), (\ref{eq30}), (\ref{eq31}), we see that we can use Lemma \ref{lem5} and infer that
	\begin{equation}\label{eq32}
		h^*(t)\in A(t,x(t))\ \mbox{for almost all}\ t\in T.
	\end{equation}
	
	As we have already mentioned $\{f_n\}_{n\geq 1}\subseteq L^2(T,H)$ is bounded and so we may assume that
	\begin{equation}\label{eq33}
		f_n\stackrel{w}{\rightarrow}f\ \mbox{in}\ L^2(T,H).
	\end{equation}
	
	Using Proposition 6.6.33 on p. 521 of Papageorgiou and Kyritsi \cite{34}, we have
	\begin{eqnarray}\label{eq34}
		f(t)&\in&\overline{\rm conv}\, w-\limsup\limits_{n\rightarrow\infty}F(t,x_n(t))\nonumber\\
		&\subseteq&F(t,x(t))\ \mbox{for almost all}\ t\in T\ (\mbox{see hypothesis}\ H(F)_1(ii)).
	\end{eqnarray}
	
	In (\ref{eq25}) we pass to the limit as $n\rightarrow\infty$ and use (\ref{eq27}), (\ref{eq31}), (\ref{eq33}) to obtain
	\begin{eqnarray*}
		&&-x'=h^*+f\ \mbox{with}\ h^*\in a(x)\ \mbox{(see (\ref{eq32}))},\ f\in G(x)\ (\mbox{see (\ref{eq34})}),\ x(0)=\xi,\\
		&\Rightarrow&x\in S(\xi).
	\end{eqnarray*}
	
	So, we have proved that when $\xi\in H$, the solution set $S(\xi)$ is a nonempty subset of $W_p(0,b)$.
	
	Next, we will prove the compactness of $S(\xi)$ in $W_p(0,b)_w$ and in $C(T,H)$. Let $x\in S(\xi)$. For every $t\in T$ we have
	\begin{eqnarray}\label{eq35}
		&&\int^t_0\left\langle x'(s),x(s)\right\rangle ds+\int^t_0\left\langle h^*(s),x(s)\right\rangle ds\leq\int^t_0|f(s)||x(s)|ds\ \mbox{with}\ h^*\in a(x),\nonumber\\
		&\Rightarrow&\frac{1}{2}|x(t)|^2\leq\frac{1}{2}c_8^2+c_9\int^t_0|x(s)|^2ds\ \mbox{for some}\ c_8,c_9>0,\nonumber\\
		&\Rightarrow&|x(t)|\leq M\ \mbox{for some}\ M>0,\ \mbox{all}\ t\in T,\ \mbox{all}\ x\in S(\xi)\ (\mbox{by Gronwall's inequality}).
	\end{eqnarray}
	
	Then let $r_M:H\rightarrow H$ be the $M$-radial retraction defined by
	$$r_M(x)=\left\{\begin{array}{ll}
		x&\mbox{if}\ |x|\leq M\\
		M\frac{x}{|x|}&\mbox{if}\ M<|x|.
	\end{array}\right.$$
	
	Because of the {\it a priori} bound (\ref{eq35}), we can replace $F(t,x)$ by
	$$\hat{F}(t,x)=F(t,r_M(x)).$$
	
	Note that for all $x\in H,\ t\mapsto\hat{F}(t,x)$ is graph measurable (hence also measurable, see Section 2) and for almost all $t\in T$, $x\mapsto\hat{F}(t,x)$ has a graph which is sequentially closed in $H\times H_w$. Moreover, we see that
	$$|\hat{F}(t,x)|\leq a_4(t)\ \mbox{for almost all}\ t\in T,\ \mbox{all}\ x\in H\ \mbox{with}\ a_4\in L^2(T).$$
	
	We introduce the set
	$$C=\{f\in L^2(T,H):|f(t)|\leq a_4(t)\ \mbox{for almost all}\ t\in T\}.$$
	
	We consider the following Cauchy problem
	\begin{equation}\label{eq36}
		\left\{\begin{array}{l}
			-x'(t)\in A(t,x(t))+f(t)\ \mbox{for almost all}\ t\in T=[0,b],\\
			x(0)=\xi\in H,\ f\in C.
		\end{array}\right\}
	\end{equation}
	
	Let $H:C\rightarrow 2^{C(T,H)}$ be the map (in general, multivalued) that assigns to each $f\in C$  the set of solutions of problem (\ref{eq36}). It is a consequence of Proposition \ref{prop3} and Lemma \ref{lem5}, that $H(\cdot)$ has nonempty values.
	\begin{claim}\label{cl3}
		$H(C)\subseteq C(T,H)$ is compact.
	\end{claim}
	
	Let $\{x_n\}_{n\geq 1}\subseteq H(C)$. Then
	\begin{equation}\label{eq37}
		-x'_n=h^*_n+f_n\ \mbox{with}\ h^*_n\in a(x_n),f_n\in C\ \mbox{for all}\ n\in \NN.
	\end{equation}
	
	Hence for all $t\in T$, we have
	\begin{eqnarray}\label{eq38}
		&&\frac{1}{2}|x_n(t)|^2\leq\frac{1}{2}c^2_{10}+\int^t_0a_4(s)|x_n(s)|ds\ \mbox{for some}\ c_{10}>0,\ \mbox{all}\ n\in\NN\nonumber\\
		&\Rightarrow&|x_n(t)|\leq M_1\ \mbox{for some}\ M_1>0,\ \mbox{all}\ t\in T,\ \mbox{all}\ n\in\NN\\
		&&\hspace{1cm}(\mbox{see Brezis \cite[Lemme A.5, p. 157]{6}}).\nonumber
	\end{eqnarray}
	
	Also, using hypothesis $H(A)_1(iv)$ we have
	\begin{eqnarray}\label{eq39}
		c_2||x_n||^p_{L^p(T,X)}\leq c_{11}+\int^b_0a_4(t)|x_n(t)|dt\leq c_{12}\ \mbox{for all}\ n\in\NN\ (\mbox{see (\ref{eq38})}).
	\end{eqnarray}
	
	It follows from (\ref{eq37}) and (\ref{eq39}) that
	$$\{x_n\}_{n\geq 1}\subseteq W_p(0,b)\ \mbox{is bounded}.$$
	
	So, we may assume that
	\begin{equation}\label{eq40}
		x_n\stackrel{w}{\rightarrow}x\ \mbox{in}\ W_p(0,b),\ h^*_n\stackrel{w}{\rightarrow}h^*\ \mbox{in}\ L^{p'}(T,X^*),\ f_n\stackrel{w}{\rightarrow}f\ \mbox{in}\ L^2(T,H).
	\end{equation}
	
	Passing to the limit as $n\rightarrow\infty$ in (\ref{eq37}) and using (\ref{eq40}), we obtain
	$$-x'=h^*+f.$$
	
	Also, from (\ref{eq37}) we have
	\begin{equation}\label{eq41}
		((h^*_n,x_n-x))+((x'_n,x_n-x))=-\int^b_0(f_n(t),x_n(t)-x(t))dt.
	\end{equation}
	
	Note that
	\begin{eqnarray}\label{eq42}
		\int^b_0(f_n(t),x_n(t)-x(t))dt\rightarrow 0\ (\mbox{see (\ref{eq40}) and (\ref{eq4})}).
	\end{eqnarray}
	
	Also using Proposition \ref{prop2}, we have
	\begin{eqnarray}\label{eq43}
		&&((x'_n-x',x_n-x))=\frac{1}{2}|x_n(b)-x(b)|^2\geq 0\ \mbox{for all}\ n\in\NN\nonumber\\
		&&\hspace{1cm}(\mbox{recall that}\ x_n(0)=x(0)=\xi\ \mbox{for all}\ n\in\NN)\nonumber\\
		&\Rightarrow&((x',x_n-x))\leq((x'_n,x_n-x))\ \mbox{for all}\ n\in\NN .
	\end{eqnarray}
	
	It follows from  (\ref{eq40}) that
	\begin{equation}\label{eq44}
		((x',x_n-x))\rightarrow 0.
	\end{equation}
	
	Returning to (\ref{eq41}), passing to the limit as $n\rightarrow\infty$ and using (\ref{eq42}), (\ref{eq43}), (\ref{eq44}) we obtain
	\begin{eqnarray*}
		&&\limsup\limits_{n\rightarrow\infty}((h^*_n,x_n-x))\leq 0,\\
		&\Rightarrow&h^*\in a(x)\ (\mbox{see Lemma \ref{lem5} and (\ref{eq40})}),\\
		&\Rightarrow&H(C)\ \mbox{is}\ w-\mbox{compact in}\ W_p(0,b).
	\end{eqnarray*}
	
	From the proof of Lemma \ref{lem5} (see (\ref{eq17})), we know that
	\begin{equation}\label{eq45}
		\int^b_0|\left\langle h^*_n(t),x_n(t)-x(t)\right\rangle|dt\rightarrow 0\ \mbox{as}\ n\rightarrow\infty.
	\end{equation}
	
	In a similar fashion we also have
	\begin{equation}\label{eq46}
		\int^b_0|\left\langle h^*(t),x_n(t)-x(t)\right\rangle|dt\rightarrow 0\ \mbox{as}\ n\rightarrow\infty.
	\end{equation}
	
	Also, by (\ref{eq4}), (\ref{eq38}), (\ref{eq40}) and Vitali's theorem, we have
	\begin{equation}\label{eq47}
		\int^b_0|(f_n(t)-f(t),x_n(t)-x(t))|dt\rightarrow 0\ \mbox{as}\ n\rightarrow\infty.
	\end{equation}
	
	For every $t\in T$ and every $n\in\NN$, using Proposition \ref{prop2}, we have
	\begin{eqnarray*}
		&&\frac{1}{2}|x_n(t)-x(t)|^2\leq\int^b_0|\left\langle h^*_n(t)-h^*(t),x_n(t)-x(t)\right\rangle|dt+\\
		&&\hspace{1cm}\int^b_0|(f_n(t)-f(t),x_n(t)-x(t))|dt,\\
		&\Rightarrow&||x_n-x||_{C(T,H)}\rightarrow 0\ \mbox{(see (\ref{eq45}), (\ref{eq46}), (\ref{eq47}))}\\
		&\Rightarrow&H(C)\subseteq C(T,H)\ \mbox{is compact}.
	\end{eqnarray*}
	
	However, from the previous parts of the proof it is clear that $S(\xi)\subseteq H(C)$ is weakly closed in $W_p(0,b)$ and closed in $C(T,H)$. Therefore we conclude that $S(\xi)$ is weakly compact in $W_p(0,b)$ and compact in $C(T,H)$.
\end{proof}

Next, we want to produce a continuous selection of the multifunction $\xi\mapsto S(\xi)$ (we refer to Repov\v{s} and Semenov \cite{repsem} for more details about continuous selections of multivalued mappings). Note that $S(\cdot)$ is in general not convex-valued and so the Michael selection theorem (see Hu and Papageorgiou \cite[Theorem 4.6, p. 92]{23}) cannot be used. To produce a continuous selection of the solution multifunction $\xi\mapsto S(\xi)$, we need to strengthen the conditions on the multimap $A(t,\cdot)$, in order to guarantee that certain Cauchy problems admit a unique solution.

The new hypotheses on the map $A(t,x)$ are the following:

\smallskip
$H(A)_2:$ $A:T\times X\rightarrow 2^{X^*}\backslash\{\emptyset\}$ is a multivalued map such that
\begin{itemize}
	\item[(i)] for every $x\in X,\ t\mapsto A(t,x)$ is graph measurable;
	\item[(ii)] for almost all $t\in T,\ x\mapsto A(t,x)$ is maximal monotone;
	\item[(iii)] for almost all $t\in T$, all $x\in X$ and all $h^*\in A(t,x)$, we have
	$$||h^*||_*\leq a_1(t)+c_1||x||^{p-1}$$
	with $a_1\in L^{p'}(T),c_1>0,2\leq p$;
	\item[(iv)] for almost all $t\in T$, all $x\in X$ and $h^*\in A(t,x)$, we have
	$$\left\langle h^*,x\right\rangle\geq c_2||x||^p-a_2(t),$$
	with $c_2>0,a_2\in L^1(T)_+$.
\end{itemize}
\begin{remark}
	As we have already mentioned in an earlier remark, since now for almost all $t\in T$, $A(t,\cdot)$ is maximal monotone, we do not need the condition on the graph of $A(t,\cdot)$ (see hypothesis $H(A)_1(ii)$ and Barbu \cite[Lemma 1.3, p. 42]{4}).
\end{remark}

	Also, we strengthen the condition on the multifunction $F(t,\cdot)$.

\smallskip
$H(F)_2:$ $F:T\times H\rightarrow P_{f_c}(H)$ is a multifunction such that
\begin{itemize}
	\item[(i)] for every $x\in H$, $t\mapsto F(t,x)$ is graph measurable;
	\item[(ii)] for almost all $t\in T$ and all $x,y\in H$ we have
	$$h(F(t,x),F(t,y))\leq k(t)|x-y|,$$
	with $k\in L^1(T)_+$;
	\item[(iii)] for almost all $t\in T$, all $x\in H$ and all $h\in F(t,x)$, we have
	$$|h|\leq a_3(t)+c_3|x|$$
	with $a_3\in L^2(T)_+,c_3>0$ and if $p=2$, then $\beta^2c_3<c_2$ (see (\ref{eq2})).
\end{itemize}
\begin{remark}
	Hypothesis $H(F)_2(ii)$ is stronger than condition $H(F)_1(ii)$. Indeed, suppose that $H(F)_2(ii)$ holds and we have
	\begin{equation}\label{eq48}
		x_n\rightarrow x,\ h_n\stackrel{w}{\rightarrow}h\ \mbox{in}\ H\ \mbox{and}\ h_n\in F(t,x_n)\ \mbox{for all}\ n\in\NN.
	\end{equation}
	
	By the definition of the Hausdorff metric (see Section 2), we have
	\begin{eqnarray*}
		&&d(h_n,F(t,x))\leq d(h_n,F(t,x_n))+h(F(t,x_n),F(t,x))=h(F(t,x_n),F(t,x)),\\
		&\Rightarrow&d(h_n,F(t,x))\rightarrow 0\ \mbox{as}\ n\rightarrow\infty\ (\mbox{see (\ref{eq48}) and hypothesis}\ H(F)_2(ii))
	\end{eqnarray*}
	
	The function $y\mapsto d(y,F(t,x))$ is continuous and convex, hence weakly lower semicontinuous. Therefore by (\ref{eq48}) we have
	\begin{eqnarray*}
		&&d(h,F(t,x))\leq\liminf\limits_{n\rightarrow\infty}d(h_n,F(t,x))=0\\
		&\Rightarrow&h\in F(t,x).
	\end{eqnarray*}

This proves that condition $H(F)_1(ii)$ holds.
\end{remark}

So, we can use Theorem \ref{th6} and establish that given any $\xi\in H$, the solution set $S(\xi)$ is nonempty, weakly compact in $W_p(0,b)$ and compact in $C(T,H)$. The next result extends an earlier result  of Cellina and Ornelas \cite{9} for differential inclusions in $\RR^N$ with $A\equiv 0$.
\begin{prop}\label{prop7}
	If hypotheses $H(A)_2,H(F)_2$ hold, then there exists a continuous map $\vartheta:H\rightarrow C(T,H)$ such that
	$$\vartheta(\xi)\in S(\xi)\ \mbox{for all}\ \xi\in H.$$
\end{prop}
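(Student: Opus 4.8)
The plan is to construct the selection $\vartheta$ via the standard Cellina--Ornelas directionally-continuous / fixed-point scheme, exploiting the Lipschitz hypothesis $H(F)_2(ii)$ to get a \emph{unique} solution of an auxiliary single-valued Cauchy problem, and then upgrading continuity in $\xi$ to continuity of a genuine selection. First I would observe that, since $A(t,\cdot)$ is maximal monotone (hypothesis $H(A)_2(ii)$), the operator $L+a$ (with $L$ the time-derivative on $W^0_p(0,b)$ and $a$ as in \eqref{eq6}) is maximal monotone, so for every $g\in L^2(T,H)$ and every $\xi\in X$ the Cauchy problem $-x'(t)\in A(t,x(t))+g(t)$, $x(0)=\xi$, has a \emph{unique} solution; call it $p(\xi,g)\in W_p(0,b)\hookrightarrow C(T,H)$. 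Using Proposition \ref{prop2} together with the coercivity $H(A)_2(iv)$ and the monotonicity of $A(t,\cdot)$, a Gronwall estimate on $\tfrac12|p(\xi_1,g_1)(t)-p(\xi_2,g_2)(t)|^2$ shows that $p$ is (Lipschitz) continuous from $H\times L^2(T,H)$ into $C(T,H)$; this is the engine of the whole argument and I would record it as a preliminary lemma.

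Next I would set up the fixed-point map. Fix $\xi\in H$. On the closed bounded convex set $C=\{f\in L^2(T,H):|f(t)|\le a_4(t)\ \text{a.e.}\}$ (with $a_4$ coming from the a priori bound \eqref{eq35}--\eqref{eq38} exactly as in the proof of Theorem \ref{th6}, after composing $F$ with the radial retraction $r_M$) consider the map $f\mapsto x=p(\xi,f)$ and then $x\mapsto$ ``the minimal-norm selection'' or rather a measurable selection $g(t)\in F(t,x(t))$ closest to $f(t)$: define $\Phi_\xi(f)$ to be the (pointwise unique, by $H(F)_2$ and convexity of $F(t,\cdot)$) element of $F(t,x(t))$ of minimal distance to $f(t)$. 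By $H(F)_2(ii)$ and the continuity of $p$, $\Phi_\xi$ is a contraction-like / continuous self-map of $C$ in a suitably weighted $L^2$ norm $\|f\|_k^2=\int_0^b e^{-2\int_0^t k(s)ds}|f(t)|^2dt$; more precisely one checks $\|\Phi_\xi(f_1)-\Phi_\xi(f_2)\|_k\le \theta\|f_1-f_2\|_k$ with $\theta<1$, using $|{\rm proj}_{F(t,x_1(t))}f_1(t)-{\rm proj}_{F(t,x_2(t))}f_2(t)|\le |f_1(t)-f_2(t)|+h(F(t,x_1(t)),F(t,x_2(t)))$ and the Gronwall bound $|x_1(t)-x_2(t)|\le\int_0^t\!|f_1(s)-f_2(s)|ds$. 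Banach's fixed point theorem then gives a \emph{unique} $f_\xi\in C$ with $\Phi_\xi(f_\xi)=f_\xi$; setting $\vartheta(\xi)=p(\xi,f_\xi)$ gives an element of $S(\xi)$ because $-\vartheta(\xi)'(t)\in A(t,\vartheta(\xi)(t))+f_\xi(t)$ and $f_\xi(t)=\Phi_\xi(f_\xi)(t)\in F(t,\vartheta(\xi)(t))$ a.e.

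Finally I would prove continuity of $\xi\mapsto\vartheta(\xi)$. Because the fixed point $f_\xi$ is unique and obtained from a contraction whose constant $\theta$ is \emph{uniform in $\xi$}, the standard parameter-dependence statement for Banach fixed points applies: $\xi\mapsto f_\xi$ is continuous from $H$ into $(C,\|\cdot\|_k)$ provided $\xi\mapsto\Phi_\xi(f)$ is continuous for fixed $f$, which follows from the continuity of $p(\cdot,f)$ and the fact that the metric projection onto the moving closed convex sets $F(t,x(t))$ depends continuously on $x(t)$ (again $H(F)_2(ii)$). Composing with the continuous map $p$ yields that $\vartheta=p(\cdot,f_{(\cdot)}):H\to C(T,H)$ is continuous, and $\vartheta(\xi)\in S(\xi)$ for all $\xi\in H$ by construction, completing the proof.

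\textbf{Main obstacle.} The delicate point is getting a contraction constant $\theta<1$ that does \emph{not} depend on $\xi$ (and survives the presence of the unbounded maximal-monotone part $A$): one must choose the weight $e^{-2\int_0^t k(s)ds}$ and possibly absorb an extra factor coming from the growth $c_3|x|$ in $H(F)_2(iii)$ (this is exactly where the hypothesis $\beta^2 c_3<c_2$ when $p=2$ is used, to keep the a priori bound $M$ and hence $a_4$ finite and $\xi$-locally uniform). A secondary technical nuisance is the measurability of $t\mapsto{\rm proj}_{F(t,x(t))}f(t)$, which is handled by the Yankov--von Neumann--Aumann selection theorem applied to the graph-measurable multifunction $(t)\mapsto F(t,x(t))$ together with continuity of the metric projection in a Hilbert space, so the selection $\Phi_\xi(f)$ is indeed a well-defined element of $L^2(T,H)$.
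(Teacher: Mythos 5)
Your preliminary reductions (uniqueness of the solution of $-x'\in A(t,x)+g$, $x(0)=\xi$, and the Gronwall-type estimate showing that $(\xi,g)\mapsto p(\xi,g)$ is nonexpansive from $H\times L^1(T,H)$ into $C(T,H)$) are correct and are also the engine of the paper's argument. The fatal step is the contraction estimate for $\Phi_\xi$, which rests on the inequality $|{\rm proj}_{F(t,x_1(t))}f_1(t)-{\rm proj}_{F(t,x_2(t))}f_2(t)|\leq|f_1(t)-f_2(t)|+h(F(t,x_1(t)),F(t,x_2(t)))$. This inequality is false: the metric projection onto a moving closed convex set is only H\"older of order $1/2$ with respect to the Hausdorff distance, with a constant involving the distance of the point to the sets. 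Concretely, in $\RR^2$ take $C_1=[0,L]\times\{0\}$, $C_2=\{t(\cos\theta,\sin\theta):t\in[0,L]\}$ and the single point $u=(L/2,-R)$ with $R\sin\theta>L/2$; then $h(C_1,C_2)\leq L\theta$, yet ${\rm proj}_{C_1}(u)=(L/2,0)$ while ${\rm proj}_{C_2}(u)=(0,0)$, so the projections differ by $L/2$ although $|f_1-f_2|=0$ and $h(C_1,C_2)$ is arbitrarily small. Hence $\Phi_\xi$ need not be Lipschitz, let alone a contraction, in your weighted norm; Banach's theorem does not apply, and the parametric-continuity argument, which needs a contraction constant uniform in $\xi$, collapses with it. The defect cannot be patched by choosing a cleverer single-valued ``nearest-point type'' selection: in an infinite-dimensional Hilbert space $H$ there is no Lipschitz (with respect to $h$) selection of $P_{f_c}(H)$, so no canonical substitute (Steiner point, etc.) restores the contraction.

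This is precisely why the paper avoids projections altogether. It runs a Filippov-type successive approximation: $x_{n+1}(\xi)=e(\gamma_n(\xi),\xi)$, where $\gamma_{n+1}(\xi)$ is \emph{not} the nearest point of $F(\cdot,x_{n+1}(\xi)(\cdot))$ to $\gamma_n(\xi)$, but a continuous-in-$\xi$ selection of the lsc, decomposable-valued multifunction $\xi\mapsto\{f\in S^1_{F(\cdot,x_{n+1}(\xi)(\cdot))}:|f(t)-\gamma_n(\xi)(t)|<k(t)\beta_{n+1}(\xi)(t)\ \mbox{a.e.}\}$, furnished by the Bressan--Colombo selection theorem (the $h$-Lipschitz hypothesis $H(F)_2(ii)$ is used only through the distance estimate $d(\gamma_n(\xi)(t),F(t,x_{n+1}(\xi)(t)))\leq k(t)|x_n(\xi)(t)-x_{n+1}(\xi)(t)|$, which controls distances, not nearest points). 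The resulting bounds give uniform Cauchy convergence of $x_n(\xi)$ and $\gamma_n(\xi)$ on bounded sets of $H$, whence the continuous selection $\vartheta$. If you wish to keep a fixed-point flavour, the natural object is the set-valued map $f\mapsto S^2_{F(\cdot,p(\xi,f)(\cdot))}$ treated by a Covitz--Nadler set-valued contraction; but that only reproves nonemptiness of $S(\xi)$ (already known from Theorem \ref{th6}) and does not by itself produce a selection depending continuously on $\xi$, which is the actual content of Proposition \ref{prop7}.
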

\begin{proof}
	Consider the following auxiliary Cauchy problem
	$$-x'(t)\in A(t,x(t))\ \mbox{for almost all}\ t\in T=[0,b],\ x(0)=\xi.$$
	
	This problem has a unique solution $x_0(\xi)\in W_p(0,b)$ (see Proposition \ref{prop3} and use the monotonicity of $A(t,\cdot)$ and Proposition \ref{prop2} to check the uniqueness of this solution).
	
	If $\xi_1,\xi_2\in H$, then
	$$-x'_0(\xi_1)=h^*_1\ \mbox{and}\ -x'_0(\xi_2)=h^*_2\ \mbox{with}\ h^*_k\in a(x_0(\xi_k))\ \mbox{for}\ k=1,2.$$
	
	So, using Proposition \ref{prop2}, we have
	\begin{eqnarray}\label{eq49}
		&&\frac{1}{2}|x_0(\xi_1)(t)-x_0(\xi_2)(t)|^2+\int^t_0\left\langle h^*_1(s)-h^*_2(s),x_0(\xi_1)(s)-x_0(\xi_2)(s)\right\rangle ds=\nonumber\\
		&&\hspace{1cm}\frac{1}{2}|\xi_1-\xi_2|^2\ \mbox{for all}\ t\in T,\nonumber\\
		&\Rightarrow&||x_0(\xi_1)-x_0(\xi_2)||_{C(T,H)}\leq|\xi_1-\xi_2|\ (\mbox{recall}\ A(t,\cdot)\ \mbox{is monotone}).
	\end{eqnarray}
	
	We consider the multifunction $\Gamma_0:H\rightarrow P_{wkc}(L^1(T,H))$ defined by
	$$\Gamma_0(\xi)=S^1_{F(\cdot,x_0(\xi)(\cdot))}\ \mbox{for all}\ \xi\in H.$$
	
	We have
	\begin{eqnarray*}
		h(\Gamma_0(\xi),\Gamma_0(\xi_2))&&=\sup\left[|\sigma(g,\Gamma_0(\xi_1))-\sigma(g,\Gamma_0(\xi_2))|:g\in L^{\infty}(T,H)=L^1(T,H)^*,\right.\\
		&&\hspace{1cm}\left.||g||_{L^{\infty}(T,H)}\leq 1\right]\\
		&&\leq\int^b_0\sup\left[|\sigma(v,F(t,x_0(\xi_1)(t)))-\sigma(v,F(t,x_0(\xi_2)(z)))|:|v|\leq 1\right]dt\\
		&&(\mbox{see Papageorgiou and Kyritsi \cite[Theorem 6.4.16, p. 492]{34}})\\
		&&=\int^b_0h(F(t,x_0(\xi_1)(t)),F(t,x_0(\xi_2)(t)))dt\\
		&&\leq\int^b_0k(t)|x_0(\xi_1)(t)-x_0(\xi_2)(t)|dt\\
		&&\leq\int^b_0k(t)|\xi_1-\xi_2|dt\ (\mbox{see (\ref{eq49})})\\
		&&=||k||_1|\xi_1-\xi_2|,\\
		\Rightarrow \xi\mapsto\Gamma_0(\xi)&&\mbox{is $h$-Lipschitz}.
	\end{eqnarray*}
	
	Also, $\Gamma_0(\cdot)$ has decomposable values. So, we can apply the selection theorem of Bressan and Colombo \cite{5} (see also Hu and Papageorgiou \cite[Theorem 8.7, p. 245]{23}) and find a continuous map $\gamma_0:H\rightarrow L^1(T,H)$  such that $\gamma_0(\xi)\in\Gamma_0(\xi)$ for all $\xi\in H$. Evidently $\gamma_0(\xi)\in L^2(T,H)$ for all $\xi\in H$.
	
	We consider the following auxiliary Cauchy problem:
	$$-x'(t)\in A(t,x(t))+\gamma_0(\xi)(t)\ \mbox{for almost all}\ t\in T,\ x(0)=\xi.$$
	
	This problem has a unique solution $x_1(\xi)\in W_p(0,b)$.
	
	By induction we will produce two sequences
	$$\{x_n(\xi)\}_{n\geq 1}\subseteq W_p(0,b)\ \mbox{and}\ \{\gamma_n(\xi)\}_{n\geq 1}\subseteq L^2(T,H)$$
	which satisfy:
	\begin{itemize}
		\item[(a)] $x_n(\xi)\in W_p(0,b)$ is the unique solution of the Cauchy problem
		\begin{equation}\label{eq50}
			-x'(t)\in A(t,x(t))+\gamma_{n-1}(\xi)(t)\ \mbox{for almost all}\ t\in T,\ x(0)=\xi ;
		\end{equation}
		\item[(b)] $\xi\mapsto\gamma_n(\xi)$ is continuous from $H$ into $C(T,H)$;
		\item[(c)] $\gamma_n(\xi)(t)\in F(t,x_n(\xi)(t))$ for almost all $t\in T$, all $\xi\in H$;
		\item[(d)] $|\gamma_n(\xi)(t)-\gamma_{n-1}(\xi)(t)|\leq k(t)\beta_n(\xi)(t)$ for almost all $t\in T$, all $\xi\in H$, where
		 $$\beta_n(\xi)(t)=2\int^t_0\eta(\xi)(s)\frac{(\tau(t)-\tau(s))^{n-1}}{(n-1)!}ds+2b\left(\overset{n}{\underset{\mathrm{k=0}}\sum}\frac{\epsilon}{2^{k+1}}\right)\frac{\tau(t)^{n-1}}{(n-1)!}$$
		with $\epsilon>0,\eta(\xi)(t)=a_2(t)+c_2|x_0(\xi)(t)|,\tau(t)=\int^t_0k(s)ds.$
	\end{itemize}
	
	Note that the maps $\xi\mapsto\eta(\xi)$ and $\xi\mapsto\beta_n(\xi)$ are continuous from $H$ into $L^1(T)$.
	
	So, suppose we have produced $\{x_k(\xi)\}^n_{k=1}$ and $\{\gamma_k(\xi)\}^n_{k=1}$ (induction hypothesis). Let $x_{n+1}(\xi)\in W_p(0,b)$ be the unique solution of the Cauchy problem
	\begin{equation}\label{eq51}
		-x'(t)\in A(t,x(t))+\gamma_n(\xi)(t)\ \mbox{for almost all}\ t\in T,\ x(0)=\xi.
	\end{equation}
	
	By (\ref{eq50}) and (\ref{eq51}) we have
	\begin{eqnarray}
		&&-x'_n(\xi)=h^*_n+\gamma_{n-1}(\xi)\ \mbox{and}\ -x'_{n+1}(\xi)=h^*_{n+1}+\gamma_n(\xi)\ \mbox{in}\ L^{p'}(T,X^*)\label{eq52}\\
		&&\mbox{with}\ h^*_n\in a(x_n(\xi)),h^*_{n+1}\in a(x_{n+1}(\xi)).\label{eq53}
	\end{eqnarray}
	
	Using (\ref{eq52}) we can write
	\begin{eqnarray}\label{eq54}
		&&\left\langle x'_{n+1}(\xi)(t)-x'_n(\xi)(t),x_{n+1}(\xi)(t)-x_n(\xi)(t)\right\rangle\nonumber\\
		&=&\left\langle h^*_n(\xi)(t)-h^*_{n+1}(\xi)(t),x_{n+1}(\xi)(t)-x_n(\xi)(t)\right\rangle+(\gamma_{n-1}(\xi)(t)-\nonumber\\
		&&\hspace{1cm}\gamma_n(\xi)(t),x_{n+1}(\xi)(t)-x_n(\xi)(t))\nonumber\\
		&\leq&(\gamma_{n-1}(\xi)(t)-\gamma_n(\xi)(t),x_{n+1}(\xi)(t)-x_n(\xi)(t))\ \mbox{for almost all}\ t\in T\nonumber\\
		&&\hspace{1cm}(\mbox{see hypothesis}\ H(A)_2(ii)\ \mbox{and}\ \eqref{eq53})\nonumber\\
		&\Rightarrow&\frac{1}{2}\ \frac{d}{dt}|x_{n+1}(\xi)(t)-x_n(\xi)(t)|^2\leq|\gamma_{n-1}(\xi)(t)-\gamma_n(\xi)(t)|\ |x_{n+1}(\xi)(t)-x_n(\xi)(t)|\nonumber\\
		&&\hspace{1cm}\mbox{for almost all}\ t\in T\ (\mbox{see Proposition \ref{prop2}}),\nonumber\\
		&\Rightarrow&\frac{1}{2}|x_{n+1}(\xi)(t)-x_n(\xi)(t)|^2\leq\int^t_0|\gamma_{n-1}(\xi)(s)-\gamma_n(\xi)(s)|\ |x_{n+1}(\xi)(s)-x_n(\xi)(s)|ds\\
		&&\hspace{1cm}\mbox{for all}\ t\in T.\nonumber
	\end{eqnarray}
	
	By (\ref{eq54}) and Lemma A.5, p. 157 of Brezis \cite{6}, we infer that
	\begin{eqnarray}\label{eq55}
		|x_{n+1}(\xi)(t)-x_n(\xi)(t)|&\leq&\int^t_0|\gamma_{n-1}(\xi)(s)-\gamma_n(\xi)(s)|ds\nonumber\\
		&\leq&\int^t_0k(s)\beta_n(\xi)(s)ds\ (\mbox{by the induction hypothesis, see (d)})\nonumber\\
		&=&2\int^t_0k(s)\int^s_0\eta(\xi)(r)\frac{(\tau(s)-\tau(r))^{n-1}}{(n-1)!}dr\ ds+\nonumber\\
		 &&\hspace{1cm}2b\left(\overset{n}{\underset{\mathrm{k=0}}\sum}\frac{\epsilon}{2^{k+1}}\right)\int^t_0k(s)\frac{\tau(s)^{n-1}}{(n-1)!}ds\nonumber\\
		&=&2\int^t_0\eta(\xi)(s)\int^t_s k(r)\frac{(\tau(r)-\tau(s))^{n-1}}{(n-1)!}dr\ ds+\nonumber\\
		 &&\hspace{1cm}2b\left(\overset{n}{\underset{\mathrm{k=0}}\sum}\frac{\epsilon}{2^{k+1}}\right)\int^t_0\frac{d}{ds}\frac{\tau(s)^{n}}{n!}ds\nonumber\\
		&=&2\int^t_0\eta(\xi)(s)\int^t_s \frac{d}{dr}\frac{(\tau(r)-\tau(s))^{n}}{n!}dr\ ds+2b\left(\overset{n}{\underset{\mathrm{k=0}}\sum}\frac{\epsilon}{2^{k+1}}\right)\frac{\tau(t)^{n}}{n!}\nonumber\\
		 &=&2\int^t_0\eta(\xi)(s)\frac{(\tau(t)-\tau(s))^{n}}{n!}ds+2b\left(\overset{n}{\underset{\mathrm{k=0}}\sum}\frac{\epsilon}{2^{k+1}}\right)\frac{\tau(t)^{n}}{n!}\nonumber\\
		&<&\beta_{n+1}(\xi)(t)\ \mbox{for almost all}\ t\in T\ (\mbox{see (d)}).
	\end{eqnarray}
	
	Using the induction hypothesis (see (c)) and hypothesis $H(F)_2(ii)$, we have
	\begin{eqnarray}\label{eq56}
		d(\gamma_n(\xi)(t),F(t,x_{n+1}(\xi)(t)))&\leq&h(F(t,x_n(\xi)(t)),F(t,x_{n+1}(\xi)(t)))\nonumber\\
		&\leq&k(t)|x_n(\xi)(t)-x_{n+1}(\xi)(t)|\nonumber\\
		&<&k(t)\beta_{n+1}(\xi)(t)\ \mbox{for almost all}\ t\in T\ (\mbox{see (\ref{eq55})}).
	\end{eqnarray}
	
	Consider the multifunction $\Gamma_{n+1}:H\rightarrow 2^{L^1(T,H)}$ defined by
	$$\Gamma_{n+1}(\xi)=\{f\in S^1_{F(\cdot,x_{n+1}(\xi)(\cdot))}:|\gamma_n(\xi)(t)-f(t)|<k(t)\beta_{n+1}(\xi)(t)\ \mbox{for almost all}\ t\in T\}.$$
	
	By (\ref{eq56}) and Lemma 8.3 on p. 239 of Hu and Papageorgiou \cite{23}, we have that
	\begin{eqnarray*}
		&&\xi\mapsto\Gamma_{n+1}(\xi)\ \mbox{has nonempty decomposable values and it is lsc,}\\
		&\Rightarrow&\xi\mapsto\overline{\Gamma_{n+1}(\xi)}\ \mbox{is lsc with decomposable values.}
	\end{eqnarray*}
	
	We can apply the selection theorem of Bressan and Colombo \cite[Theorem 3]{5}  to find a continuous map $\gamma_{n+1}:H\rightarrow L^1(T,H)$ such that $\gamma_{n+1}(\xi)\in\overline{\Gamma_{n+1}(\xi)}$ for all $\xi\in H$.
	
	This completes the induction process and we have produced  two sequences $\{x_n(\xi)\}_{n\geq 1}$, $\{\gamma_n(\xi)\}_{n\geq 1}$ which satisfy properties $(a)\rightarrow(d)$ stated earlier.
	
	From (\ref{eq55}) we have
	\begin{eqnarray}\label{eq57}
		\int^b_0|\gamma_n(\xi)(t)-\gamma_{n-1}(\xi)(t)|dt&<&\int^b_0\beta_{n+1}(\xi)(t)dt\nonumber\\
		&<&\int^b_0\eta(\xi)(t)\frac{(\tau (b)-\tau(t))^n}{n!}dt+2b\epsilon\frac{\tau(b)^n}{n!}\nonumber\\
		&\leq&\frac{\tau(b)^n}{n!}[||\eta(\xi)||_1+2b\epsilon].
	\end{eqnarray}
	
	Recall that $\xi\mapsto\eta(\xi)$ is continuous from $H$ into $L^1(H)$ and maps bounded sets to bounded sets. So,  it follows from (\ref{eq57}) that
	$$\{\gamma_n(\xi)\}_{n\geq 1}\subseteq L^1(T)\ \mbox{is Cauchy, uniformly on bounded sets of}\ H.$$
	
	Moreover, from (\ref{eq55}) and (\ref{eq57}), we have
	\begin{eqnarray*}
		 &&||x_{n+1}(\xi)-x_n(\xi)||_{C(T,H)}\leq||\gamma_n(\xi)-\gamma_{n-1}(\xi)||_{L^1(T,H)}\leq\frac{\tau(b)^n}{n!}[||\eta(\xi)||_1+2b\epsilon],\nonumber\\
		&\Rightarrow&\{x_n(\xi)\}_{n\geq 1}\subseteq C(T,H)\ \mbox{is Cauchy, uniformly on bounded sets}.
	\end{eqnarray*}
	
	Therefore we have
	\begin{equation}\label{eq58}
		x_n(\xi)\rightarrow x(\xi)\ \mbox{in}\ C(T,H)\ \mbox{and}\ \gamma_n(\xi)\rightarrow\gamma(\xi)\ \mbox{in}\ L^1(T,H).
	\end{equation}
	
	Evidently, $\xi\mapsto x(\xi)$ is continuous from $H$ into $C(T,H)$, while because of hypothesis $H(F)_2(iii)$, we have $\gamma_n(\xi)\rightarrow\gamma(\xi)$ in  $L^2(T,H)$. Let $\hat{x}(\xi)\in W_p(0,b)$ be the unique solution of
	$$-y'(t)\in A(t,y(t))+\gamma(\xi)(t)\ \mbox{for almost all}\ t\in T,\ y(0)=0.$$
	
	As before, exploiting the monotonicity of $A(t,\cdot)$ (see hypothesis $H(A)_2(ii)$), we have
	\begin{eqnarray*}
		&&\frac{1}{2}|x_{n+1}(\xi)(t)-\hat{x}(\xi)(t)|^2\leq\int^t_0|\gamma_n(\xi)(s)-\gamma(\xi)(s)|\ |x_{n+1}(\xi)(s)-\hat{x}(\xi)(s)|ds\\
		&&\hspace{1cm}\mbox{for all}\ t\in T,\\
		&\Rightarrow&||x_{n+1}(\xi)-\hat{x}(\xi)||_{C(T,H)}\leq||\gamma_n(\xi)-\gamma(\xi)||_{L^1(T,H)}\\
		&&\hspace{1cm}(\mbox{see Brezis \cite[Lemma A.5, p. 157]{6}})\\
		&\Rightarrow&x(\xi)=\hat{x}(\xi)\ (\mbox{see (\ref{eq58})}).
	\end{eqnarray*}
	
	So, $x(\xi)\in S(\xi)$ and the map $\vartheta:H\rightarrow C(T,H)$ defined by $\vartheta(\xi)=x(\xi)$ is a continuous selection of the solution multifunction $\xi\mapsto S(\xi)$.
\end{proof}

An easy, but useful consequence of Proposition \ref{prop7} and of its proof, is a parametric version of the Filippov-Gronwall inequality (see Aubin and Cellina \cite[Theorem 1, pp. 120-121]{1} and Frankowska \cite{19}) for differential inclusions.

So, we consider the following parametric version of problem (\ref{eq5}):
$$-x'(t)\in A(t,x(t))+F(t,x(t),\lambda)\ \mbox{for almost all}\ t\in T,\ x(0)=\xi(\lambda).$$

The parameter space $D$ is a complete metric space. The hypotheses on the parametric vector field $F(t,x,\lambda)$ and the initial condition $\xi(\lambda)$ are the following:

\smallskip
$H(F)'_2:$ $F:T\times H\times D\rightarrow P_{f_c}(H)$ is a multifunction such that
\begin{itemize}
	\item[(i)] for all $(x,\lambda)\in H\times D,\ t\mapsto F(t,x,\lambda)$ is graph measurable;
	\item[(ii)] for almost all $t\in T$, all $x,y\in H$, all $\lambda\in D$, we have
	$$h(F(t,x,\lambda),F(t,y,\lambda))\leq k(t)|x-y|$$
	with $k\in L^1(T)_+$;
	\item[(iii)] for almost all $t\in T$, all $x\in H$, all $\lambda\in D$ and all $h\in F(t,x)$, we have
	$$|h|\leq a_3(t)+c_3|x|$$
	with $a_3\in L^2(T)_+,\ c_3>0$ and if $p=2$, then $\beta^2c_3<c_2$ (see (\ref{eq2}));
	\item[(iv)] for almost all $t\in T$, all $x\in H$, the multifunction $\lambda\mapsto F(t,x,\lambda)$ is lsc.
\end{itemize}

\smallskip
$H_0:$ the mapping $\lambda\mapsto\xi(\lambda)$ is continuous from $D$ into $H$.

\smallskip
Assume that $\lambda\mapsto (u(\lambda),h(\lambda))$ is a continuous map from $D$ into $C(T,H)\times L^2(T,H)$. We can find a continuous map $p:D\rightarrow L^2(T)$ such that
$$d(h(\lambda)(t),F(t,u(\lambda)(t),\lambda))\leq p(\lambda)(t)\ \mbox{for almost all}\ t\in T$$
(see hypothesis $H(F)'_2(iii)$).

In what follows, by $e(h,\lambda)\in W_p(0,b)$ we denote the unique solution of the Cauchy problem
$$-u'(t)\in A(t,u(t))+h(t)\ \mbox{for almost all}\ t\in T,\ u(0)=\xi(\lambda),$$
with $h\in L^2(T,H)$.

We have the following approximation result.
\begin{prop}\label{prop8}
	Assume that hypotheses $H(A)_2,H(F)'_2,H_0$ hold, $\lambda\mapsto (u(\lambda),h(\lambda))$ is a continuous map from $D$ into $C(T,H)\times L^2(T,H)$ with $u(\lambda)=e(h(\lambda),\lambda)$, $\epsilon>0$ and $p:D\rightarrow L^2(T)_+$ is a continuous map such that
	$$d(h(\lambda)(t),F(t,u(\lambda)(t),\lambda))\leq p(\lambda)(t)\ \mbox{for almost all}\ t\in T.$$
	Then there exists a continuous map $\lambda\mapsto (x(\lambda),f(\lambda))$ from $D$ into $C(T,H)\times L^2(T,H)$ such that
	$$x(\lambda)=e(f(\lambda),\lambda)\ \mbox{with}\ f(\lambda)\in S^2_{F(\cdot,x(\lambda)(\cdot),\lambda)}$$
	and $|x(\lambda)(t)-u(\lambda)(t)|\leq b\epsilon e^{\tau(t)}+\int^t_0p(\lambda)(s)e^{\tau(t)-\tau(s)}ds$ for all $t\in T$ with $\tau(t)=\int^t_0k(s)ds$.
\end{prop}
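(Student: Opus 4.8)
The plan is to run the parametrized Filippov--Gronwall iteration used in the proof of Proposition~\ref{prop7}, now starting the induction at the given approximate pair $(u(\lambda),h(\lambda))$ and carrying the continuity in $\lambda$ along at every stage. Write $\tau(t)=\int^t_0k(s)\,ds$ and, for $n\geq1$, set
$$\beta_n(\lambda)(t)=\int^t_0p(\lambda)(s)\,\frac{(\tau(t)-\tau(s))^{n-1}}{(n-1)!}\,ds+b\epsilon\Big(\sum^{n}_{j=0}\frac{1}{2^{j+1}}\Big)\frac{\tau(t)^{n-1}}{(n-1)!}\,;$$
each $\lambda\mapsto\beta_n(\lambda)$ is continuous from $D$ into $L^1(T)$ (since $\lambda\mapsto p(\lambda)$ is continuous), and because $\sum_{n\geq1}\tfrac{\tau(t)^{n-1}}{(n-1)!}=e^{\tau(t)}$ and $\sum^n_{j=0}2^{-(j+1)}<1$ one has $\sum_{n\geq1}\beta_n(\lambda)(t)\leq b\epsilon\,e^{\tau(t)}+\int^t_0p(\lambda)(s)e^{\tau(t)-\tau(s)}\,ds$. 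I would build by induction on $n$ maps $\lambda\mapsto x_n(\lambda)$ continuous from $D$ into $C(T,H)$ and $\lambda\mapsto f_n(\lambda)$ continuous from $D$ into $L^2(T,H)$ with $x_0(\lambda)=u(\lambda)$, $x_{n+1}(\lambda)=e(f_n(\lambda),\lambda)$, $f_n(\lambda)(t)\in F(t,x_n(\lambda)(t),\lambda)$ for almost all $t\in T$, the base bound $|f_0(\lambda)(t)-h(\lambda)(t)|<p(\lambda)(t)+\epsilon/2$, and $|f_n(\lambda)(t)-f_{n-1}(\lambda)(t)|\leq k(t)\beta_n(\lambda)(t)$ for almost all $t\in T$, all $n\geq1$ (with the convention $f_{-1}(\lambda):=h(\lambda)$).

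For the base step, $d(h(\lambda)(t),F(t,u(\lambda)(t),\lambda))\leq p(\lambda)(t)<p(\lambda)(t)+\epsilon/2$ makes the multifunction $\lambda\mapsto\big\{g\in S^2_{F(\cdot,u(\lambda)(\cdot),\lambda)}:|g(t)-h(\lambda)(t)|<p(\lambda)(t)+\epsilon/2\ \text{for a.a.}\ t\in T\big\}$ have nonempty decomposable values; by Lemma~8.3, p.~239 of Hu and Papageorgiou \cite{23} (using $H(F)'_2(i),(iv)$ and the continuity of $\lambda\mapsto u(\lambda),h(\lambda),p(\lambda)$) its closure is lsc with closed decomposable values, so the selection theorem of Bressan and Colombo \cite{5} yields a continuous $f_0:D\to L^1(T,H)$, which by $H(F)'_2(iii)$ lies in $L^2(T,H)$ and is continuous there (cf.\ the $L^2$-upgrade at the end of the proof). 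For the inductive step, given $f_{n-1}(\lambda)$ one puts $x_n(\lambda)=e(f_{n-1}(\lambda),\lambda)$; the monotonicity of $A(t,\cdot)$ (hypothesis $H(A)_2(ii)$), Proposition~\ref{prop2} and Lemma~A.5, p.~157 of Brezis \cite{6} give $|x_n(\lambda)(t)-x_{n-1}(\lambda)(t)|\leq\int^t_0|f_{n-1}(\lambda)(s)-f_{n-2}(\lambda)(s)|\,ds$, which by the inductive bound and the same convolution computation as in $(\ref{eq55})$ is strictly less than $\beta_n(\lambda)(t)$ — the extra summand $b\epsilon\,2^{-(n+1)}\tau(t)^{n-1}/(n-1)!$ in $\beta_n$ is precisely the slack forcing the strict inequality. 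Then $H(F)'_2(ii)$ and $f_{n-1}(\lambda)(t)\in F(t,x_{n-1}(\lambda)(t),\lambda)$ give $d(f_{n-1}(\lambda)(t),F(t,x_n(\lambda)(t),\lambda))\leq k(t)|x_{n-1}(\lambda)(t)-x_n(\lambda)(t)|<k(t)\beta_n(\lambda)(t)$, so $\Gamma_n(\lambda)=\big\{f\in S^2_{F(\cdot,x_n(\lambda)(\cdot),\lambda)}:|f_{n-1}(\lambda)(t)-f(t)|<k(t)\beta_n(\lambda)(t)\ \text{for a.a.}\ t\in T\big\}$ is nonempty, decomposable-valued, and (again by Lemma~8.3 of \cite{23}) its closure is lsc in $\lambda$; Bressan--Colombo then produces the continuous $f_n:D\to L^1(T,H)\cap L^2(T,H)$, completing the induction.

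It remains to pass to the limit. From the pointwise bounds, $\int^b_0|f_n(\lambda)(t)-f_{n-1}(\lambda)(t)|\,dt\leq\|k\|_1\,\tfrac{\tau(b)^{n-1}}{(n-1)!}\big(\|p(\lambda)\|_1+b\epsilon\big)$ (and the analogous $n=1$ bound), so $\{f_n(\lambda)\}$ is Cauchy in $L^1(T,H)$ and, via $\|x_{n+1}(\lambda)-x_n(\lambda)\|_{C(T,H)}\leq\|f_n(\lambda)-f_{n-1}(\lambda)\|_{L^1(T,H)}$, $\{x_n(\lambda)\}$ is Cauchy in $C(T,H)$, both locally uniformly in $\lambda$; hence $x_n(\lambda)\to x(\lambda)$ in $C(T,H)$ and $f_n(\lambda)\to f(\lambda)$ in $L^1(T,H)$ with $\lambda\mapsto x(\lambda)$ continuous into $C(T,H)$. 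Since $\sup_n\|x_n(\lambda)\|_{C(T,H)}$ is locally bounded in $\lambda$, hypothesis $H(F)'_2(iii)$ dominates $\{|f_n(\lambda)(\cdot)|\}$ by a fixed $L^2(T)$ function on a neighbourhood of any point, so Vitali's theorem upgrades the convergence $f_n(\lambda)\to f(\lambda)$ to $L^2(T,H)$, locally uniformly in $\lambda$, whence $\lambda\mapsto f(\lambda)$ is continuous from $D$ into $L^2(T,H)$. Continuity of $(g,\lambda)\mapsto e(g,\lambda)$ from $L^2(T,H)\times D$ into $C(T,H)$ (a consequence of $H(A)_2(ii)$, Proposition~\ref{prop2}, Lemma~A.5 of \cite{6} and $H_0$) together with $x_{n+1}(\lambda)=e(f_n(\lambda),\lambda)$ gives $x(\lambda)=e(f(\lambda),\lambda)$; from $f_n(\lambda)(t)\in F(t,x_n(\lambda)(t),\lambda)$, $H(F)'_2(ii)$, $x_n(\lambda)\to x(\lambda)$ in $C(T,H)$, passage to an a.e.-convergent subsequence of $\{f_n(\lambda)\}$, and the closedness of the values of $F$, we obtain $f(\lambda)(t)\in F(t,x(\lambda)(t),\lambda)$ for almost all $t$, i.e. $f(\lambda)\in S^2_{F(\cdot,x(\lambda)(\cdot),\lambda)}$; and finally
$$|x(\lambda)(t)-u(\lambda)(t)|\leq\sum_{n\geq0}|x_{n+1}(\lambda)(t)-x_n(\lambda)(t)|\leq\sum_{n\geq1}\beta_n(\lambda)(t)\leq b\epsilon\,e^{\tau(t)}+\int^t_0p(\lambda)(s)e^{\tau(t)-\tau(s)}\,ds\,.$$

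The principal obstacle, exactly as in Proposition~\ref{prop7}, is the bookkeeping: the comparison functions $\beta_n(\lambda)$ must be chosen with just enough geometric reserve in their $\epsilon$-parts that (i) the sets $\Gamma_n(\lambda)$ are nonempty at every stage (which is what Bressan--Colombo requires) and (ii) the telescoped series collapses to the stated bound. The secondary technical point is the simultaneous control of the continuity in $\lambda$: at each step this rests on $H(F)'_2(iv)$ together with Lemma~8.3 of \cite{23}, and on upgrading $L^1$- to $L^2$-convergence locally uniformly via the growth condition $H(F)'_2(iii)$ and Vitali's theorem.
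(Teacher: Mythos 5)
Your proposal is correct and follows essentially the same route as the paper: the Bressan--Colombo selection from the $\epsilon$-enlarged set around $h(\lambda)$ to start, the Filippov--Gronwall iteration of Proposition \ref{prop7} carried out with continuity in $\lambda$ via Lemma 8.3 of Hu and Papageorgiou, the monotonicity estimate from $H(A)_2(ii)$ together with Brezis' Lemma A.5 for the increments, uniform-on-compacta Cauchy limits, the $L^2$-upgrade from $H(F)'_2(iii)$, and the telescoped comparison functions yielding the stated bound. The only differences are cosmetic bookkeeping choices (your $\epsilon/2$ and geometric reserve versus the paper's $\epsilon$-slack in property (d)).
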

\begin{proof}
	Consider the multifunction $R_{\epsilon}:D\rightarrow 2^{L^1(T,H)}$ defined by
	$$R_{\epsilon}(\lambda)=\{v\in S^1_{F(\cdot,u(\lambda)(\cdot),\lambda)}:|v(t)-h(\lambda)(t)|<p(\lambda)(t)+\epsilon\ \mbox{for almost all}\ t\in T\}.$$
	
	This multifunction has nonempty, decomposable values and it is lsc (see Hu and Papageorgiou \cite[Lemma 8.3, p. 239]{23}). Hence $\lambda\mapsto\overline{R_{\epsilon}(\lambda)}$ has the same properties. So, we can find a continuous map $\gamma_0:D\rightarrow L^1(T,H)$ such that
	$$\gamma_0(\lambda)\in\overline{R_{\epsilon}(\lambda)}\ \mbox{for all}\ \lambda\in D.$$
	
	Let $x_1(\lambda)\in W_p(0,b)$ be the unique solution of the following Cauchy problem
	$$-x'(t)\in A(t,x(t))+\gamma_0(\lambda)(t)\ \mbox{for almost all}\ t\in T,\ x(0)=\xi(\lambda).$$
	
	Then as in the proof of Proposition \ref{prop7},  we can generate by induction two sequences
	$$\{x_n(\lambda)\}_{n\geq 1}\subseteq W_p(0,b)\ \mbox{and}\ \{\gamma_n(\lambda)\}_{n\geq 1}\subseteq L^2(T,H)$$
	satisfying properties $(a)\rightarrow(d)$ listed in the proof of Proposition \ref{prop7}.
	
	As before (see the proof of Proposition \ref{prop7}), we have
	$$|x_{n+1}(\lambda)(t)-x_n(\lambda)(t)|\leq||\gamma_n(\lambda)-\gamma_{n-1}(\lambda)||_{L^1(T,H)}\ \mbox{for all}\ (\lambda,t)\in D\times T.$$
	
	From this inequality and property $(d)$ of the sequences (see the proof of Proposition \ref{prop7}), we infer that
	$$\{x_n(\lambda)\}_{n\geq 1}\subseteq C(T,H)\ \mbox{and}\ \{\gamma_n(\lambda)\}_{n\geq 1}\subseteq L^1(T,H)$$
	are both Cauchy uniformly in $\lambda\in K\subseteq D$ compact (recall that $\lambda\mapsto p(\lambda)$ is continuous, hence locally bounded). So, we have
	$$x_n(\lambda)\rightarrow\hat{x}(\lambda)\ \mbox{in}\ C(T,H)\ \mbox{and}\ \gamma_n(\lambda)\rightarrow\hat{\gamma}(\lambda)\ \mbox{in}\ L^1(T,H)$$
	and both maps $D\ni \lambda\mapsto\hat{x}(\lambda)\in C(T,H)$ and $D\ni\lambda\mapsto\hat{\gamma}(\lambda)\in L^1(T,H)$ are continuous. Moreover, we have $\hat{\gamma}(\lambda)\in S^2_{F(\cdot,\hat{x}(\lambda)(\cdot),\lambda)}$ (see the proof of Theorem \ref{th6}) and that $\lambda\mapsto\hat{\gamma}(\lambda)$ is continuous from $D$ into $L^2(T,H)$. If $x(\lambda)=e(\hat{\gamma}(\lambda),\lambda)$, then
	\begin{eqnarray*}
		&&|x_n(\lambda)(t)-x(\lambda)(t)|\leq\int^b_0|\gamma_{n-1}(\lambda)(s)-\hat{\gamma}(\lambda)(s)|ds\rightarrow 0\ \mbox{for all}\ t\in T,\\
		&\Rightarrow&\hat{x}(\lambda)=x(\lambda)=e(\hat{\gamma}(\lambda),\lambda)\ \mbox{for all}\ \lambda\in D.
	\end{eqnarray*}
	
	From the triangle inequality, we have
	\begin{eqnarray*}
		&&|u(\lambda)(t)-x_n(\lambda)(t)|\\
		 &\leq&|u(\lambda)(t)-x_1(\lambda)(t)|+\overset{n-1}{\underset{\mathrm{k=1}}\sum}|x_k(\lambda)(t)-x_{k+1}(\lambda)(t)|\ \mbox{for all}\ t\in T.
	\end{eqnarray*}
	
	Using property $(d)$ (see the proof of Proposition \ref{prop7}), we have
	 $$|x_k(\lambda)(t)-x_{k+1}(\lambda)(t)|\leq\frac{1}{k!}\int^t_0p(\lambda)(s)(\tau(t)-\tau(s))^kds+\frac{b\epsilon}{k!}\tau(t)^k\ \mbox{for all}\ t\in T.$$
	
	So, finally we can write that
	$$|u(\lambda)(t)-x(\lambda)(t)|\leq b\epsilon e^{\tau(b)}+\int^t_0p(\lambda)(s)e^{\tau(t)-\tau(s)}ds\ \mbox{for all}\ t\in T,\ \mbox{all}\ \lambda\in D.$$
\end{proof}

We want to strengthen Proposition \ref{prop7}, and require that the selection $\vartheta(\cdot)$ passes through a preassigned solution. We mention that an analogous result for differential inclusions in $\RR^N$ with $A\equiv 0$, was proved by Cellina and Staicu \cite{10}.

We start with a simple technical lemma.
\begin{lemma}\label{lem9}
	If $\{u_k\}_{k=0}^N\subseteq L^1(T,H)$ and $\{T_k(\xi)\}^N_{k=0}$ is a partition of $T=[0,b]$ with endpoints which depend continuously on $\xi\in H$, then there exists $\hat{d}\in L^1(T)_+$ for which the following holds:
	\begin{center}
	``given $\epsilon>0$, we can find $\delta>0$ such that\\
	 $|\xi-\xi'|\leq\delta\Rightarrow\left|\overset{N}{\underset{\mathrm{k=0}}\sum}\chi_{T_k(\xi)}(t)u_k(t)-\overset{N}{\underset{\mathrm{k=0}}\sum}\chi_{T_k(\xi')}(t)u_k(t)\right|\leq\hat{d}(t)\chi_C(t)$\\
	with $C\subseteq T$ measurable and $|C|_1\leq\epsilon$".
	\end{center}
\end{lemma}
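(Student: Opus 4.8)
The plan is to reduce everything to the elementary remark that, at almost every $t\in T$, the sum $\sum_{k=0}^{N}\chi_{T_k(\xi)}(t)u_k(t)$ merely selects the single function $u_{\kappa(t,\xi)}(t)$, where $\kappa(t,\xi)$ is the index of the (a.e. unique) block $T_k(\xi)$ containing $t$; consequently the difference appearing in the displayed inequality is supported on the set $C(\xi,\xi')$ where the two index functions $\kappa(\cdot,\xi)$ and $\kappa(\cdot,\xi')$ disagree, and on that set it is dominated by $|u_{\kappa(t,\xi)}(t)|+|u_{\kappa(t,\xi')}(t)|$. Accordingly I would take
$$\hat d(t)=2\sum_{k=0}^{N}|u_k(t)|,$$
which lies in $L^1(T)_+$ because each $u_k\in L^1(T,H)$, and which is visibly independent of $\epsilon$, of $\xi$ and of $\xi'$.

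Next I would describe the partition through its endpoints $0=a_0\le a_1(\xi)\le\cdots\le a_N(\xi)\le a_{N+1}=b$, with each $a_i(\cdot):H\to[0,b]$ continuous and $T_k(\xi)=[a_k(\xi),a_{k+1}(\xi))$ for $k<N$, $T_N(\xi)=[a_N(\xi),b]$ (the precise half-open convention, and the possibility of empty blocks when endpoints coincide, only affect a null set and are thus harmless for any $L^1$ statement). The crux is then the covering claim
$$C(\xi,\xi')\ :=\ \{t\in T:\ \kappa(t,\xi)\neq\kappa(t,\xi')\}\ \subseteq\ \bigcup_{i=1}^{N} I_i(\xi,\xi'),\qquad I_i(\xi,\xi'):=\big[\min(a_i(\xi),a_i(\xi')),\,\max(a_i(\xi),a_i(\xi'))\big].$$
To prove it I would note that if $t$ lies outside every $I_i(\xi,\xi')$ then $\chi_{\{a_i(\xi)\le t\}}=\chi_{\{a_i(\xi')\le t\}}$ for all $i$; since, by monotonicity of the endpoints, $\kappa(t,\xi)$ equals the number of the endpoints $a_1(\xi),\dots,a_N(\xi)$ that lie weakly below $t$, this forces $\kappa(t,\xi)=\kappa(t,\xi')$, so $t\notin C(\xi,\xi')$.

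Finally, given $\epsilon>0$, I would use continuity of the finitely many endpoint maps $a_1,\dots,a_N$ at $\xi$ to pick $\delta>0$ with $|\xi-\xi'|\le\delta\Rightarrow|a_i(\xi)-a_i(\xi')|\le\epsilon/N$ for every $i$; then $|I_i(\xi,\xi')|_1\le\epsilon/N$, so $C:=C(\xi,\xi')$ is measurable with $|C|_1\le\epsilon$, and combining this with the pointwise bound and with the fact that $\{\chi_{T_k(\xi)}\}_k$ and $\{\chi_{T_k(\xi')}\}_k$ are partitions of unity a.e. gives $\big|\sum_k\chi_{T_k(\xi)}u_k-\sum_k\chi_{T_k(\xi')}u_k\big|(t)\le\hat d(t)\chi_C(t)$, as required. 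I do not expect a genuine obstacle beyond this bookkeeping: the only substantive ingredient is the covering claim $C(\xi,\xi')\subseteq\bigcup_i I_i(\xi,\xi')$, and the only point of care is that, if one wants $\delta$ to work uniformly in $\xi$ rather than for a fixed $\xi$, one must additionally invoke uniform continuity of the endpoint maps, e.g. because $\xi$ ranges over a compact set — which is the situation in the applications of this lemma.
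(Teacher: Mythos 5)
Your argument is correct and follows essentially the same route as the paper: dominate the difference pointwise by $\sum_{k}|u_k(t)|$ (your factor $2$ is harmless) restricted to the set where the two partitions disagree, and use continuity of the endpoint maps to make that exceptional set have measure at most $\epsilon$. The only difference is that you spell out, via the covering of the disagreement set by the endpoint intervals $I_i(\xi,\xi')$, the step which the paper merely asserts (its inequality (60)), and you rightly note that uniformity in $\xi$ uses compactness, which is exactly the setting in which the lemma is applied.
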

\begin{proof}
	We have
	\begin{eqnarray}\label{eq59}
		 \left|\overset{N}{\underset{\mathrm{k=0}}\sum}\chi_{T_k(\xi)}(t)u_k(t)-
\overset{N}{\underset{\mathrm{k=0}}\sum}\chi_{T_k(\xi')(t)}(t)u_k(t)\right|&\leq&\overset{N}{\underset{\mathrm{k=0}}\sum}\left|\chi_{T_k(\xi)}(t)-\chi_{T_k(\xi')}\right||u_k(t)|\nonumber\\
		&=&{\underset{\mathrm{k}}\sum}\chi_{T_k(\xi)\Delta T_k(\xi')}(t)|u_k(t)|.
	\end{eqnarray}
	
	We set $\hat{d}(t)=\overset{N}{\underset{\mathrm{k=0}}\sum}|u_k(t)|\in L^1(T)_+$. From the hypothesis concerning the partition $\{T_k(\xi)\}^N_{k=0}$ of $T$, we see that given $\epsilon>0$, we can find $\delta>0$ such that
	\begin{equation}\label{eq60}
		|\xi-\xi'|\leq\delta\Rightarrow\chi_{T_k(\xi)\Delta T_k(\xi')}(t)\leq\chi_C(t)\ \mbox{for almost all}\ t\in T,\ \mbox{all}\ k\in\{0,\ldots,N\}
	\end{equation}
	with $C\subseteq T$ measurable, $|C|_1\leq\epsilon$. Then
	\begin{eqnarray*}
		 \left|\overset{N}{\underset{\mathrm{k=0}}\sum}\chi_{T_k(\xi)}(t)u_k(t)-\overset{N}{\underset{\mathrm{k=0}}\sum}\chi_{T_k(\xi')}(t)u_k(t)\right|&\leq&\chi_C(t)\overset{N}{\underset{\mathrm{k=0}}\sum}|u_k(t)|\ \mbox{(see (\ref{eq59}),(\ref{eq60}))}\\
		&=&\hat{d}(t)\chi_C(t)\ \mbox{for almost all}\ t\in T.
	\end{eqnarray*}
The proof is now complete.
\end{proof}

With this lemma, we can produce a continuous selection of the solution multifunction $\xi\mapsto S(\xi)$, which passes through a preassigned point.

\begin{prop}\label{prop10}
	If hypotheses $H(A)_2,H(F)_2$ hold, $K\subseteq H$ is compact, $\xi_0\in K$ and $v\in S(\xi_0)$, then there exists a continuous map $\psi:K\rightarrow C(T,H)$ such that
	$$\psi(\xi)\in S(\xi)\ \mbox{for all}\ \xi\in K\ \mbox{and}\ \psi(\xi_0)=v.$$
\end{prop}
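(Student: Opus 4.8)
The plan is to reproduce the inductive scheme in the proof of Proposition \ref{prop7}, with the compact set $K$ serving as the (complete metric) parameter space in the spirit of Proposition \ref{prop8}, but with the whole construction \emph{anchored} at $\xi_0$, so that for $\xi=\xi_0$ every approximant coincides with $v$. Since $v\in S(\xi_0)$, fix $g\in S^2_{F(\cdot,v(\cdot))}$ realizing $v$, so that $v$ is the (by $H(A)_2(ii)$ and Proposition \ref{prop2}, unique) solution of $-y'\in A(\cdot,y)+g$, $y(0)=\xi_0$. For $\xi\in K$ let $w(\xi)\in W_p(0,b)$ be the unique solution of $-y'\in A(\cdot,y)+g$, $y(0)=\xi$ (Proposition \ref{prop3}, Lemma \ref{lem5}, plus monotonicity of $A(t,\cdot)$ for uniqueness). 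Exactly as in (\ref{eq49}), monotonicity and Proposition \ref{prop2} give $\|w(\xi)-w(\xi')\|_{C(T,H)}\le|\xi-\xi'|$, so $\xi\mapsto w(\xi)$ is continuous from $K$ into $C(T,H)$, $w(\xi_0)=v$, and $H(F)_2(ii)$ yields
\[
d\big(g(t),F(t,w(\xi)(t))\big)\le h\big(F(t,v(t)),F(t,w(\xi)(t))\big)\le k(t)\,|\xi-\xi_0|\qquad\text{for a.a.\ }t\in T;
\]
thus $w(\xi)$ is an approximate solution of (\ref{eq5}) with initial value $\xi$ whose defect vanishes identically at $\xi=\xi_0$.

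Fix $\epsilon>0$ and, copying property (d) from the proof of Proposition \ref{prop7} (with $w(\xi)$ playing the role of $x_0(\xi)$ there, $\eta(\xi)=a_2+c_2|w(\xi)|$ and $\tau(t)=\int_0^t k(s)\,ds$), construct by induction continuous maps $\gamma_n:K\to L^1(T,H)$ together with $x_{n+1}(\xi)$, the unique $W_p(0,b)$-solution of $-y'\in A(\cdot,y)+\gamma_n(\xi)$, $y(0)=\xi$, satisfying properties $(a)$--$(d)$ of Proposition \ref{prop7} and, in addition, $\gamma_n(\xi_0)=g$ for all $n$ (hence $x_{n+1}(\xi_0)=w(\xi_0)=v$ for all $n$). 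For $n=0$ one needs a continuous selection $\gamma_0$ of $\xi\mapsto\overline{R_\epsilon(\xi)}$, where $R_\epsilon(\xi)=\{f\in S^1_{F(\cdot,w(\xi)(\cdot))}:|f(t)-g(t)|<k(t)|\xi-\xi_0|+\epsilon\text{ for a.a.\ }t\in T\}$, subject to $\gamma_0(\xi_0)=g$; here $g\in R_\epsilon(\xi_0)$ and, by the defect bound and Hu--Papageorgiou \cite[Lemma 8.3, p. 239]{23}, $\xi\mapsto\overline{R_\epsilon(\xi)}$ is lsc with closed decomposable values. At the inductive step one verifies, exactly as in (\ref{eq55})--(\ref{eq56}) using $H(F)_2(ii)$, that $g\in\overline{\Gamma_{n+1}(\xi_0)}$ (because $x_{n+1}(\xi_0)=v$, $g\in S^1_{F(\cdot,v(\cdot))}$ and $|g(t)-g(t)|=0<k(t)\beta_{n+1}(\xi_0)(t)$ off a Lebesgue-null set), so again we must select continuously from $\xi\mapsto\overline{\Gamma_{n+1}(\xi)}$ through the value $g$ at $\xi_0$. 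Once these sequences are at hand, (\ref{eq55}) and (\ref{eq57}) give $\|x_{n+1}(\xi)-x_n(\xi)\|_{C(T,H)}\le\|\gamma_n(\xi)-\gamma_{n-1}(\xi)\|_{L^1(T,H)}\le\frac{\tau(b)^n}{n!}(\|\eta(\xi)\|_1+2b\epsilon)$, and compactness of $K$ makes $\sup_{\xi\in K}\|\eta(\xi)\|_1$ finite; hence $\{x_n\}$ and $\{\gamma_n\}$ are Cauchy in $C(K,C(T,H))$ and $C(K,L^1(T,H))$, with continuous limits $\psi$ and $\gamma$ such that $\gamma(\xi)\in S^2_{F(\cdot,\psi(\xi)(\cdot))}$ and (arguing as in the proof of Theorem \ref{th6}, via Lemma \ref{lem5} and $H(F)_2(ii)$) $\psi(\xi)\in S(\xi)$. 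Since $x_n(\xi_0)=v$ for all $n$, we conclude $\psi(\xi_0)=v$.

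The main obstacle is the step invoked repeatedly above: constructing a continuous selection of an lsc, closed-decomposable-valued multifunction $\xi\mapsto\overline{\Gamma(\xi)}$ on $K$ that passes through a prescribed point $g\in\overline{\Gamma(\xi_0)}$. Replacing the value at $\xi_0$ by $\{g\}$ is not allowed, since $\xi_0$ is not isolated in $K$ and lower semicontinuity would be destroyed; instead one re-examines the proof of the Bressan--Colombo selection theorem \cite{5}, in which the selection arises as a uniform limit of ``simple'' maps $\xi\mapsto\sum_{k=0}^{N}\chi_{T_k(\xi)}(\cdot)u_k(\cdot)$, where $\{T_k(\xi)\}_{k=0}^N$ is a partition of $T$ whose endpoints depend continuously on $\xi$. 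Choosing $\xi_0$ as one of the base points with datum $u_{k_0}=g$, and arranging the partition so that the cell $T_{k_0}(\xi)$ exhausts $T$ as $\xi\to\xi_0$, Lemma \ref{lem9} is precisely what guarantees that these simple maps, hence their uniform limit, stay continuous from $K$ into $L^1(T,H)$ while acquiring the value $g$ at $\xi_0$; the uniform integrability of $\{|\gamma_n(\xi)|:\xi\in K\}$ needed to apply Lemma \ref{lem9} comes from $|\gamma_n(\xi)(t)|\le a_3(t)+c_3|x_{n+1}(\xi)(t)|$ together with the Gronwall a priori bound on $|x_n(\xi)(t)|$ (as in (\ref{eq35}), (\ref{eq38})). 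Alternatively, since $F$ is $P_{f_c}(H)$-valued, one may simply take $\gamma_n(\xi)=\mathrm{proj}_{F(\cdot,x_{n+1}(\xi)(\cdot))}(\gamma_{n-1}(\xi))$ and use continuity of the metric projection in its argument and under Hausdorff convergence of the (convex) target sets, observing that at $\xi=\xi_0$ this returns $g$ because $g(t)\in F(t,v(t))$.
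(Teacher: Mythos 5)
Your overall strategy --- an anchored Filippov-type iteration whose every approximant equals $v$ at $\xi_0$ --- is the paper's, but the two routes you offer for the crucial step fare differently. Your route (1) is, in effect, the paper's own device viewed from one level up, and as written it has a gap exactly there: the paper never produces a continuous selection of $\xi\mapsto\overline{\Gamma_{n+1}(\xi)}$ passing through $g$ at $\xi_0$ (no strengthened Bressan--Colombo theorem is invoked). Instead it abandons selections at the current $\xi$ altogether: it covers $K$ by balls $B_{\delta(\xi)}(\xi)$ with the anchoring radii $\delta(\xi)\leq\min\{2^{-3}\vartheta,|\xi-\xi_0|/2\}$ (so $\xi_0$ lies only in its own ball), takes a locally Lipschitz partition of unity $\{\eta_k\}_{k=0}^N$, and drives the next iterate by the patched forcing $\lambda_n(\xi)=\sum_{k=0}^N\chi_{T_k(\xi)}\gamma_n(\xi_k)$ built from pointwise nearest-point (Yankov--von Neumann--Aumann) selections at the finitely many anchors $\xi_k$ only; Lemma \ref{lem9} then gives continuity of $\xi\mapsto\mu_{n+1}(\xi)$, and $T_0(\xi_0)=T$ forces $\mu_n(\xi_0)=v$. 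The clause you leave as ``arranging the partition so that $T_{k_0}(\xi)$ exhausts $T$ as $\xi\to\xi_0$'' is precisely this nontrivial construction, so route (1) alone would not yet be a proof. Your route (2), by contrast, is a genuinely different and essentially complete argument, and it is the more elegant one: since $F$ is $P_{f_c}(H)$-valued, the metric projection realizes $d(\gamma_{n-1}(\xi)(t),F(t,x_n(\xi)(t)))$ exactly (no $\epsilon$-padding), fixes $g$ at $\xi_0$ automatically, and is jointly continuous in the point and in the Hausdorff-converging closed convex set; together with the $L^2$-domination from $H(F)_2(iii)$, the Gronwall bound and a subsequence/Vitali argument, this gives continuity of $\xi\mapsto\gamma_n(\xi)$ into $L^2(T,H)$, after which the standard estimates yield uniformly convergent iterates with $\psi(\xi)\in S(\xi)$ and $\psi(\xi_0)=v$; note this route does not even need compactness of $K$, which the paper uses only to extract the finite subcover. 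Two small corrections to it: the formula $\gamma_n(\xi)=\mathrm{proj}_{F(\cdot,x_{n+1}(\xi)(\cdot))}(\gamma_{n-1}(\xi))$ is circular as written (it should be $x_n(\xi)=e(\gamma_{n-1}(\xi),\xi)$ and $\gamma_n(\xi)=\mathrm{proj}_{F(\cdot,x_n(\xi)(\cdot))}(\gamma_{n-1}(\xi))$), and the projection depends on the target set only continuously (H\"older-$\frac{1}{2}$ on bounded sets), not Lipschitz-continuously, which suffices here but should be stated and proved as such.
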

\begin{proof}
	Since $v\in S(\xi_0)$, we have
	\begin{eqnarray}\label{eq61}
		-v'(t)\in A(t,v(t))+f(t)\ \mbox{for almost all}\ t\in T,\ v(0)=\xi_0,
	\end{eqnarray}
	with $f\in S^2_{F(\cdot,v(\cdot))}$. Given $g\in L^2(T,H)$, we consider the unique solution of the Cauchy problem
	\begin{equation}\label{eq62}
		-y'(t)\in A(t,y(t))+g(t)\ \mbox{for almost all}\ t\in T,\ y(0)=\xi\in H.
	\end{equation}
	
	In what follows, by $e(g,\xi)\in W_p(0,b)$ we denote the unique solution of problem (\ref{eq62}) and we set $\mu_0(\xi)=e(f,\xi)$. An easy application of the Yankov-von Neumann-Aumann selection theorem (see Hu and Papageorgiou \cite[Theorem 2.14, p. 158]{23}), gives $\gamma_0(\xi)\in L^2(T,H)$ such that
	\begin{eqnarray*}
		&\gamma_0(\xi)(t)\in F(t,\mu_0(\xi)(t))&\mbox{for almost all}\ t\in T\\
		\quad\qquad\mbox{\quad and}&\qquad |f(t)-\gamma_0(\xi)(t)|&=d(f(t),F(t,\mu_0(\xi)(t)))\\
		&&\leq k(t)|v(t)-\mu_0(\xi)(t)|\ (\mbox{see hypothesis}\ H(F)_2(ii))\\
		&&=k(t)|e(f,\xi_0)(t)-e(f,\xi)(t)|\\
		&&\leq k(t)|\xi_0-\xi|\ \mbox{for almost all}\ t\in T\ (\mbox{see (\ref{eq49})}).
	\end{eqnarray*}
	
	Let $\vartheta>0$ we define
	$$\delta(\xi)=\left\{\begin{array}{ll}
		\min\left\{2^{-3}\vartheta,\frac{|\xi-\xi_0|}{2}\right\}&\mbox{if}\ \xi\neq\xi_0\\
		2^{-3}\vartheta&\mbox{if}\ \xi=\xi_0.
	\end{array}\right.$$
	
	The family $\{B_{\delta(\xi)}(\xi)\}_{\xi\in K}$ is an open cover of the compact set $K$. So, we can find $\{\xi_k\}^N_{k=0}\subseteq K$ such that $\{B_{\delta(\xi_k)}(\xi_k)\}^N_{k=0}$ is a finite subcover of $K$. Let $\{\eta_k\}^N_{k=0}$ be a locally Lipschitz partition of unity subordinated to the finite subcover. We define
	\begin{eqnarray*}
		&&T_0(\xi)=[0,\eta_0(\xi)b]\ \mbox{and}\ T_k(\xi)=\left[\left(\overset{k-1}{\underset{\mathrm{i=0}}\sum}\eta_i(\xi)\right)b,\left(\overset{k}{\underset{\mathrm{i=0}}\sum}\eta_i(\xi)\right)(b),\right]\\
		&&\hspace{1cm}\mbox{for all}\ k\in\{1,\ldots,N\}.
	\end{eqnarray*}
	
	The endpoints in these intervals are continuous functions of $\xi$. We consider the following Cauchy problem
	\begin{equation}\label{eq63}
		-y'(t)\in A(t,y(t))+\overset{N}{\underset{\mathrm{k=0}}\sum}\chi_{T_k(\xi)}(t)\gamma_0(\xi_k)(t)\ \mbox{for almost all}\ t\in T,\ y(0)=\xi\in K.
	\end{equation}
	
	Problem (\ref{eq63}) has a unique solution $\mu_1(\xi)\in W_p(0,b)$. Let
	$$\lambda_0(\xi)(\cdot)=\overset{N}{\underset{\mathrm{k=0}}\sum}\chi_{T_k(\xi)}(\cdot)\gamma_0(\xi_k)(\cdot)\in L^2(T,H).$$
	
	Using Lemma \ref{lem9}, we can find $\hat{d}\in L^1(T)_+$ such that, for any given $\epsilon>0$, we can find $\delta>0$ for which we have
	\begin{equation}\label{eq64}
		\xi,\xi'\in K,|\xi-\xi'|_1\leq\delta\Rightarrow|\lambda_0(\xi)(t)-\lambda_0(\xi')(t)|\leq\hat{d}(t)\chi_{C}(t)\ \mbox{for almost all}\ t\in T,
	\end{equation}
	with $C\subseteq T$ measurable, $|C|_1\leq\epsilon$. We have $\mu_1(\xi')=e(\lambda_0(\xi'),\xi')$. As before, exploiting the monotonicity of $A(t,\cdot)$ (see hypothesis $H(A)_2(ii)$) and using Lemma A.5, p. 157, of Brezis \cite{6}, we have
	\begin{equation}\label{eq65}
		|\mu_1(\xi)(t)-\mu_1(\xi')(t)|\leq|\xi-\xi'|+\int^t_0|\lambda_0(\xi)(s)-\lambda_0(\xi')(s)|ds\ \mbox{for all}\ t\in T.
	\end{equation}
	
	Let $\epsilon>0$ be given. By the absolute continuity of the Lebesgue integral, we can find $\delta_1>0$ such that
	\begin{equation}\label{eq66}
		\int_C\hat{d}(s)ds\leq\frac{\epsilon}{2}\ \mbox{for all}\ C\subseteq T\ \mbox{measurable with}\ |C|_1\leq\delta_1.
	\end{equation}
	
	Also, using (\ref{eq64}), we can find $\delta\in(0,\epsilon/2)$ such that
	\begin{equation}\label{eq67}
		\xi,\xi'\in K,|\xi-\xi'|_1\leq\delta\Rightarrow|\lambda_0(\xi)(t)-\lambda_0(\xi')(t)|\leq\hat{d}(t)\chi_{C_1}(t)\ \mbox{for almost all}\ t\in T,
	\end{equation}
	with $C_1\subseteq T$ measurable, $|C_1|_1\leq\delta_1$. So, returning to (\ref{eq65}) and using (\ref{eq66}) and (\ref{eq67}), we see that
	\begin{eqnarray*}
		&&\xi,\xi'\in K,|\xi-\xi'|\leq\delta\Rightarrow|\mu_1(\xi)(t)-\mu_1(\xi')(t)|\leq\frac{\epsilon}{2}+\int^t_0\hat{d}(s)\chi_{C_1}(s)ds\leq\frac{\epsilon}{2}+\frac{\epsilon}{2}=\epsilon\\
		&&\hspace{1cm}\mbox{for all}\ t\in T.
	\end{eqnarray*}
	
	Therefore $\xi\mapsto\mu_1(\xi)$ is continuous from $H$ into $C(T,H)$. Again, with an application of the Yankov-von Neumann-Aumann selection theorem, we obtain $\gamma_1(\xi)\in L^2(T,H)$ such that
	\begin{eqnarray*}
		&&\gamma_1(\xi)(t)\in F(t,\mu_1(\xi)(t))\ \mbox{for almost all}\ t\in T,\\
		&&|\gamma_0(\xi)(t)-\gamma_1(\xi)(t)|=d(\gamma_0(\xi)(t),F(t,\mu_1(\xi)(t)))\ \mbox{for almost all}\ t\in T,\ \mbox{all}\ \xi\in K.
	\end{eqnarray*}
	
	As in the proof of Proposition \ref{prop7},  we produce inductively two sequences
	$$\{\mu_n(\xi)\}_{n\geq 0}\subseteq W_p(0,b)\ \mbox{and}\ \{\gamma_n(\xi)\}_{n\geq 0}\subseteq L^2(T,H),\ \xi\in K,$$
	which satisfy the following properties:
	\begin{itemize}
		\item[(a)] $\mu_n(\xi)=e(\lambda_{n-1}(\xi),\xi)$ with $\lambda_{n-1}(\xi)=\overset{N}{\underset{\mathrm{k=0}}\sum}\chi_{T_k(\xi)}\gamma_{n-1}(\xi_k)(t),\gamma_{-1}(\xi)=f$;
		\item[(b)] $\xi\mapsto\mu_n(\xi)$ is continuous from $K$ into $C(T,H)$;
		\item[(c)] $|\mu_n(\xi)(t)-\mu_{n-1}(\xi)(t)|\leq\frac{\vartheta}{2^{n+2}n!}\left(\int^t_0k(s)ds\right)^n$ for all $\xi\in K$;
		\item[(d)] $\gamma_n(\xi)(t)\in F(t,\mu_n(\xi)(t))$ for almost all $t\in T$ and
		$$|\gamma_{n-1}(\xi)(t)-\gamma_n(\xi)(t)|=d(\gamma_{n-1}(\xi)(t),F(t,\mu_n(\xi)(t)))\ \mbox{for almost all}\ t\in T.$$
	\end{itemize}
	
	So, as induction hypothesis, suppose that we have produced
	$$\{\mu_k(\xi)\}^n_{k=0}\subseteq W_p(0,b)\ \mbox{and}\ \{\gamma_k(\xi)\}^n_{k=0}\subseteq L^2(T,H),$$
	which satisfy properties $(a)\rightarrow(d)$ stated above. We set
	$$\mu_{n+1}(\xi)=e(\lambda_n(\xi),\xi)\ \mbox{with}\ \lambda_n(\xi)(t)=\overset{n}{\underset{\mathrm{k=0}}\sum}\chi_{T_k(\xi)}(t)\gamma_n(\xi_k)(t).$$
	
	As above (see in the first part of the proof the argument concerning the map $\xi\mapsto\mu_1(\xi)$), we can show that $\xi\mapsto\mu_{n+1}(\xi)$ is continuous from $K$ into $C(T,H)$. Also, by the monotonicity of $A(t,\cdot)$ (see hypothesis $H(A)_2(ii)$ and Lemma A.5, p. 157, of Brezis \cite{6}), we have
	\begin{eqnarray*}
		 |\mu_{n+1}(\xi)(t)-\mu_n(\xi)(t)|&\leq&\overset{n}{\underset{\mathrm{k=0}}\sum}\int^t_0\chi_{T_k(\xi)}(s)k(s)|\mu_n(\xi)(s)-\mu_{n-1}(\xi)(s)|ds\\
		&&\left(\mbox{see hypothesis}\ H(F)_2(ii)\ \mbox{and property}\right.\ (d)\\
		&&\left.\mbox{of the induction hypothesis}\right)\\
		 &\leq&\overset{n}{\underset{\mathrm{k=0}}\sum}\int^t_0\chi_{T_k(\xi)}(s)k(s)\frac{\vartheta}{2^{n+2}n!}(\int^s_0k(s)dr)^nds\\
		&&(\mbox{see property $(c)$ of the induction hypothesis})\\
		&=&\int^t_0\frac{\vartheta}{2^{n+2}(n+2)!}\frac{d}{ds}(\int^s_0k(r)dr)^{n+1}ds\\
		&=&\frac{\vartheta}{2^{n+2}(n+1)!}(\int^t_0k(s)ds)^{n+1}.
	\end{eqnarray*}
	
	Moreover, a standard measurable selection argument, produces a measurable map\\ $\gamma_{n+1}(\xi):T\rightarrow H,\ \xi\in K$, such that
	\begin{eqnarray*}
		&&\gamma_{n+1}(\xi)(t)\in F(t,\mu_{n+1}(\xi)(t))\ \mbox{for almost all}\ t\in T,\\
		&&|\gamma_n(\xi)(t)-\gamma_{n+1}(\xi)(t)|=d(\gamma_n(\xi)(t),F(t,\mu_{n+1}(\xi)(t)))\ \mbox{for almost all}\ t\in T.
	\end{eqnarray*}
	
	This completes the induction process.
	
	Note that
	$$||\gamma_{n+1}(\xi)-\mu_n(\xi)||_{C(T,H)}\leq\frac{\vartheta}{2^{n+3}}e^{||k||_1}\ \mbox{(see property)}\ (c).$$
	
	Therefore, we can say that
	\begin{equation}\label{eq68}
		\mu_n(\xi)\rightarrow\psi(\xi)\ \mbox{in}\ C(T,H)\ \mbox{as}\ n\rightarrow\infty,\ \mbox{uniformly in}\ \xi\in K.
	\end{equation}
	
	It follows that $\xi\mapsto\psi(\xi)$ is continuous from $K$ into $C(T,H)$.
	
	Note that $T_0(\xi_0)=T=[0,b]$ and so $\mu_0(\xi_0)=e(f,\xi_0)=v$ (see (\ref{eq61})). Hence $\psi(\xi_0)=v$. It remains to show that $\psi$ is a selection of the solution multifunction $\xi\mapsto S(\xi)$. By property $(d)$ and hypothesis $H(F)_2(ii)$, we have
	\begin{eqnarray}\label{eq69}
		&&|\gamma_n(\xi)(t)-\gamma_{n+1}(\xi)(t)|\leq k(t)|\mu_n(\xi)(t)-\mu_{n+1}(\xi)(t)|\ \mbox{for almost all}\ t\in T,\nonumber\\
		&\Rightarrow&\gamma_{n+1}(\xi)\rightarrow\hat{\gamma}(\xi)\ \mbox{in}\ L^2(T,H).
	\end{eqnarray}
	
	Let $\hat{\mu}(\xi)=e\left(\overset{N}{\underset{\mathrm{k=0}}\sum}\int^t_0\chi_{T_k(\xi)}(s)\hat{\gamma}(\xi_k)(s)ds,\xi\right)$. Because
	 $$\overset{N}{\underset{\mathrm{k=0}}\sum}\chi_{T_k(\xi)}\gamma_n(\xi_k)\rightarrow\overset{N}{\underset{\mathrm{k=0}}\sum}\chi_{T_k(\xi)}\hat{\gamma}(\xi_k)\ \mbox{in}\ L^2(T,H)\ (\mbox{see (\ref{eq69})}),$$
	we have
	\begin{eqnarray*}
		&&\mu_n(\xi)\rightarrow\hat{\mu}(\xi)\ \mbox{in}\ C(T,H),\\
		&\Rightarrow&\psi(\xi)=\hat{\mu}(\xi)\in S(\xi)\ \mbox{for all}\ \xi\in K\ (\mbox{see (\ref{eq68})}).
	\end{eqnarray*}
\end{proof}

\section{Optimal Control Problems}

In this section we deal with the sensitivity analysis of the optimal control problem (\ref{eq1}).

Let $Q(\xi,\lambda)\subseteq W_p(0,b)\times L^2(T,Y)$ be the admissible ``state-control" pairs. First we investigate the dependence of this set on the initial condition $\xi\in H$ and the parameter $\lambda\in E$. Recall that the control space $Y$ is a separable reflexive Banach space and the parameter space $E$ is a compact metric space. To have a useful result on the dependence of $Q(\xi,\lambda)$ on $(\xi,\lambda)\in H\times E$, we introduce the following conditions on the data of the evolution inclusion in problem (\ref{eq1}) (the dynamical constraint of the problem).

\smallskip
$H(A)_3:$ $A:T\times X\times E\rightarrow 2^{X^*}\backslash\{\emptyset\}$ is a multifunction such that
\begin{itemize}
	\item[(i)] for every $(x,\lambda)\in X\times E,\ t\mapsto A_{\lambda}(t,x)$ is graph measurable;
	\item[(ii)] for almost all $t\in T$, all $\lambda\in E$, $x\mapsto A_{\lambda}(t,x)$ is maximal monotone;
	\item[(iii)] for almost all $t\in T$, all $x\in X$, all $\lambda\in E$ and all $h^*\in A_{\lambda}(t,x)$, we have
	$$||h^*||_*\leq a_{\lambda}(t)+c_{\lambda}||x||^{p-1}$$
	with $\{a_{\lambda}\}_{\lambda\in E}\subseteq L^{p'}(T)$ bounded $\{c_{\lambda}\}_{\lambda\in E}\subseteq(0,+\infty)$ bounded and $2\leq p<\infty$;
	\item[(iv)] for almost all $t\in T$, all $x\in X$, all $\lambda\in E$ and all $h^*\in A_{\lambda}(t,x)$, we have
	$$\left\langle h^*,x\right\rangle\geq\hat{c}||x||^p-\hat{a}(t)$$
	with $\hat{c}>0,\hat{a}\in L^1(T)_+$;
	\item[(v)] if $\lambda_n\rightarrow\lambda$ in $E$, then $\frac{d}{dt}+a_{\lambda_n}\xrightarrow{PG}\frac{d}{dt}+a_{\lambda}$ as $n\rightarrow\infty$.
\end{itemize}

\smallskip
Hypotheses $H(A)_3(i)\rightarrow(iv)$ are the same as hypotheses $H(A)_2(i)\rightarrow(iv)$ for every map $A_{\lambda}$, $\lambda\in E$. The new condition is hypothesis $H(A)_3(v)$, which requires elaboration. In the examples that follow, we present characteristic situations where this hypothesis is satisfied.
\begin{ex}\label{ex11}
	$(a)$ First, we present a situation which will be used in Section 5.
	
	So, let $\Omega\subseteq\RR^N$ be a bounded domain with Lipschitz boundary $\partial\Omega$. Let $X=W^{1,p}_0(\Omega)$ $(2\leq p<\infty)$, $H=L^2(\Omega)$, $X^*=W^{-1,p'}(\Omega)$. Evidently, $(X,H,X^*)$ is an evolution triple (see Definition \ref{def1}), with compact embeddings. We consider a map $a(t,z,\xi)$ satisfying the following conditions:
	
	$H(a):$ $a:T\times\Omega\times\RR^N\rightarrow\RR^N$ is a map such that
	\begin{itemize}
		\item[(i)] $|a(t,z,0)|\leq c_0$ for almost all $(t,z)\in T\times\Omega$;
		\item[(ii)] for every $\xi\in \RR^N,(t,z)\mapsto a(t,z,\xi)$ is measurable;
		\item[(iii)] $|a(t,z,\xi_1)-a(t,z,\xi_2)|\leq\hat{c}(1+|\xi_1|+|\xi_2|)^{p-1-\alpha}|\xi_1-\xi_2|^{\alpha}$ for almost all $(t,z)\in T\times\Omega$, all $\xi_1,\xi_2\in\RR^N$, with $\hat{c}_1>0$, $\alpha\in\left(0,1\right]$;
		\item[(iv)] $(a(t,z,\xi_1)-a(t,z,\xi_2),\xi_1-\xi_2)_{\RR^N}\geq\hat{c}_2|\xi_1-\xi_2|^p$ for almost all $(t,z)\in T\times\Omega$, all $\xi_1,\xi_2\in\RR^N$, $\xi_1\neq\xi_2$, with $\hat{c}_2>0$.
	\end{itemize}
	
	We consider the operator $A:T\times X\rightarrow X^*$ defined by
	$$\left\langle A(t,x),h\right\rangle=\int_{\Omega}(a(t,z,Dx),Dh)_{\RR^N}dz\ \mbox{for all}\ (t,x,h)\in T\times X\times X.$$
	
	Using the nonlinear Green's identity (see Gasinski and Papageorgiou \cite[p. 210]{20}), we have
	$$A(t,x)=-{\rm div}\,(B(t,x)),$$
	with $B(t,x)(\cdot)=a(t,\cdot,Dx(\cdot))\in L^{p'}(\Omega,\RR^N)$ for all $(t,x)\in T\times X$.
	
	Now consider a sequence $\{a_n(t,z,\xi)\}_{n\geq 1}$ of such maps satisfying
	\begin{eqnarray*}
		&&|a_n(t,z,\xi)-a_n(s,z,\xi)|\leq\vartheta(t-s)(1+|\xi|^{p-1})\\
		&&\mbox{for almost all}\ z\in\Omega,\ \mbox{all}\ t,s\in T,\ \mbox{all}\ \xi\in\RR^N,\ \mbox{all}\ n\in\NN,
	\end{eqnarray*}
	with $\vartheta:\RR_+\rightarrow\RR_+$ being an increasing function which is continuous at $r=0$ and $\vartheta(0)=0$. We assume that for almost all $t\in T$, $a_n(t,\cdot,\cdot)\stackrel{G}{\rightarrow}a(t,\cdot,\cdot)$ in the sense of Defranceschi \cite{12}. By Svanstedt \cite{39} we have
	$$\frac{d}{dt}+a_n\xrightarrow{PG}\frac{d}{dt}+a.$$
	
	$(b)$ We can allow multivalued maps, provided that we drop the $t$-dependence. So, we consider multivalued maps $a(z,\xi)$ which satisfy the following conditions:
	
	$H(a)':$ $a:\Omega\times\RR^N\rightarrow 2^{\RR^N}\backslash\{\emptyset\}$ is a measurable map such that
	\begin{itemize}
		\item[(i)] $a(\cdot,\cdot)$ is measurable;
		\item[(ii)] for almost all $z\in\Omega,\ \xi\mapsto a(z,\xi)$ is maximal monotone;
		\item[(iii)] for almost all $z\in\Omega$, all $\xi\in\RR^N$ and all $y\in a(z,\xi)$, we have
		\begin{eqnarray*}
			&&|y|^{p'}\leq m_1(z)+\tilde{c}_1(y,\xi)\ \mbox{with}\ m_1\in L^1(\Omega),\ \tilde{c}_1(y,\xi)>0,\\
			&&|\xi|^p\leq m_2(z)+\tilde{c}_2(y,\xi)\ \mbox{with}\ m_2\in L^1(\Omega),\ \tilde{c}_2(y,\xi)>0\ (2\leq p<\infty).
		\end{eqnarray*}
	\end{itemize}
	
	We again consider the evolution triple
	$$X=W^{1,p}_0(\Omega),H=L^2(\Omega),X^*=W^{-1,p'}(\Omega)\ (2\leq p<\infty)$$
	and consider the multivalued map $A:X\rightarrow 2^{X^*}\backslash\{\emptyset\}$ defined by
	$$A(x)=\{-{\rm div}\, g:g\in S^{p'}_{a(\cdot,Dx(\cdot))}\}.$$
	
	We consider a sequence $\{a_n(z,\xi)\}_{n\geq 1}$ of such maps and assume that $a_n\stackrel{G}{\rightarrow}a$ in the sense of Defranceschi \cite{12}. Then by Denkowski, Migorski and Papageorgiou \cite{14} we have
	$$\frac{d}{dt}+a_n\xrightarrow{PG}\frac{d}{dt}+a.$$
	
	$(c)$ A third situation leading to hypothesis $H(A)_3(v)$ is the following one. We consider maps $A_{\lambda}(t,x)$ satisfying the following conditions:
	
	$H(A)'_3:$ $A:T\times X\times E\rightarrow X^*$ is a map such that
	\begin{itemize}
		\item[(i)] $||A_{\lambda}(t+\tau,x)-A_{\lambda}(t,x)||\leq O(\tau)(1+||x||^{p-1})$ for all $t,t+\tau\in T$, all $x\in X$, all $\lambda\in E$;
		\item[(ii)] for all $(t,\lambda)\in T\times E,\ x\mapsto A_{\lambda}(t,x)$ is hemicontinous;
		\item[(iii)] for all $t\in T$, all $x,u\in X$, all $\lambda\in E$, we have
		$$\left\langle A_{\lambda}(t,x)-A_{\lambda}(t,u),x-u\right\rangle\geq\tilde{c}||x-u||^p\ \mbox{with}\ \tilde{c}>0;$$
		\item[(iv)] if $\lambda_n\rightarrow\lambda$ in $E$, then for all $t\in T$, $A_{\lambda_n}(t,\cdot)\stackrel{G}{\rightarrow}A_{\lambda}(t,\cdot)$ (this means that for all $x^*\in X^*$, $A^{-1}_{\lambda_n}(t,x^*)\stackrel{w}{\rightarrow}A^{-1}_{\lambda}(t,x)$, see Denkowski, Migorski and Papageorgiou \cite[Definition 3.8.20, p. 478]{16}).
	\end{itemize}
	
	Under these conditions, by Kolpakov \cite{26}, we have
	$$\frac{d}{dt}+a_{\lambda_n}\xrightarrow{PG}\frac{d}{dt}+a_{\lambda}.$$
	\end{ex}

	Next, we introduce the conditions on the multifunctions $F$ and $G$ involved in the dynamics of (\ref{eq1}).
	
\smallskip
	$H(F)_3:$ $F:T\times H\times E\rightarrow P_{f_c}(H)$ is a multifunction such that
	\begin{itemize}
		\item[(i)] for all $(x,\lambda)\in H\times E$, $t\mapsto F(t,x,\lambda)$ is graph measurable;
		\item[(ii)] for almost all $t\in T$, all $x,y\in H$ and all $\lambda\in E$, we have
		$$h(F(t,x,\lambda),F(t,y,\lambda))\leq k(t)|x-y|\ \mbox{with}\ k\in L^1(T)_+;$$
		\item[(iii)] for almost all $t\in T$, all $x\in H$ and all $\lambda\in E$, we have
		$$|F(t,x,\lambda)|\leq a_{\lambda}(t)+c_{\lambda}|x|$$
		with $\{a_{\lambda}\}_{\lambda\in E}\subseteq L^2(T)$ and $\{c_{\lambda}\}_{\lambda\in E}\subseteq(0,+\infty)$ bounded;
		\item[(iv)] for almost all $t\in T$, all $x\in H$ and all $\lambda,\lambda'\in E$, we have
		$$h(F(t,x,\lambda),F(t,x,\lambda'))\leq\beta(d(\lambda,\lambda'))w(t,|x|)$$
		with $\beta(r)\rightarrow 0^+$ as $r\rightarrow 0^+$ and $w(t,\cdot)$ bounded on bounded sets.
	\end{itemize}
	
\smallskip
	$H(G):$ $G:T\times Y\times E\rightarrow P_{f_c}(H)$ is a multifunction such that
	\begin{itemize}
		\item[(i)] for all $(u,\lambda)\in Y\times E,\ t\mapsto G(t,u,\lambda)$ is graph measurable;
		\item[(ii)] for almost all $t\in T$, all $\lambda\in E$, $u\mapsto G(t,u,\lambda)$ is concave (that is, ${\rm Gr}\,G(t,\cdot,\lambda)\subseteq Y\times H$ is concave, see Hu and Papageorgiou \cite{23}, Definition 1.1 and Remark 1.2, p. 585) and $(u,\lambda)\mapsto G(t,u,\lambda)$ is $h$-continuous;
		\item[(iii)] for almost all $t\in T$, all $u\in U(t,\lambda)$, all $\lambda\in E$
		$$|G(t,u,\lambda)|\leq\hat{a}_{\lambda}(t),$$
		with $\{\hat{a}_{\lambda}\}_{\lambda\in E}\subseteq L^2(T)$ bounded.
	\end{itemize}

\begin{remark}
	A typical situation resulting to a concave multifunction $u\mapsto G(t,u,\lambda)$, is when
	$$G(t,u,\lambda)=B_{\lambda}(t)u+C(t,\lambda)\ \mbox{for all}\ (t,u,\lambda)\in T\times Y\times E,$$
	with $B_{\lambda}(t)\in\mathcal{L}(Y,H)$ and $C(t,\lambda)\in P_{f_c}(H)$ for all $(t,\lambda)\in T\times E$.

Another situation, leading to the concavity of $G(t,\cdot,\lambda)$, is when $H$ is an ordered Hilbert space and $g_{\lambda},\tilde{g}_{\lambda}:T\times Y\rightarrow H$ are two Carath\'eodory maps such that for almost all $t\in T$
$$g_{\lambda}(t,\cdot)\ \mbox{is order convex and}\ \tilde{g}_{\lambda}(t,\cdot)\ \mbox{is order concave}.$$

We set $G(t,u,\lambda)=\{h\in H:g_{\lambda}(t,u)\leq h\leq \tilde{g}_{\lambda}(t,u)\}$. Then $G(t,\cdot,\lambda)$ is concave.
\end{remark}

Finally we impose conditions on the control constraint $U(t,\lambda)$.

\smallskip
$H(U):$ $U:T\times E\rightarrow P_{f_c}(Y)$ is a multifunction such that
\begin{itemize}
	\item[(i)] for all $\lambda\in E,t\mapsto  U(t,\lambda)$ is graph measurable;
	\item[(ii)] for almost all $t\in T,\ \lambda\mapsto U(t,\lambda)$ is $h$-continuous;
	\item[(iii)] $|U(t,\lambda)|\leq\tilde{a}_{\lambda}(t)$ for almost all $t\in T$, all $\lambda\in E$, with $\{\tilde{a}_{\lambda}\}_{\lambda\in E}\subseteq L^2(T)$ bounded.
\end{itemize}
\begin{prop}\label{prop12}
	If hypotheses $H(A)_3,H(F)_3,H(G),H(U)$ hold and $(\xi_n,\lambda_n)\rightarrow(\xi,\lambda)$ in $H\times E$, then
	\begin{eqnarray*}
		&&K_{seq}(s\times w)-\limsup\limits_{n\rightarrow\infty}Q(\xi_n,\lambda_n)\subseteq Q(\xi,\lambda)\ \mbox{in}\ L^p(T,H)\times L^2(T,Y),\\
		&&K(s\times s)-\liminf\limits_{n\rightarrow\infty}Q(\xi_n,\lambda_n)\supseteq Q(\xi,\lambda)\ \mbox{in}\ C(T,H)\times L^2(T,Y).
	\end{eqnarray*}
\end{prop}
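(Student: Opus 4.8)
The plan is to establish the two inclusions separately: the first is a closedness (upper Kuratowski limit) statement, the second a lower semicontinuity / selection statement. For the first, let $(x,u)\in K_{seq}(s\times w)-\limsup_{n\to\infty}Q(\xi_n,\lambda_n)$, so after passing to a subsequence there are $(x_n,u_n)\in Q(\xi_n,\lambda_n)$ with $x_n\to x$ in $L^p(T,H)$ and $u_n\stackrel{w}{\to}u$ in $L^2(T,Y)$. Write $-x_n'=h_n^*+f_n+g_n$ with $h_n^*\in a_{\lambda_n}(x_n)$, $f_n(t)\in F(t,x_n(t),\lambda_n)$, $g_n(t)\in G(t,u_n(t),\lambda_n)$ and $u_n(t)\in U(t,\lambda_n)$. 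Using $H(A)_3(iii),(iv)$, $H(F)_3(iii)$, $H(G)(iii)$, $H(U)(iii)$ together with Proposition \ref{prop2} and Gronwall's inequality (all estimates uniform in $\lambda\in E$), exactly as in the proof of Theorem \ref{th6} with $f_n+g_n$ in the role of $f_n$, I would obtain that $\{x_n\}$ is bounded in $W_p(0,b)$ and $\{h_n^*\},\{f_n\},\{g_n\}$ are bounded in $L^{p'}(T,X^*)$, $L^2(T,H)$, $L^2(T,H)$ respectively; hence, along a further subsequence, $x_n\stackrel{w}{\to}x$ in $W_p(0,b)$ (compatible with \eqref{eq4}), $x_n(t)\stackrel{w}{\to}x(t)$ in $H$ for all $t$ and $x_n(t)\to x(t)$ in $H$ for a.a.\ $t$, $h_n^*\stackrel{w}{\to}h^*$ in $L^{p'}(T,X^*)$, $f_n\stackrel{w}{\to}f$, $g_n\stackrel{w}{\to}g$ in $L^2(T,H)$, and $x(0)=\xi$ (since $x_n(0)=\xi_n\to\xi$).

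It then remains to identify the limits. For $F$: $H(F)_3(ii)$ and $H(F)_3(iv)$ give $h(F(t,x_n(t),\lambda_n),F(t,x(t),\lambda))\le k(t)|x_n(t)-x(t)|+\beta(d(\lambda_n,\lambda))w(t,|x(t)|)\to0$ for a.a.\ $t$, hence $w\text{-}\limsup_n F(t,x_n(t),\lambda_n)\subseteq F(t,x(t),\lambda)$ a.e., and the convergence theorem used in the proof of Theorem \ref{th6} (see \eqref{eq34}) yields $f(t)\in\overline{\mathrm{conv}}\,w\text{-}\limsup_n F(t,x_n(t),\lambda_n)\subseteq F(t,x(t),\lambda)$ a.e. For the controls, since $u_n\stackrel{w}{\to}u$ only weakly, convexity is the crucial ingredient: $S^2_{U(\cdot,\lambda)}$ is closed and convex, hence weakly closed, and the concavity of $G(t,\cdot,\lambda)$ (convexity of ${\rm Gr}\,G(t,\cdot,\lambda)$, $H(G)(ii)$) makes the set $\{(v,w)\in L^2(T,Y)\times L^2(T,H):w(t)\in G(t,v(t),\lambda)\ \text{a.e.}\}$ convex and strongly closed, hence weakly closed; a Mazur-type argument combined with the joint $h$-continuity of $U(t,\cdot)$ and of $(u,\lambda)\mapsto G(t,u,\lambda)$ and the bounds $H(U)(iii)$, $H(G)(iii)$ then gives $u(t)\in U(t,\lambda)$ and $g(t)\in G(t,u(t),\lambda)$ a.e. Finally, for the $A$-term I would show, as in \eqref{eq29}--\eqref{eq30} in the proof of Theorem \ref{th6}, that $\limsup_n((h_n^*,x_n-x))\le0$ (from Proposition \ref{prop2}, $\xi_n\to\xi$, and $((f_n+g_n,x_n-x))\to0$ by \eqref{eq4}), and then invoke hypothesis $H(A)_3(v)$ together with this estimate to conclude $h^*\in a_\lambda(x)$; passing to the limit in $-x_n'=h_n^*+f_n+g_n$ gives $-x'=h^*+f+g$, so $(x,u)\in Q(\xi,\lambda)$.

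For the second inclusion, fix $(x,u)\in Q(\xi,\lambda)$, so $-x'=h^*+f+g$ with $h^*\in a_\lambda(x)$, $f\in S^2_{F(\cdot,x(\cdot),\lambda)}$, $g\in S^2_{G(\cdot,u(\cdot),\lambda)}$, $u\in S^2_{U(\cdot,\lambda)}$, $x(0)=\xi$; I must produce, for every $n$, a pair $(x_n,u_n)\in Q(\xi_n,\lambda_n)$ with $x_n\to x$ in $C(T,H)$ and $u_n\to u$ in $L^2(T,Y)$. Using the $h$-continuity of $U(t,\cdot)$ I take the metric-projection selection $u_n(t)\in U(t,\lambda_n)$ of $u(t)$, so $|u_n(t)-u(t)|\le h(U(t,\lambda_n),U(t,\lambda))\to0$ a.e., whence $u_n\to u$ in $L^2(T,Y)$ by $H(U)(iii)$ and Vitali's theorem; similarly the $h$-continuity of $G$ in $(u,\lambda)$ and $H(G)(iii)$ give a selection $g_n(t)\in G(t,u_n(t),\lambda_n)$ with $|g_n(t)-g(t)|=d(g(t),G(t,u_n(t),\lambda_n))\to0$ a.e.\ and $g_n\to g$ in $L^2(T,H)$. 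It then suffices to find $x_n\in W_p(0,b)$ solving $-x_n'\in A_{\lambda_n}(t,x_n)+F(t,x_n,\lambda_n)+g_n(t)$, $x_n(0)=\xi_n$, with $x_n\to x$ in $C(T,H)$, for then $(x_n,u_n)\in Q(\xi_n,\lambda_n)$; note that $x$ itself solves the same problem with data $(A_\lambda,F(\cdot,\cdot,\lambda),g,\xi)$. I would obtain $x_n$ by the iterative scheme of Propositions \ref{prop7}--\ref{prop8} anchored at $x$: set $x_n^0$ equal to the unique solution of $-y'\in A_{\lambda_n}(t,y)+(f+g_n)(t)$, $y(0)=\xi_n$, and continue with successive $h$-Lipschitz selections of $F(\cdot,x_n^m(\cdot),\lambda_n)$ close to the previous forcing, exactly as in the proof of Proposition \ref{prop8}; the geometric estimates there show the inner iteration converges in $C(T,H)$ to the desired $x_n$, uniformly enough in $n$ to conclude.

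The step I expect to be the main obstacle is the base case of this anchored iteration, since only \emph{weak} $PG$-convergence $\frac{d}{dt}+a_{\lambda_n}\xrightarrow{PG}\frac{d}{dt}+a_\lambda$ is available, whereas I need $x_n^0\to x$ in $C(T,H)$. This is where $H(A)_3(v)$ is used, together with $f+g_n\to f+g$ strongly in $L^2(T,H)\hookrightarrow L^{p'}(T,X^*)$ (recall $p\ge2$) and $\xi_n\to\xi$ in $H$: $PG$-convergence first yields $x_n^0\stackrel{w}{\to}x$ in $W_p(0,b)$, then $x_n^0\to x$ in $L^p(T,H)$ by the compact embedding \eqref{eq4}, and finally one upgrades to $C(T,H)$-convergence via the energy identity of Proposition \ref{prop2}, estimating $\tfrac12|x_n^0(t)-x(t)|^2$ and using that the $A_{\lambda_n}$-pairings drop by maximal monotonicity, exactly as in the last part of the proof of Theorem \ref{th6}; the same mechanism propagates through the iteration, the uniformity in $n$ coming from $H(A)_3$ being uniform over $\lambda\in E$. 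The reduction from a general $\xi\in H$ to regular initial data is handled, as in Theorem \ref{th6}, by density of $X$ in $H$ and the Lipschitz dependence \eqref{eq49} of solutions on the initial condition.
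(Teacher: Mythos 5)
Your overall skeleton (a priori bounds, extraction of weak limits, identification of the $F$-limit via the convergence theorem together with $H(F)_3(ii),(iv)$, and strong approximation of $u$ and $g$ for the lower inclusion) matches the paper's, but at the three points where the $\lambda$-dependence of the data is the real issue, the mechanisms you propose do not work. First, in the upper inclusion you claim to get $h^*\in a_{\lambda}(x)$ from $\limsup_n((h^*_n,x_n-x))\leq 0$ ``together with $H(A)_3(v)$''. $PG$-convergence is a statement about the solution maps $h^*\mapsto e_n(h^*)$ for a frozen right-hand side; it contains no graph-convergence or cross-operator pseudomonotonicity information, and monotonicity gives no sign for $\left\langle h^*_n-h^*,x_n-x\right\rangle$ when $h^*_n\in A_{\lambda_n}(t,x_n(t))$ but $h^*\in A_{\lambda}(t,x(t))$; Lemma \ref{lem5} is a fixed-operator result and does not apply. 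The paper never identifies $h^*$: it introduces the auxiliary solutions $y_m$ of $-y'\in A_{\lambda_m}(t,y)+f+g$, $y(0)=\xi$ (see (\ref{eq79})), uses $H(A)_3(v)$ only to identify their weak limit as the solution of (\ref{eq81}), and then compares $x_m$ with $y_m$ --- same operator $A_{\lambda_m}$, so monotonicity does apply --- to conclude $x=y$. Second, your Mazur/convex-graph argument for the control term is valid only for a fixed parameter: the set $\{(v,w):w(t)\in G(t,v(t),\lambda)\ \mbox{a.e.}\}$ is convex and strongly closed, hence weakly closed, but your pairs satisfy $g_n(t)\in G(t,u_n(t),\lambda_n)$, and since $u_n$ converges only weakly you cannot control $h(G(t,u_n(t),\lambda_n),G(t,u_n(t),\lambda))$ from the joint $h$-continuity in $H(G)(ii)$ (no uniform-in-$u$ modulus in $\lambda$ is assumed), so the reduction to the fixed-$\lambda$ set is unjustified. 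This is exactly why the paper passes to support functions and invokes Balder's strong--weak semicontinuity theorem, exploiting the concavity of $u\mapsto\sigma(h(t),G(t,u,\lambda))$.

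For the lower inclusion you correctly sensed that the change of operator is the main obstacle, but the fix you propose is the step that fails: in $\frac{1}{2}|z_n(t)-x(t)|^2=-\int^t_0\left\langle h^*_n(s)-h^*(s),z_n(s)-x(s)\right\rangle ds$ the selections belong to different operators ($A_{\lambda_n}$ at $z_n$, $A_{\lambda}$ at $x$), so ``the $A_{\lambda_n}$-pairings drop by maximal monotonicity'' is not available, and $H(A)_3(v)$ by itself yields only $z_n\stackrel{w}{\rightarrow}x$ in $W_p(0,b)$, hence strong convergence in $L^p(T,H)$ by (\ref{eq4}), not in $C(T,H)$; the argument ``as in the last part of the proof of Theorem \ref{th6}'' again uses a single fixed $A$. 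Consequently both the base case and the propagation of your anchored iteration are unproven. The paper's route is different: it approximates $u$ and $g$ exactly as you do, but then handles $\xi_n\rightarrow\xi$ and the passage from $(F(\cdot,\cdot,\lambda),g)$ to $(F(\cdot,\cdot,\lambda_n),g_n)$ with the fixed-operator tools prepared for this purpose --- the continuous selection through the preassigned solution $x$ (Proposition \ref{prop10}), giving $y_n=\psi(\xi_n)\rightarrow x$ in $C(T,H)$, combined with the parametric Filippov--Gronwall estimate (Proposition \ref{prop8}), giving $||x_n-y_n||_{C(T,H)}\rightarrow 0$. What is missing from your plan, and what you would have to supply to make it work, is a genuine mechanism converting the weak, solution-based hypothesis $H(A)_3(v)$ into strong $C(T,H)$ stability across the varying operators $A_{\lambda_n}$; without it, the identification of the $A$-term in the first inclusion and the $C(T,H)$-convergence in the second remain open.
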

\begin{proof}
	Let $(x,u)\in K_{seq}(s\times w)-\limsup\limits_{n\rightarrow\infty}Q(\xi_n,\lambda_n)$. By definition (see Section 2), we can find a subsequence $\{m\}$ of $\{n\}$ and $(x_m,u_m)\in Q(\xi_m,\lambda_m)$, $m\in\NN$ such that
	\begin{equation}\label{eq70}
		x_m\rightarrow x\ \mbox{in}\ L^p(T,H)\ \mbox{and}\ u_m\stackrel{w}{\rightarrow}u\ \mbox{in}\ L^2(T,Y)\ \mbox{as}\ m\rightarrow \infty.
	\end{equation}
	
	For every $m\in\NN$, we have
	\begin{equation}\label{eq71}
		-x'_m(t)\in A_{\lambda_m}(t,x_m(t))+f_m(t)+g_m(t)\ \mbox{for almost all}\ t\in T,\ x_m(0)=\xi_m,
	\end{equation}
	with $f_m,g_m\in L^2(T,H)$ such that
	\begin{equation}\label{eq72}
		f_m(t)\in F(t,x_m(t),\lambda_m)\ \mbox{and}\ g_m(t)\in G(t,u_m(t),\lambda_m)\ \mbox{for almost all}\ t\in T.
	\end{equation}
	
	We deduce by hypotheses $H(F)_3(iii),H(G)(iii)$ and Theorem \ref{th6} and its proof that
	$$\{x_m\}_{m\in\NN}\subseteq W_p(0,b)\ \mbox{is bounded and}\ \{x_m\}_{m\in\NN}\subseteq C(T,H)\ \mbox{is relatively compact.}$$
	
	So, from (\ref{eq70}) we obtain
	\begin{equation}\label{eq73}
		x_m\stackrel{w}{\rightarrow}x\ \mbox{in}\ W_p(0,b)\ \mbox{and}\ x_m\rightarrow x\ \mbox{in}\ C(T,H)\ \mbox{as}\ m\rightarrow\infty.
	\end{equation}
	
	By (\ref{eq72}) and hypotheses $H(F)_3(iii),H(G)(iii)$ it is clear that
	$$\{f_m\}_{m\in\NN},\{g_m\}_{m\in\NN}\subseteq L^2(T,H)\ \mbox{are bounded}.$$
	
	Hence, we may assume (at least for a subsequence), that
	\begin{equation}\label{eq74}
		f_m\stackrel{w}{\rightarrow}f\ \mbox{and}\ g_m\stackrel{w}{\rightarrow}g\ \mbox{in}\ L^2(T,H)\ \mbox{as}\ m\rightarrow\infty.
	\end{equation}
	
	Proposition 6.6.33 on p. 521 of Papageorgiou and Kyritsi \cite{34}, implies that
	\begin{equation}\label{eq75}
		f(t)\in\overline{\rm conv}\, w-\limsup\limits_{m\rightarrow\infty}F(t,x_m(t),\lambda_m)\ \mbox{for all}\ t\in T\backslash N,\ |N|_1=0.
	\end{equation}
	
	Fix $t\in T\backslash N$ and let $y\in w-\limsup\limits_{m\rightarrow\infty}F(t,x_m(t),\lambda_m)$. By definition, we know that there exists a subsequence $\{k\}$ of $\{m\}$ and $y_k\in F(t,x_k(t),\lambda_k)$ for all $k\in\NN$ such that $y_k\stackrel{w}{\rightarrow}y$ in $H$ as $k\rightarrow\infty$. The function $v\mapsto d(v,F(t,x(t),\lambda))$ is continuous and convex, hence weakly lower semicontinuous. Therefore
	\begin{equation}\label{eq76}
		d(y,F(t,x(t),\lambda))\leq\liminf\limits_{k\rightarrow\infty}d(y_k,F(t,x(t),\lambda)).
	\end{equation}
	
	On the other hand, we have
	\begin{equation}\label{eq77}
		d(y_k,F(t,x(t),\lambda))\leq h(F(t,x_k(t),\lambda_k),F(t,x(t),\lambda)).
	\end{equation}
	
	Using hypotheses $H(F)_3(ii),(iv)$, we have
	\begin{eqnarray*}
		&&h(F(t,x_k(t),\lambda_k),F(t,x(t),\lambda))\\
		&\leq&h(F(t,x_k(t),\lambda_k),F(t,x(t),\lambda_k))+h(F(t,x(t),\lambda_k),F(t,x(t),\lambda))\\
		&\leq&k(t)|x_k(t)-x(t)|+\beta(d(\lambda_k,\lambda))w(t,|x(t)|),\\
		\Rightarrow&&h(F(t,x_k(t),\lambda_k),F(t,x(t),\lambda))\rightarrow 0\ \mbox{as}\ k\rightarrow\infty\ (\mbox{see (\ref{eq73})}).
	\end{eqnarray*}
	
	Then we obtain from (\ref{eq76}) and (\ref{eq77}) that
	\begin{eqnarray*}
		&&d(y,F(t,x(t),\lambda))=0,\\
		&\Rightarrow &y\in F(t,x(t),\lambda),\\
		&\Rightarrow&w-\limsup\limits_{m\rightarrow\infty}F(t,x_m(t),\lambda_m)\subseteq F(t,x(t),\lambda)\ \mbox{for all}\ t\in T\backslash N,\ |N|_1=0,\\
		&\Rightarrow&f(t)\in F(t,x(t),\lambda)\ \mbox{for all}\ t\in T\backslash N,\ |N|_1=0\\
		&&\mbox{(see (\ref{eq75}) and recall that $F$ is convex-valued).}
	\end{eqnarray*}
	
	Next, for each $m\in\NN$, we have
	$$g_m\in S^2_{G(\cdot,u_m(\cdot),\lambda_m)}.$$
	
	Let $h\in L^2(T,H)$ and let $(\cdot,\cdot)_{L^2(T,H)}$ denote the inner product of $L^2(T,H)$ (recall that $L^2(T,H)^*=L^2(T,H)$). Then
	\begin{eqnarray}\label{eq78}
		 &&(h,g_m)_{L^2(T,H)}\leq\sigma(h,S^2_{G(\cdot,u_m(\cdot),\lambda_m)})=\int^b_0\sigma(h(t),G(t,u_m(t),\lambda_m))dt\\
		&&(\mbox{see Papageorgiou and Kyritsi \cite[Theorem 6.4.16, p. 492]{34}}).\nonumber
	\end{eqnarray}
	
	The concavity of $G(t,\cdot,\lambda)$ (see hypothesis $H(G)(ii)$), implies that the function $u\mapsto\sigma(h(t),G(t,u,\lambda))$ is concave. Since $E$ is a complete metric space,  it can be isometrically embedded, by the Arens-Eells theorem (see Gasinski and Papageorgiou \cite[Theorem 4.143, p. 655]{21}), as a closed subset of a separable Banach space (recall that $E$ is compact). So, by Balder \cite{3}, we have
	\begin{eqnarray*}
		 &&\limsup\limits_{m\rightarrow\infty}\int^b_0\sigma(h(t),F(t,u_m(t),\lambda_m))dt\leq\int^b_0\sigma(h(t),F(t,u(t),\lambda))dt\\
		&&(\mbox{see hypothesis}\ H(G)(ii)),\\
		 &\Rightarrow&\limsup\limits_{m\rightarrow\infty}\sigma(h,S^2_{G(\cdot,u_m(\cdot),\lambda_m)})\leq\sigma(h,S^2_{G(\cdot,u(\cdot),\lambda)}),\\
		&\Rightarrow&(h,g)_{L^2(T,H)}\leq\sigma(h,S^2_{G(\cdot,u(\cdot),\lambda)})\ (\mbox{see (\ref{eq74}), (\ref{eq78})}).
	\end{eqnarray*}
	
	Since $h\in L^2(T,H)$ is arbitrary, it follows that
	\begin{eqnarray*}
		&&g\in S^2_{G(\cdot,u(\cdot),\lambda)},\\
		&\Rightarrow&g(t)\in G(t,u(t),\lambda)\ \mbox{for almost all}\ t\in T.
	\end{eqnarray*}
	
	Let $y_m\in W_p(0,b)$ be the unique solution of the Cauchy problem
	\begin{equation}\label{eq79}
		-y'_m(t)\in A_{\lambda_m}(t,y_m(t))+f(t)+g(t)\ \mbox{for almost all}\ t\in T,\ y_m(0)=\xi.
	\end{equation}
	
	Hypothesis $H(A)_3(v)$ implies that
	\begin{equation}\label{eq80}
		y_m\stackrel{w}{\rightarrow}y\ \mbox{in}\ W_p(0,b)
	\end{equation}
	with $y\in W_p(0,b)$ being the unique solution of the Cauchy problem
	\begin{equation}\label{eq81}
		-y'(t)\in A_{\lambda}(t,y(t))+f(t)+g(t)\ \mbox{for almost all}\ t\in T,\ y(0)=\xi
	\end{equation}
	(see Section 2). From (\ref{eq71}) and (\ref{eq79}) and the monotonicity of $A_{\lambda_m}(t,\cdot)$ (see hypothesis $H(A)_3(ii)$), we have
	\begin{eqnarray*}
		&&\left\langle x'_m(t)-y'_m(t),x_m(t)-y_m(t)\right\rangle\leq(f(t)+g(t)-f_m(t)-g_m(t),x_m(t)-y_m(t))\\
		&&\mbox{for almost all}\ t\in T,\\
		 &\Rightarrow&\frac{1}{2}|x_m(t)-y_m(t)|^2\leq\frac{1}{2}|\xi_m-\xi|^2+\int^t_0(f(s)+g(s)-f_m(s)-g_m(s),x_m(s)-y_m(s))ds\\
		&&\mbox{for all}\ t\in T\ (\mbox{see Proposition \ref{prop2}})\\
		&\Rightarrow&||x_m-y_m||_{C(T,H)}\rightarrow 0\ \mbox{as}\ m\rightarrow\infty,\\
		&\Rightarrow&x=y\ (\mbox{see (\ref{eq73}), (\ref{eq80})}).
	\end{eqnarray*}
	
	Recalling that
	$$f(t)\in F(t,x(t),\lambda)\ \mbox{and}\ g(t)\in G(t,u(t),\lambda)\ \mbox{for almost all}\ t\in T,$$
it follows	from (\ref{eq81}) that
	\begin{eqnarray*}
		&&(x,u)\in Q(\xi,\lambda),\\
		&\Rightarrow&K_{seq}(s\times w)-\limsup\limits_{n\rightarrow\infty}Q(\xi_n,\lambda_n)\subseteq Q(\xi,\lambda)\ \mbox{in}\ L^p(T,H)\times L^2(T,Y).
	\end{eqnarray*}
	
	Next, we will prove the second convergence of the proposition.
	
	So, let $(x,u)\in Q(\xi,\lambda)$. By definition we have
	$$-x'(t)\in A_{\lambda}(t,x(t))+F(t,x(t),\lambda)+g(t)\ \mbox{for almost all}\ t\in T,\ x(0)=\xi,$$
	with $g\in L^2(T,H)$ satisfying
	$$g(t)\in G(t,u(t),\lambda)\ \mbox{for almost all}\ t\in T.$$
	
	For every $v\in L^2(T,Y)$, we have
	\begin{eqnarray*}
		&&d(v,S^2_{U(\cdot,\lambda_n)})=\int^b_0d(v(t),U(t,\lambda_n))dt,\\
		&&(\mbox{see Papageorgiou and Kyritsi \cite[Theorem 6.4.16, p. 492]{34}}).
	\end{eqnarray*}
	
	Hypothesis $H(U)(ii)$ and the dominated convergence theorem imply that
	\begin{eqnarray*}
		&&\int^b_0d(v(t),U(t,\lambda_n))dt\rightarrow\int^b_0d(v(t),U(t,\lambda))dt,\\
		&\Rightarrow&d(v,S^2_{U(\cdot,\lambda_n)})\rightarrow d(v,S^2_{U(\cdot,\lambda)}).
	\end{eqnarray*}
	
	Hence Proposition 6.6.22 on p. 518 of Papageorgiou and Kyritsi \cite{34} implies that we can find $u_n\in S^2_{U(\cdot,\lambda_n)}$ ($n\in\NN$) such that
	$$u_n\rightarrow u\ \mbox{in}\ L^2(T,Y)\ \mbox{as}\ n\rightarrow\infty\,.$$
	
	Then hypothesis $H(G)(ii)$ guarantees that we can find
	\begin{equation}\label{eq82}
		g_n\in L^2(T,H),g_n(t)\in G(t,u_n(t),\lambda_n)\ \mbox{for almost all}\ t\in T,\ \mbox{all}\ n\in\NN,
	\end{equation}
	such that
	$$g_n\rightarrow g\ \mbox{in}\ L^2(T,H)\ \mbox{as}\ n\rightarrow\infty.$$
	
	Given $\xi'\in H$, let $S(\xi')\subseteq W_p(0,b)$ be the set of solutions of the Cauchy problem
	$$-y'(t)\in A_{\lambda_n}(t,y(t))+F(t,y(t),\lambda)+g(t)\ \mbox{for almost all}\ t\in T,\ y(0)=\xi'.$$
	
	Let $K=\{\xi_n,\xi\}_{n\geq 1}\subseteq H$. This is a compact set in $H$. Invoking Proposition \ref{prop10} (with $\xi_0=\xi$), we produce a continuous map $\psi:K\rightarrow C(T,H)$ such that
	\begin{equation}\label{eq83}
		\hat{y}=\psi(\hat{\xi})\in S(\hat{\xi})\ \mbox{for all}\ \hat{\xi}\in H,\ \psi(\xi)=x.
	\end{equation}
	
	Let $y_n=\psi(\xi_n)$ ($n\in\NN$) and use Proposition \ref{prop8} to find $x_n\in W_p(0,b)$ solution of the Cauchy problem
	$$-x'_n(t)\in A_{\lambda_n}(t,x_n(t))+F(t,x_n(t),\lambda_n)+g_n(t)\ \mbox{for almost all}\ t\in T,\ x_n(0)=\epsilon_n,$$
	for which we have
	\begin{equation}\label{eq84}
		|x_n(t)-y_n(t)|\leq b\epsilon e^{\tau(t)}+\int^t_0\eta_n(s)e^{\tau(t)-\tau(s)}ds\ \mbox{for all}\ t\in T,
	\end{equation}
	with $\epsilon>0$, $\tau(t)=\int^t_0k(s)ds,\ \eta_n\in L^1(T),\ \eta_n\rightarrow 0$ in $L^1(T)$.
	
	So, we obtain
	$$\limsup\limits_{n\rightarrow\infty}||x_n-y_n||_{C(T,H)}\leq b\epsilon e^{\tau(b)}\ \mbox{(see (\ref{eq84})).}$$
	
	Since $\epsilon>0$ is arbitrary, it follows that
	\begin{equation}\label{eq85}
		||x_n-y_n||_{C(T,H)}\rightarrow 0\ \mbox{as}\ n\rightarrow\infty .
	\end{equation}
	
	Finally, we have
	\begin{eqnarray*}
		||x_n-x||_{C(T,H)}&\leq&||x_n-y_n||_{C(T,H)}+||y_n-x||_{C(T,H)}\\
		&=&||x_n-y_n||_{C(T,H)}+||\psi(\xi_n)-\psi(\xi)||_{C(T,H)}\ (\mbox{see (\ref{eq83})})\\
		&\Rightarrow&||x_n-x||_{C(T,H)}\rightarrow 0\ (\mbox{see (\ref{eq85}) and recall that}\ \psi(\cdot)\ \mbox{is continuous}).
	\end{eqnarray*}
	
	Since $(x_n,u_n)\in Q(\xi_n,\lambda_n)$ ($n\in\NN$) and $u_n\rightarrow u$ in $L^2(T,Y)$, we conclude that
	$$Q(\xi,\lambda)\subseteq K(s\times s)-\liminf\limits_{n\rightarrow\infty}Q(\xi_n,\lambda_n)\ \mbox{in}\ C(T,H)\times L^2(T,Y).$$
\end{proof}

An immediate consequence of the above proposition is the following corollary concerning the multifunction $(\xi,\lambda)\mapsto Q(\xi,\lambda)$ of admissible state-control pairs.
\begin{corollary}\label{cor13}
	If hypotheses $H(A)_3,H(F)_3,H(G),H(U)$ hold, then the multifunction $Q:H\times E\rightarrow 2^{C(T,H)\times L^2(T,Y)}\backslash\{\emptyset\}$ is lsc and sequentially closed in $C(T,H)\times L^2(T,Y)_w$ (that is, ${\rm Gr}\,Q\subseteq H\times E\times C(T,H)\times L^2(T,Y)_w$ is sequentially closed).
\end{corollary}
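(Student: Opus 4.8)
The plan is to deduce both assertions directly from Proposition \ref{prop12}, once we record the nonemptiness of the values and translate the two Kuratowski-limit inclusions into the language of lower semicontinuity and graph closedness.

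First I would check that $Q(\xi,\lambda)\neq\emptyset$ for every $(\xi,\lambda)\in H\times E$. By hypotheses $H(U)(i),(iii)$ the multifunction $t\mapsto U(t,\lambda)$ is graph measurable with an $L^2$-bound, so $S^2_{U(\cdot,\lambda)}\neq\emptyset$; pick $u\in S^2_{U(\cdot,\lambda)}$. Then $t\mapsto G(t,u(t),\lambda)$ is graph measurable (composition with a measurable map) and, by $H(G)(iii)$, has an $L^2$-integrable bound, hence admits a selection $g\in S^2_{G(\cdot,u(\cdot),\lambda)}$. For this fixed $\lambda$, the map $A_\lambda$ satisfies $H(A)_1$ (hypotheses $H(A)_3(i)$--$(iv)$ coincide with $H(A)_2(i)$--$(iv)$, and as noted in the remark after $H(A)_2$ these imply $H(A)_1$, the graph condition being automatic for maximal monotone maps), while $(t,x)\mapsto F(t,x,\lambda)+g(t)$ satisfies $H(F)_1$ (the $h$-Lipschitz estimate $H(F)_3(ii)$ yields, exactly as in the remark following $H(F)_2$, a graph sequentially closed in $H\times H_w$, and adding the fixed $L^2$ function $g$ preserves this together with the growth bound $H(F)_3(iii)$). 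Hence Theorem \ref{th6} applies and produces a solution $x\in W_p(0,b)$, so $(x,u)\in Q(\xi,\lambda)$.

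Next, lower semicontinuity. Since $H\times E$ and $C(T,H)\times L^2(T,Y)$ are metrizable, $Q(\cdot,\cdot)$ is lsc (as a map into $C(T,H)\times L^2(T,Y)$) if and only if it is sequentially lsc, that is, for every $(\xi_n,\lambda_n)\to(\xi,\lambda)$ and every $(x,u)\in Q(\xi,\lambda)$ there are $(x_n,u_n)\in Q(\xi_n,\lambda_n)$ with $(x_n,u_n)\to(x,u)$ in $C(T,H)\times L^2(T,Y)$; equivalently $Q(\xi,\lambda)\subseteq K(s\times s)$-$\liminf_nQ(\xi_n,\lambda_n)$ in $C(T,H)\times L^2(T,Y)$. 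This is precisely the second inclusion of Proposition \ref{prop12}, so $Q(\cdot,\cdot)$ is lsc.

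Finally, sequential closedness of the graph. Suppose $(\xi_n,\lambda_n)\to(\xi,\lambda)$ in $H\times E$ and $(x_n,u_n)\in Q(\xi_n,\lambda_n)$ with $x_n\to x$ in $C(T,H)$ and $u_n\stackrel{w}{\to}u$ in $L^2(T,Y)$. Because $T=[0,b]$ is bounded, $\|x_n-x\|_{L^p(T,H)}^p\leq b\,\|x_n-x\|_{C(T,H)}^p\to0$, so $x_n\to x$ strongly in $L^p(T,H)$; together with $u_n\stackrel{w}{\to}u$ in $L^2(T,Y)$ this gives $(x,u)\in K_{seq}(s\times w)$-$\limsup_nQ(\xi_n,\lambda_n)$ in $L^p(T,H)\times L^2(T,Y)$, and the first inclusion of Proposition \ref{prop12} yields $(x,u)\in Q(\xi,\lambda)$. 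Thus ${\rm Gr}\,Q$ is sequentially closed in $H\times E\times C(T,H)\times L^2(T,Y)_w$. There is no real obstacle here beyond matching the topologies correctly: all the analytic substance — the a priori estimates, the limit passage through the $L$-pseudomonotone operator, the $PG$-convergence of $\tfrac{d}{dt}+a_{\lambda_n}$, and the Filippov–Gronwall type approximation of Propositions \ref{prop8} and \ref{prop10} — has already been carried out inside Proposition \ref{prop12}.
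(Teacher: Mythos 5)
Your proof is correct and follows exactly the route the paper intends: the paper states Corollary \ref{cor13} as an immediate consequence of Proposition \ref{prop12} without further argument, and your write-up simply makes explicit the routine translations (sequential characterization of lsc via the $K(s\times s)$-$\liminf$ inclusion, graph closedness via the $K_{seq}(s\times w)$-$\limsup$ inclusion together with $C(T,H)\hookrightarrow L^p(T,H)$, and nonemptiness via measurable selections plus Theorem \ref{th6}, as the paper itself does inside the proof of Proposition \ref{prop12}).
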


Now we bring the cost functional into the picture. The hypotheses on the integrands $L(t,x,\lambda)$ and $H(t,u,\lambda)$ are the following.

\smallskip
$H(L):$ $L:T\times H\times E\rightarrow\RR$ is an integrand such that
\begin{itemize}
	\item[(i)] for every $(x,\lambda)\in H\times E$, $t\mapsto L(t,x,\lambda)$ is measurable;
	\item[(ii)] if $\lambda_n\rightarrow \lambda$ in $E$, then for all $x\in H$ we have $L(\cdot,x,\lambda_n)\stackrel{w}{\rightarrow}L(\cdot,x,\lambda)$ in $L^1(T)$;
	\item[(iii)] for almost all $t\in T$, all $x,y\in H$ and all $\lambda\in E$, we have
	$$|L(t,x,\lambda)-L(t,y,\lambda)|=(1+|x|\vee|y|)\rho(t,|x-y|),$$
	where $|x|\vee|y|=\max\{|x|,|y|\}$ and $\rho(t,r)$ is a Carath\'eodory function on $T\times\RR_+$ with values in $(0,+\infty)$ such that
	$$\rho(t,0)=0\ \mbox{for almost all}\ t\in T\ \mbox{and}\ \sup[\rho(t,r):0\leq r\leq\vartheta]\leq\beta_{\vartheta}(t)\ \mbox{for almost all}\ t\in T,$$
	with $\beta_{\vartheta}\in L^1(T)_+,\vartheta>0$.
\end{itemize}

\smallskip
$H(H):$ $H:T\times Y\times E\rightarrow\RR$ is an integrand such that
\begin{itemize}
	\item[(i)] for all $(u,\lambda)\in Y\times E,\ t\mapsto H(t,u,\lambda)$ is measurable;
	\item[(ii)] for almost all $t\in T$, all $v\in E$, $u\mapsto H(t,u,\lambda)$ is convex and for almost all $t\in T$, all $u\in Y$, $\lambda\mapsto H(t,u,\lambda)$ is continuous;
	\item[(iii)] for almost all $t\in T$ and all $(u,\lambda)\in Y\times E$, we have
	$$H(t,u,\lambda)\leq a(t)(1+||u||^2_Y),$$
	with $a\in L^{\infty}(T)$.
\end{itemize}

\smallskip
$H(\hat{\psi}):$ $\hat{\psi}:H\times E\rightarrow\RR$ is a continuous function.

\smallskip
Using the direct method of the calculus of variations, we can produce optimal admissible state-control pairs for problem (\ref{eq1}).

\begin{prop}\label{prop14}
	If hypotheses $H(A)_3,H(F)_3,H(G),H(U),H(L),H(H)$ and $H(\hat{\psi})$ hold, then for every $(\xi,\lambda)\in H\times E$ we can find $(x^*,u^*)\in Q(\xi,\lambda)$ such that
	$$J(x^*,u^*,\xi,\lambda)=m(\xi,\lambda).$$
\end{prop}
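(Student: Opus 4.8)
The plan is to apply the direct method of the calculus of variations. Fix $(\xi,\lambda)\in H\times E$. First I would check that the admissible set $Q(\xi,\lambda)$ is nonempty: this follows from Theorem~\ref{th6} (applied with the parametrized data, which satisfy the hypotheses of that theorem for each fixed $\lambda$) together with the fact that the control constraint set $S^2_{U(\cdot,\lambda)}$ is nonempty by $H(U)$ and that $G(\cdot,u(\cdot),\lambda)$ admits an $L^2$-selection by $H(G)(iii)$. So pick a minimizing sequence $\{(x_n,u_n)\}_{n\ge 1}\subseteq Q(\xi,\lambda)$ with $J(x_n,u_n,\xi,\lambda)\downarrow m(\xi,\lambda)$. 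The next step is to show $m(\xi,\lambda)>-\infty$, which is immediate since $L$ and $H$ are bounded below on the relevant bounded sets (by $H(L)(iii)$ controlled by the a~priori bound $|x_n(t)|\le M$ from Theorem~\ref{th6}, by $H(H)$ via boundedness of $\|u_n\|$, and by continuity of $\hat\psi$).

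Second I would extract a convergent subsequence. By the a~priori estimates in the proof of Theorem~\ref{th6} (using $H(A)_3(iii),(iv)$ and $H(F)_3(iii)$, $H(G)(iii)$), the sequence $\{x_n\}$ is bounded in $W_p(0,b)$, hence relatively compact in $C(T,H)$ by the compact embedding \eqref{eq4}; and $\{u_n\}$ is bounded in $L^2(T,Y)$ by $H(U)(iii)$, hence (passing to a subsequence, $Y$ being reflexive) weakly convergent. So we may assume $x_n\to x^*$ in $C(T,H)$ (and weakly in $W_p(0,b)$) and $u_n\xrightarrow{w}u^*$ in $L^2(T,Y)$. By Corollary~\ref{cor13} (equivalently, the first inclusion of Proposition~\ref{prop12} with constant sequences $\xi_n\equiv\xi$, $\lambda_n\equiv\lambda$), the pair $(x^*,u^*)$ lies in $Q(\xi,\lambda)$; here one uses that the Mazur-type/weak-$\limsup$ argument of Proposition~\ref{prop12} identifies the limit selections $f^*\in S^2_{F(\cdot,x^*(\cdot),\lambda)}$ and $g^*\in S^2_{G(\cdot,u^*(\cdot),\lambda)}$, the latter crucially exploiting the concavity of $G(t,\cdot,\lambda)$ from $H(G)(ii)$.

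Third, lower semicontinuity of the cost along this subsequence. Write $J(x_n,u_n,\xi,\lambda)=\int_0^b L(t,x_n(t),\lambda)\,dt+\int_0^b H(t,u_n(t),\lambda)\,dt+\hat\psi(\xi,x_n(b),\lambda)$. For the first term, $H(L)(iii)$ gives $|L(t,x_n(t),\lambda)-L(t,x^*(t),\lambda)|\le(1+|x_n(t)|\vee|x^*(t)|)\rho(t,|x_n(t)-x^*(t)|)$; since $x_n\to x^*$ uniformly in $H$, the moduli $|x_n(t)|\vee|x^*(t)|$ stay uniformly bounded and $\rho(t,|x_n(t)-x^*(t)|)\to 0$ pointwise a.e.\ and is dominated by $\beta_\vartheta\in L^1(T)$, so by dominated convergence $\int_0^b L(t,x_n(t),\lambda)\,dt\to\int_0^b L(t,x^*(t),\lambda)\,dt$. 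For the second term, $u\mapsto H(t,u,\lambda)$ is convex and (by $H(H)(iii)$) bounded above by an $L^\infty$ coefficient times $1+\|u\|_Y^2$, so the integral functional $u\mapsto\int_0^b H(t,u(t),\lambda)\,dt$ is convex and strongly lower semicontinuous on $L^2(T,Y)$, hence weakly lower semicontinuous (by a standard theorem, e.g.\ Balder's convexity criterion as used in the proof of Proposition~\ref{prop12}); thus $\liminf_n\int_0^b H(t,u_n(t),\lambda)\,dt\ge\int_0^b H(t,u^*(t),\lambda)\,dt$. For the third term, $x_n(b)\to x^*(b)$ in $H$ and $\hat\psi(\xi,\cdot,\lambda)$ is continuous by $H(\hat\psi)$, so $\hat\psi(\xi,x_n(b),\lambda)\to\hat\psi(\xi,x^*(b),\lambda)$. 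Combining, $J(x^*,u^*,\xi,\lambda)\le\liminf_n J(x_n,u_n,\xi,\lambda)=m(\xi,\lambda)$, and since $(x^*,u^*)\in Q(\xi,\lambda)$ the reverse inequality is automatic, giving $J(x^*,u^*,\xi,\lambda)=m(\xi,\lambda)$.

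The main obstacle is the third step, specifically establishing membership $(x^*,u^*)\in Q(\xi,\lambda)$ of the weak limit — i.e.\ that one can pass to the limit in the evolution inclusion. This is exactly what Proposition~\ref{prop12} (hence Corollary~\ref{cor13}) delivers, so the work there carries the weight: the $L$-pseudomonotonicity of $a_\lambda$ (Lemma~\ref{lem5}) to absorb the $A$-term, the weak-$\limsup$ argument with the Lipschitz modulus from $H(F)_3(ii)$ to handle $F$, and the concavity of $G(t,\cdot,\lambda)$ together with Balder's lower-closure theorem to handle the control term $g_n\xrightarrow{w}g^*$. Once that is granted, the remaining lower-semicontinuity estimates for the cost are routine.
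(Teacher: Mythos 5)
Your proposal is correct and follows essentially the same route as the paper: minimizing sequence, compactness of the admissible trajectories from Theorem~\ref{th6} together with $H(U)$ for the controls, closedness of $Q(\xi,\lambda)$ under the limit via Proposition~\ref{prop12} (with constant $(\xi_n,\lambda_n)$), and then convergence of the $L$-term by $H(L)(iii)$ and dominated convergence, weak lower semicontinuity of the convex $H$-term (Balder), and continuity of $\hat{\psi}$. One small slip: the relative compactness of $\{x_n\}$ in $C(T,H)$ does not follow from boundedness in $W_p(0,b)$ and the compact embedding \eqref{eq4} alone (that only gives $L^p(T,H)$-compactness); it comes from the solution-set compactness argument in the proof of Theorem~\ref{th6}, which is what the paper invokes.
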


\begin{proof}
	Let $\{(x_n,u_n)\}_{n\geq 1}\subseteq Q(\xi,\lambda)$ be a minimizing sequence for problem (\ref{eq1}). So, we have
	$$J(x_n,u_n,\xi,\lambda)\downarrow m(\xi,\lambda)\ \mbox{as}\ n\rightarrow\infty.$$
	
	Theorem \ref{th6} and hypothesis $H(U)$ imply that
	$$\{(x_n,u_n)\}_{n\geq 1}\subseteq W_p(0,b)\times L^2(T,Y)\ (\mbox{respectively,}\ \subseteq C(T,H)\times L^2(T,Y))$$
	is relatively $w\times w$-compact (respectively, $s\times w$-compact). So, by the Eberlein-Smulian theorem and by passing to a suitable subsequence if necessary, we can say that
	\begin{equation}\label{eq86}
		x_n\stackrel{w}{\rightarrow}x^*\ \mbox{in}\ W_p(0,b),\ x_n\rightarrow x^*\ \mbox{in}\ C(T,H),\ u_n\stackrel{w}{\rightarrow}u^*\ \mbox{in}\ L^2(T,Y).
	\end{equation}
	
	Then (\ref{eq86}) and Proposition \ref{prop12} imply that
	\begin{equation}\label{eq87}
		(x^*,u^*)\in Q(\xi,\lambda).
	\end{equation}
	
	Also, (\ref{eq86}), hypothesis $H(L)(iii)$ and the dominated convergence theorem, imply that
	\begin{equation}\label{eq88}
		\int^b_0L(t,x_n(t),\lambda)dt\rightarrow\int^b_0L(t,x^*(t),\lambda)dt.
	\end{equation}
	In addition, as before (see the proof of Proposition \ref{prop12}), using Theorem 2.1 of Balder \cite{3}, obtain
	\begin{equation}\label{eq89}
		\int^b_0H(t,u^*(t),\lambda)dt\leq\liminf\limits_{n\rightarrow\infty}\int^b_0H(t,u_n(t),\lambda)dt.
	\end{equation}
	
	Finally, (\ref{eq86}) and hypothesis $H(\hat{\psi})$ imply that
	\begin{equation}\label{eq90}
		\hat{\psi}(\xi,x_n(b),\lambda)\rightarrow\hat{\psi}(\xi,x^*(b),\lambda).
	\end{equation}
	
	We deduce from (\ref{eq87}), (\ref{eq88}), (\ref{eq89}), (\ref{eq90}) that
	$$J(x^*,u^*,\xi,\lambda)=m(\xi,\lambda)\ \mbox{with}\ (x^*,u^*)\in Q(\xi,\lambda).$$
This concludes the proof.
\end{proof}

We are now ready for the main sensitivity results concerning problem (\ref{eq1}). The first one establishes the Hadamard well-posedness of the problem.
\begin{theorem}\label{th15}
	If hypotheses $H(A)_3,\ H(F)_3,\ H(G),\ H(L),\ H(H)$ and $H(\hat{\psi})$ hold, then the value function $m:H\times E\rightarrow\RR$ of problem (\ref{eq1}) is continuous.
\end{theorem}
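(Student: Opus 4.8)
The plan is to establish that $m$ is simultaneously lower and upper semicontinuous on $H\times E$; since Proposition \ref{prop14} already guarantees that $m$ is real-valued and attained at every point, this yields the asserted continuity. Throughout I consider a sequence $(\xi_n,\lambda_n)\rightarrow(\xi,\lambda)$ in $H\times E$ and use the following a priori facts: for any admissible pairs $(x_n,u_n)\in Q(\xi_n,\lambda_n)$, hypotheses $H(F)_3(iii),H(G)(iii),H(U)(iii)$ together with Theorem \ref{th6} and the compactness argument in its proof give that $\{x_n\}$ is bounded in $W_p(0,b)$ and relatively compact in $C(T,H)$, while $\{u_n\}$ is bounded in $L^2(T,Y)$; hence, along a subsequence, $x_n\stackrel{w}{\rightarrow}x^*$ in $W_p(0,b)$, $x_n\rightarrow x^*$ in $C(T,H)$ (so $x_n(b)\rightarrow x^*(b)$ in $H$ and $x^*(T)$ is compact in $H$), and $u_n\stackrel{w}{\rightarrow}u^*$ in $L^2(T,Y)$.

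\textbf{Lower semicontinuity.} After passing to a subsequence with $m(\xi_n,\lambda_n)\rightarrow\mu:=\liminf_{n\to\infty}m(\xi_n,\lambda_n)$, I take, by Proposition \ref{prop14}, optimal pairs $(x_n,u_n)\in Q(\xi_n,\lambda_n)$ realizing $m(\xi_n,\lambda_n)=J(x_n,u_n,\xi_n,\lambda_n)$ and extract the subsequence above; the first inclusion of Proposition \ref{prop12} then yields $(x^*,u^*)\in Q(\xi,\lambda)$. Now I pass to the limit termwise in $J$. For the running state cost I split the difference into $I_n^{(1)}=\big|\int_0^b[L(t,x_n(t),\lambda_n)-L(t,x^*(t),\lambda_n)]\,dt\big|$ and $I_n^{(2)}=\big|\int_0^b[L(t,x^*(t),\lambda_n)-L(t,x^*(t),\lambda)]\,dt\big|$. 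By $H(L)(iii)$, $I_n^{(1)}\le\int_0^b(1+|x_n(t)|\vee|x^*(t)|)\,\rho(t,|x_n(t)-x^*(t)|)\,dt\rightarrow0$ by the dominated convergence theorem, since $\|x_n-x^*\|_{C(T,H)}\rightarrow0$, $\rho(t,0)=0$, and the integrand is dominated by $(1+\vartheta)\beta_\vartheta\in L^1(T)$ for a fixed $\vartheta$ bounding the uniform norms of all $x_n$ and of $x^*$. For $I_n^{(2)}$ (the genuinely new point) I uniformly approximate $x^*$ on $T$ by a simple $H$-valued function $x_\epsilon$ — possible because $x^*(T)$ is compact in $H$ — apply the weak $L^1$-convergence of $H(L)(ii)$ to each of the finitely many constant values of $x_\epsilon$ (tested against the characteristic functions of the pieces), and control $\int_0^b|L(t,x^*(t),\lambda_n)-L(t,x_\epsilon(t),\lambda_n)|\,dt$ uniformly in $n$ by $H(L)(iii)$; letting $\epsilon\downarrow0$ gives $I_n^{(2)}\rightarrow0$, hence $\int_0^bL(t,x_n(t),\lambda_n)\,dt\rightarrow\int_0^bL(t,x^*(t),\lambda)\,dt$. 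For the control cost, repeating the argument of Proposition \ref{prop14} (using $H(H)(ii),(iii)$, the Arens--Eells embedding of the compact metric space $E$ into a separable Banach space, and Theorem 2.1 of Balder \cite{3}) gives $\liminf_{n\to\infty}\int_0^bH(t,u_n(t),\lambda_n)\,dt\ge\int_0^bH(t,u^*(t),\lambda)\,dt$; and $\hat\psi(\xi_n,x_n(b),\lambda_n)\rightarrow\hat\psi(\xi,x^*(b),\lambda)$ by $H(\hat\psi)$. Summing, $\mu=\lim_{n\to\infty}J(x_n,u_n,\xi_n,\lambda_n)\ge J(x^*,u^*,\xi,\lambda)\ge m(\xi,\lambda)$, which is lower semicontinuity.

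\textbf{Upper semicontinuity.} By Proposition \ref{prop14} I pick $(x^*,u^*)\in Q(\xi,\lambda)$ with $J(x^*,u^*,\xi,\lambda)=m(\xi,\lambda)$. The second inclusion of Proposition \ref{prop12} provides recovery pairs $(x_n,u_n)\in Q(\xi_n,\lambda_n)$ with $x_n\rightarrow x^*$ in $C(T,H)$ and $u_n\rightarrow u^*$ \emph{strongly} in $L^2(T,Y)$, whence $m(\xi_n,\lambda_n)\le J(x_n,u_n,\xi_n,\lambda_n)$. The $L$-term and the terminal term pass to the limit exactly as in the previous paragraph. For the $H$-term, the strong convergence $u_n\rightarrow u^*$ gives (along a subsequence) $u_n(t)\rightarrow u^*(t)$ in $Y$ a.e., with $\|u_n(t)\|_Y$ dominated by a fixed function in $L^2(T)$; convexity of $H(t,\cdot,\lambda)$ together with the upper bound $H(H)(iii)$ forces local Lipschitz continuity of $H(t,\cdot,\lambda)$ in $u$ with a constant uniform in $\lambda$, which, combined with the continuity in $\lambda$ from $H(H)(ii)$, yields $H(t,u_n(t),\lambda_n)\rightarrow H(t,u^*(t),\lambda)$ a.e.; since $H(t,u_n(t),\lambda_n)\le a(t)(1+\|u_n(t)\|_Y^2)$ with the right-hand sides converging in $L^1(T)$, a Vitali / generalized dominated convergence argument gives $\int_0^bH(t,u_n(t),\lambda_n)\,dt\rightarrow\int_0^bH(t,u^*(t),\lambda)\,dt$. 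Therefore $\limsup_{n\to\infty}m(\xi_n,\lambda_n)\le J(x^*,u^*,\xi,\lambda)=m(\xi,\lambda)$, and $m$ is continuous.

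The step I expect to be the main obstacle is the limit of the running state cost $\int_0^bL(t,x_n(t),\lambda_n)\,dt$: the state perturbation and the parameter perturbation live in genuinely different topologies — uniform convergence in $C(T,H)$ for the former, only weak $L^1$-convergence of $t\mapsto L(t,x,\lambda_n)$ (for fixed $x$) for the latter — so they must be decoupled by a triangle inequality and a simple-function approximation of $x^*$, with the equi-integrable majorant furnished by $H(L)(iii)$ providing the domination that annihilates the remaining error terms. Everything else is either delivered by Proposition \ref{prop12} (admissibility of the limit pair; existence of recovery sequences) or is the Balder-type semicontinuity of the convex control integral already established in the proof of Proposition \ref{prop14}.
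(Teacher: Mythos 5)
Your proof is correct and follows essentially the same route as the paper: upper semicontinuity via the recovery pairs furnished by the second inclusion of Proposition \ref{prop12}, lower semicontinuity via optimal pairs at $(\xi_n,\lambda_n)$, the a priori compactness, the first inclusion of Proposition \ref{prop12} and Balder's theorem, with the running-cost term split by the same triangle inequality and simple-function approximation as in the paper's claim (\ref{eq92}). The only deviations are harmless technical variants: you obtain the simple approximation of $x^*$ from compactness of its range in $H$ and conclude by dominated convergence where the paper invokes the Scorza--Dragoni and Egorov theorems, and you treat the control integral in the upper-semicontinuity step by pointwise local Lipschitz estimates plus a Vitali argument instead of the equi-local-Lipschitz property of the integral functionals $\Phi(\cdot,\lambda)$ used in the paper.
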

\begin{proof}
	Let $(\xi_n,\lambda_n)\rightarrow(\xi,\lambda)$ in $H\times E$. Let $(x,u)\in Q(\xi,\lambda)$ such that
	$$J(x,u,\xi,\lambda)=m(\xi,\lambda)\ (\mbox{see Proposition \ref{prop14}}).$$
	
	Invoking Proposition \ref{prop12}, we can find $(x_n,u_n)\in Q(\xi_n,\lambda_n)$ for all $n\in\NN$ such that
	\begin{equation}\label{eq91}
		x_n\rightarrow x\ \mbox{in}\ C(T,H)\ \mbox{and}\ u_n\rightarrow u\ \mbox{in}\ L^2(T,Y).
	\end{equation}
	
	We claim that
	\begin{equation}\label{eq92}
		\left|\int^b_0L(t,x_n(t),\lambda_n)dt-\int^b_0L(t,x(t),\lambda)dt\right|\rightarrow 0\ \mbox{as}\ n\rightarrow\infty.
	\end{equation}
	
	To this end note that
	\begin{eqnarray}\label{eq93}
		&&\left|\int^b_0L(t,x_n(t),\lambda_n)dt-\int^b_0L(t,x(t),\lambda)dt\right|\nonumber\\
		 &&\leq\left|\int^b_0L(t,x_n(t),\lambda_n)dt-\int^b_0L(t,x(t),\lambda_n)dt\right|+\left|\int^b_0L(t,x(t),\lambda_n)dt-\int^b_0L(t,x(t),\lambda)dt\right|\\
		&&\hspace{1cm}\mbox{for all}\ n\in\NN\,.\nonumber
	\end{eqnarray}
	
	First, we estimate the first summand in the right-hand side of (\ref{eq93}). Using hypothesis $H(L)(iii)$, we have
	\begin{eqnarray*}
		&&\left|\int^b_0L(t,x_n(t),\lambda_n)-L(t,x(t),\lambda_n)dt\right|\\
		&\leq&\int^b_0(1+|x_n(t)|\vee|x(t)|)\rho(t,|x_n(t)-x(t)|)dt.
	\end{eqnarray*}
	
	Let $M=\sup\limits_{n\geq 1}||x_n||_{C(T,H)}<+\infty$ (see (\ref{eq91})). Then
	\begin{eqnarray}\label{eq94}
		 &&\left|\int^b_0L(t,x_n(t),\lambda_n)dt-\int^b_0L(t,x(t),\lambda_n)dt\right|\leq(1+M)\int^b_0\rho(t,|x_n(t)-x(t)|)dt\rightarrow 0\\
		&& \mbox{as}\ n\rightarrow\infty\ (\mbox{see (\ref{eq91}) and hypothesis}\ H(L)(iii)).\nonumber
	\end{eqnarray}
	
	Next, we estimate the second term on the right-hand side of (\ref{eq93}).
	
	Let $\vartheta>2||x||_{C(T,H)}$ and let $\beta_{\vartheta}\in L^1(T)_+$ as postulated by hypothesis $H(L)(iii)$. Given $\epsilon>0$, we can find $\delta>0$ such that
	\begin{center}
		``if $C\subseteq T$ is measurable with $|C|_1\leq\delta$,\\
		\begin{equation}\label{eq95}
			\mbox{then}\ \int_C\beta_{\vartheta}(t)dt\leq\frac{\epsilon}{2(1+\vartheta)}."
		\end{equation}
	\end{center}
	
	Here, we use the absolute continuity of the Lebesgue integral. Invoking the Scorza-Dragoni theorem (see Papageorgiou and Kyritsi \cite[Theorem 6.2.9, p. 471]{34}), we can find $T_1\subseteq T$ closed with $|T\backslash T_1|\leq\frac{\delta}{2}$ and $\rho|_{T_1\times \RR_+}$ is continuous. Since $\rho(t,0)=0$, we can find $\delta_1>0$ such that
	\begin{equation}\label{eq96}
		``\mbox{if}\ r\in[0,\delta_1],\ \mbox{then}\ |\rho(t,r)|\leq\frac{\epsilon}{2b(1+\vartheta)}\ \mbox{for all}\ t\in T_1."
	\end{equation}
	
	Recall that simple functions are dense in $L^p(T,H)$. Using this fact, the property that $L^p(T,H)$-convergence implies pointwise convergence for almost all $t\in T$ for at least a subsequence and invoking Egorov's theorem, we can find $T_2\subseteq T$ closed and $s:T\rightarrow H$ a simple function such that
	\begin{equation}\label{eq97}
		||s||_{\infty}\leq||x||_{C(T,H)},\ |T\backslash T_2|_1\leq\frac{\delta}{2}\ \mbox{and}\ |x(t)-s(t)|\leq\delta_1\ \mbox{for all}\ t\in T_2.
	\end{equation}
	
	We set $T_3=T_1\cap T_2$. This is a closed subset of $T$ with $|T\backslash T_3|_1\leq\delta$. We have
	\begin{eqnarray}\label{eq98}
		&&\left|\int^b_0L(t,x(t),\lambda_n)dt-\int^b_0L(t,s(t),\lambda_n)dt\right|\nonumber\\
		&\leq&(1+||x||_{C(T,H)})\int^b_0\rho(t,|x(t)-s(t)|)dt\ (\mbox{see hypothesis}\ H(L)(iii)\ \mbox{and (\ref{eq97})})\nonumber\\
		&\leq&(1+\vartheta)\left[\int_{T_3}\rho(t,|x(t)-s(t)|)dt+\int_{T\backslash T_3}\rho(t,|x(t)-s(t)|)dt\right]\nonumber\\
		&\leq&\frac{\epsilon}{2}+\frac{\epsilon}{2}=\epsilon\ (\mbox{see (\ref{eq95}), (\ref{eq96}), (\ref{eq97})}).
	\end{eqnarray}
	
	Similarly, we show that
	\begin{equation}\label{eq99}
		\left|\int^b_0L(t,s(t),\lambda)dt-\int^b_0L(t,x(t),\lambda)dt\right|\leq\epsilon\,.
	\end{equation}
	
	Let $s(t)=\overset{N}{\underset{\mathrm{k=1}}\sum}v_{k}\chi_{C_k}(t)$ with $v_k\in H,C_k\subseteq T$ measurable. Using hypothesis $H(L)(ii)$, we can find $n_0\in\NN$ such that
	\begin{eqnarray}\label{eq100}
		 &&\left|\int^b_0(L(t,s(t),\lambda_n)-L(t,s(t),\lambda))dt\right|\leq\overset{N}{\underset{\mathrm{k=1}}\sum}\left|\int_{C_k}(L(t,v_k,\lambda_n)-L(t,v_k,\lambda))dt\right|\leq\epsilon\\
		&&\mbox{for all}\ n\geq n_0.\nonumber
	\end{eqnarray}
	
	It follows from (\ref{eq98}), (\ref{eq99}), (\ref{eq100}) that
	$$\int^b_0L(t,x(t),\lambda_n)dt\rightarrow\int^b_0L(t,x(t),\lambda)\ \mbox{as}\ n\rightarrow\infty.$$
	
	This convergence and (\ref{eq94}) imply that (\ref{eq92}) (our claim) is true.
	
	Next, we consider the integral functional
	$$\Phi(u,\lambda)=\int^b_0H(t,u(t),\lambda)dt\ \mbox{for all}\ (u,\lambda)\in L^2(T,Y)\times E.$$
	
	For every $\lambda\in E,u\mapsto\Phi(u,\lambda)$ is convex (see hypothesis $H(H)(ii)$). Also, hypothesis $H(H)$ (iii) implies that in a neighborhood of every $u\in L^2(T,Y)$
	$$\{\Phi(\cdot,\lambda)\}_{\lambda\in E}\ \mbox{is equibounded above},$$
	hence
	$$\{\Phi(\cdot,\lambda)\}_{\lambda\in E}\ \mbox{is equi-locally Lipschitz}$$
	(see Papageorgiou and Kyritsi \cite[Theorem 1.2.3, p. 13]{34}). Therefore it follows that
	\begin{equation}\label{eq101}
		\Phi(u_n,\lambda_n)\rightarrow\Phi(u,\lambda)\ \mbox{as}\ n\rightarrow\infty\ (\mbox{see (\ref{eq91})}).
	\end{equation}
	
	Finally, (\ref{eq91}) and hypothesis $H(\hat{\psi})$ imply that
	\begin{equation}\label{eq102}
		\hat{\psi}(\xi_n,x_n(b),\lambda_n)\rightarrow\hat{\psi}(\xi,x(b),\lambda).
	\end{equation}
	
	By (\ref{eq92}), (\ref{eq101}), (\ref{eq102}), we have
	\begin{eqnarray}\label{eq103}
		&&\int^b_0L(t,x_n(t),\lambda_n)dt+\int^b_0H(t,u_n(t),\lambda_n)dt+\hat{\psi}(\xi_n,x_n(b),\lambda_n)\nonumber\\
		&&\hspace{5cm}\rightarrow J(x,u,\xi,\lambda)=m(\xi,\lambda),\nonumber\\
		&\Rightarrow&\limsup_{n\rightarrow\infty}m(\xi_n,\lambda_n)\leq m(\xi,\lambda).
	\end{eqnarray}
	
	From Proposition \ref{prop14} we know that for every $n\in\NN$, we can find $(x_n,u_n)\in Q(\xi_n,\lambda_n)$ such that
	\begin{equation}\label{eq104}
		J(x_n,u_n,\xi_n,\lambda_n)=m(\xi_n,\lambda_n).
	\end{equation}
	
	As in the proof of Theorem \ref{th6}, we can show that $\{x_n\}_{n\geq 1}\subseteq W_p(0,b)$ is bounded. In addition, hypothesis $H(U)$ implies that $\{u_n\}_{n\geq 1}\subseteq L^2(T,Y)$ is bounded. So, by passing to a suitable subsequence if necessary, we may assume that
	\begin{equation}\label{eq105}
		x_n\stackrel{w}{\rightarrow}x\ \mbox{in}\ W_p(0,b)\ \mbox{and}\ u_n\stackrel{w}{\rightarrow}u\ \mbox{in}\ L^2(T,Y)\ \mbox{as}\ n\rightarrow\infty\,.
	\end{equation}
	
	By (\ref{eq105}) and (\ref{eq4}), we also have
	\begin{equation}\label{eq106}
		x_n\rightarrow x\ \mbox{in}\ L^p(T,H)\ \mbox{as}\ n\rightarrow\infty\,.
	\end{equation}
	
	Then (\ref{eq105}), (\ref{eq106}) and Proposition \ref{prop12} imply that
	$$(x,u)\in Q(\xi,\lambda).$$
	
	Moreover, reasoning as in the proof of Theorem \ref{th6}, we show that
	\begin{eqnarray}\label{eq107}
		&&\{x_n\}_{n\geq 1}\subseteq C(T,H)\ \mbox{is relatively compact},\nonumber\\
		&\Rightarrow&x_n\rightarrow x\ \mbox{in}\ C(T,H)\ (\mbox{see (\ref{eq106})}).
	\end{eqnarray}
	
	By (\ref{eq107}) and the first part of the proof, we have
	$$\int^b_0L(t,x_n(t),\lambda_n)dt\rightarrow\int^b_0L(t,x(t),\lambda)dt.$$
	
	In addition, (\ref{eq105}) and hypotheses $H(H)(ii),\ H(\hat{\psi})$ imply
	\begin{eqnarray*}
		&&\int^b_0H(t,u(t),\lambda)dt\leq\liminf\limits_{n\rightarrow \infty}\int^b_0H(t,u_n(t),\lambda_n)dt\ (\mbox{see Balder \cite{3}})\\
		&&\hat{\psi}(\xi_n,x_n(b),\lambda_n)\rightarrow\hat{\psi}(\xi,x(b),\lambda).
	\end{eqnarray*}
	
	Therefore, from (\ref{eq104}) we see that
	\begin{eqnarray}\label{eq108}
		 &&\int^b_0L(t,x(t),\lambda)dt+\int^b_0H(t,u(t),\lambda)dt+\hat{\psi}(\xi,x(b),\lambda)\leq\liminf\limits_{n\rightarrow\infty}m(\xi_n,\lambda_n),\nonumber\\
		&\Rightarrow&m(\xi,\lambda)\leq\liminf\limits_{n\rightarrow\infty}m(\xi_n,\lambda_n).
	\end{eqnarray}
	
	We infer from (\ref{eq103}) and (\ref{eq108}) that
	\begin{eqnarray*}
		&&m(\xi_n,\lambda_n)\rightarrow m(\xi,\lambda),\\
		&\Rightarrow&m:H\times E\rightarrow\RR\ \mbox{is continuous.}
	\end{eqnarray*}
\end{proof}

For every $(\xi,\lambda)\in H\times E$, we introduce the set $\Sigma(\xi,\lambda)$ of optimal state-control pairs, that is,
$$\Sigma(\xi,\lambda)=\{(x,u)\in Q(\xi,\lambda):J(x,u,\xi,\lambda)=m(\xi,\lambda)\}.$$

By Proposition \ref{prop14}, we know that for every $(\xi,\lambda)\in H\times E$, $\Sigma(\xi,\lambda)\neq\emptyset$. For this multifunction we can prove the following useful continuity property.

\begin{theorem}\label{th16}
	If hypotheses $H(A)_3,\ H(F)_3,\ H(U),\ H(L),\ H(H)$ and $H(\hat{\psi})$ hold, then the multifunction $\Sigma:H\times E\rightarrow 2^{C(T,H)\times L^2(T,Y)}\backslash\{\emptyset\}$ is sequentially usc into $C(T,H)\times L^2(T,Y)_w$.
\end{theorem}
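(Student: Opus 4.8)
The plan is to establish two facts about $\Sigma$ and then combine them in a standard way: (i) the graph of $\Sigma$ is sequentially closed in $H\times E\times C(T,H)\times L^2(T,Y)_w$, and (ii) over a neighbourhood of any point of $H\times E$ the range of $\Sigma$ lies in a sequentially compact subset of $C(T,H)\times L^2(T,Y)_w$. Granting (i) and (ii), sequential upper semicontinuity follows by the usual contradiction argument: if for some $\tau_{seq}$-open $U$ the set $\Sigma^+(U)$ failed to be open, there would be $(\xi_n,\lambda_n)\to(\xi,\lambda)$ with $(\xi,\lambda)\in\Sigma^+(U)$ and $(x_n,u_n)\in\Sigma(\xi_n,\lambda_n)$, $(x_n,u_n)\notin U$; by (ii) a subsequence converges in $C(T,H)\times L^2(T,Y)_w$ to some $(x,u)$, by (i) $(x,u)\in\Sigma(\xi,\lambda)\subseteq U$, while $(x,u)$ lies in the $\tau_{seq}$-closed complement of $U$, a contradiction.

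For (ii), fix $(\xi,\lambda)$ and $(\xi_n,\lambda_n)\to(\xi,\lambda)$, and take any $(x_n,u_n)\in\Sigma(\xi_n,\lambda_n)\subseteq Q(\xi_n,\lambda_n)$. Hypotheses $H(A)_3(iii),(iv)$, $H(F)_3(iii)$ and $H(G)(iii)$ give, exactly as in the proof of Theorem \ref{th6} and uniformly in $n$, a bound for $\{x_n\}$ in $W_p(0,b)$ together with relative compactness of $\{x_n\}$ in $C(T,H)$, while $H(U)(iii)$ makes $\{u_n\}$ bounded, hence relatively weakly compact, in $L^2(T,Y)$. So every such sequence has a subsequence with $x_n\to x$ in $C(T,H)$ and $u_n\stackrel{w}{\rightarrow}u$ in $L^2(T,Y)$, which is precisely (ii).

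The substantive step is (i). Let $(\xi_n,\lambda_n)\to(\xi,\lambda)$, $(x_n,u_n)\in\Sigma(\xi_n,\lambda_n)$, $x_n\to x$ in $C(T,H)$, $u_n\stackrel{w}{\rightarrow}u$ in $L^2(T,Y)$. Proposition \ref{prop12} (the $K_{seq}(s\times w)$-$\limsup$ inclusion) gives $(x,u)\in Q(\xi,\lambda)$, so it remains to show $J(x,u,\xi,\lambda)=m(\xi,\lambda)$. By optimality $J(x_n,u_n,\xi_n,\lambda_n)=m(\xi_n,\lambda_n)$, and $m(\xi_n,\lambda_n)\to m(\xi,\lambda)$ by Theorem \ref{th15}; since $(x,u)\in Q(\xi,\lambda)$ already forces $J(x,u,\xi,\lambda)\ge m(\xi,\lambda)$, it suffices to prove $J(x,u,\xi,\lambda)\le\liminf_n J(x_n,u_n,\xi_n,\lambda_n)$. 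For the state term I would invoke verbatim the estimate carried out in the proof of Theorem \ref{th15} (claim (\ref{eq92})), which, using $H(L)$, the Scorza--Dragoni and Egorov theorems and absolute continuity of the Lebesgue integral, yields $\int_0^b L(t,x_n(t),\lambda_n)\,dt\to\int_0^b L(t,x(t),\lambda)\,dt$ whenever $x_n\to x$ in $C(T,H)$ and $\lambda_n\to\lambda$. For the control term, $H(H)(ii)$ (convexity in $u$, continuity in $\lambda$) together with the growth bound $H(H)(iii)$ and Balder's theorem \cite{3} give $\int_0^b H(t,u(t),\lambda)\,dt\le\liminf_n\int_0^b H(t,u_n(t),\lambda_n)\,dt$. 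Finally $\hat\psi(\xi_n,x_n(b),\lambda_n)\to\hat\psi(\xi,x(b),\lambda)$ by $H(\hat\psi)$, since $x_n(b)\to x(b)$ in $H$. Adding these three contributions gives $\liminf_n J(x_n,u_n,\xi_n,\lambda_n)\ge J(x,u,\xi,\lambda)$, hence $J(x,u,\xi,\lambda)=m(\xi,\lambda)$ and $(x,u)\in\Sigma(\xi,\lambda)$, which is (i).

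I expect the only real obstacle to be the lower-semicontinuity bookkeeping for $J$ when both the trajectory and the parameter move: the state-cost part relies on the somewhat delicate measure-theoretic approximation already performed for Theorem \ref{th15}, and the control-cost part needs the $\lambda$-dependent version of Balder's strong--weak lower semicontinuity theorem rather than a naive fixed-parameter Fatou/convexity argument. Once these ingredients and the continuity of $m$ are in hand, passing from the sequential closedness of $\mathrm{Gr}\,\Sigma$ and the local sequential compactness of its range to sequential upper semicontinuity of $\Sigma$ is entirely routine.
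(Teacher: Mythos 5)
Your proposal is correct and follows essentially the same route as the paper: the same a priori compactness from the proofs of Theorem \ref{th6}/\ref{th15}, Proposition \ref{prop12} for $(x,u)\in Q(\xi,\lambda)$, the lower semicontinuity of $J$ (via the $H(L)$ estimates and Balder's theorem) combined with the continuity of $m$ from Theorem \ref{th15}. The only difference is cosmetic packaging: the paper verifies directly that $\Sigma^-(C)$ is closed for every sequentially closed $C$, whereas you deduce openness of $\Sigma^+(U)$ by contradiction from graph closedness plus local sequential compactness, which is an equivalent formulation of the same argument.
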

\begin{proof}
	Let $C\subseteq C(T,H)\times L^2(T,Y)_w$ be sequentially closed. We need to show that
	$$\Sigma^-(C)=\{(\xi,\lambda)\in H\times E:V(\xi,\lambda)\cap C\neq\emptyset\}$$
	is closed in $H\times E$ (see Section 2). To this end, let $\{(\xi_n,\lambda_n)\}_{n\geq 1}\subseteq\Sigma^-(C)$ and assume that
	$$(\xi_n,\lambda_n)\rightarrow(\xi,\lambda)\ \mbox{in}\ H\times E.$$
	
	Let $(x_n,u_n)\in \Sigma(\xi_n,\lambda_n)\cap C$, $n\in\NN$. We know from the proof of Theorem \ref{th15} that at least for a subsequence, we have
	\begin{equation}\label{eq109}
		x_n\stackrel{w}{\rightarrow}x\ \mbox{in}\ W_p(0,b),\ x_n\rightarrow x\ \mbox{in}\ C(T,H),\ u_n\stackrel{w}{\rightarrow}u\ \mbox{in}\ L^2(T,Y)\ \mbox{as}\ n\rightarrow\infty\,.
	\end{equation}
	
	By (\ref{eq109}) and Proposition \ref{prop12}, we have
	\begin{equation}\label{eq110}
		(x,u)\in Q(\xi,\lambda).
	\end{equation}
	
	Also, we know from the proof of Theorem \ref{th15} that
	\begin{eqnarray*}
		 &&J(x,u,\xi,\lambda)\leq\liminf\limits_{n\rightarrow\infty}J(x_n,u_n,\xi_n,\lambda_n)=\liminf\limits_{n\rightarrow\infty}m(\xi_n,\lambda)=m(\xi,\lambda)\\
		&&\mbox{(see Theorem \ref{th15})},\\
		&\Rightarrow&J(x,u,\xi,\lambda)=m(\xi,\lambda)\ (\mbox{see (\ref{eq110})}),\\
		&\Rightarrow&(x,u)\in\Sigma(\xi,\lambda).
	\end{eqnarray*}
	
	Moreover, from (\ref{eq109}) and since $C\subseteq C(T,H)\times L^2(T,Y)_w$ is sequentially closed, we deduce that $(x,u)\in\Sigma(\xi,\lambda)\cap C$. Therefore $\Sigma^-(C)\subseteq H\times E$ is closed and this proves the desired sequential upper semicontinuity of the multifunction $(\xi,\lambda)\mapsto \Sigma(\xi,\lambda)$.
\end{proof}

\section{Application to Distributed Parameter Systems}

In this section we present an application to a class of multivalued parabolic optimal control problems.

So, let $T=[0,b]$ and let $\Omega\subseteq\RR^N$ be a bounded domain with a Lipschitz boundary $\partial\Omega$. We examine the following nonlinear, multivalued parabolic optimal control problem:
\begin{eqnarray}\label{eq111}
	\left\{\begin{array}{l}
		J(x,u,\xi,\lambda)=\int^b_0\int_{\Omega}L_1(t,z,x(t,z))dzdt+\int^b_0\int_{\Omega}H_1(t,z,u(t,z))dzdt\rightarrow\\
			\inf=m(\xi,\lambda)\\
		\mbox{such that}\ -\frac{\partial x}{\partial t}\in-{\rm div}\,a_{\lambda}(z,Du)+F_1(t,z,x(t,z),\lambda)+g(t,z,\lambda)u(t,z)\\
		\mbox{on}\ T\times \Omega\\
		x|_{T\times\partial\Omega}=0,x(0,z)=\xi(z)\ \mbox{for almost all}\ z\in\Omega,\\
		||u(t,\cdot)||_{L^2(\Omega)}\leq r(t,\lambda)\ \mbox{for almost all}\ t\in T.
	\end{array}\right\}
\end{eqnarray}

Here, $a_{\lambda}:\Omega\times\RR^N\rightarrow 2^{\RR^N}$ ($\lambda\in E$) is a family of multifunctions as in Example \ref{ex11}(b). For the other data of problem (\ref{eq111}), we introduce the following conditions:

\smallskip
$H(F_1):$ $F_1:T\times\Omega\times\RR\times E\rightarrow P_{f_c}(\RR)$ is a multifunction such that
\begin{itemize}
	\item[(i)] for all $(x,\lambda)\in\RR\times E,(t,z)\mapsto F_1(t,z,x,\lambda)$ is measurable;
	\item[(ii)] for almost all $(t,z)\in T\times\Omega$, all $x,y\in\RR$, all $\lambda\in E$, we have
	$$h(F_1(t,z,x,\lambda),F_1(t,z,y,\lambda))\leq k_1(t,z)|x-y|,$$
	with $k_1\in L^1(T,L^{\infty}(\Omega))$;
	\item[(iii)] for almost all $(t,z)\in T\times \Omega$, all $x\in\RR$, all $\lambda\in E$, we have
	$$|F_1(t,z,x,\lambda)|\leq\hat{a}_1(t,z)+\hat{c}|x|,$$
	with $\hat{a}_1\in L^2(T\times\Omega),\hat{c}_1>0;$
	\item[(iv)] for almost all $(t,z)\in T\times\Omega$, all $x\in\RR$, all $\lambda,\lambda'\in E$, we have
	$$h(F_1(t,z,x,\lambda),F_1(t,z,x,\lambda'))\leq\beta(d(\lambda,\lambda'))w(z,|x|),$$
	with $\beta(r)\rightarrow 0$ as $r\rightarrow 0^+$ and $w\in L^{\infty}_{loc}(\Omega\times\RR_+)$.
\end{itemize}

\begin{remark}
	Consider the multifunction $F(t,z,x,\lambda)$ defined by
	$$F(t,z,x,\lambda)=[f(t,z,x,\lambda),\hat{f}(t,z,x,\lambda)]$$
	with $f,\hat{f}:T\times\Omega\times\RR\times E\rightarrow\RR$ two functions such that
	\begin{itemize}
		\item for all $(x,\lambda)\in\RR\times E$, $(t,z)\mapsto f(t,z,x,\lambda),\hat{f}(t,z,x,\lambda)$ are both measurable;
		\item for almost all $(t,z)\in T\times\Omega$, all $x,x'\in\RR$, all $\lambda,\lambda'\in E$, we have
		\begin{eqnarray*}
			&&|f(t,z,x,\lambda)-f(t,z,x',\lambda')|\leq k(t,z)[|x-x'|+d(\lambda,\lambda')]\\
			&&|\hat{f}(t,z,x,\lambda)-\hat{f}(t,z,x',\lambda')|\leq\hat{k}(t,z)[|x-x'|+d(\lambda,\lambda')],
		\end{eqnarray*}
		with $k,\hat{k}\in L^1(T,L^{\infty}(\Omega))$.
	\end{itemize}
\end{remark}

Then this multifunction satisfies hypotheses $H(F_1)$.

\smallskip
$H(g):$ $g:T\times\Omega\times E\rightarrow\RR$ is a Carath\'eodory function (that is, for all $\lambda\in E$, $(t,z)\rightarrow g(t,z,\lambda)$ is measurable and for almost all $(t,z)\in T\times\Omega$, $\lambda\rightarrow g(t,z,\lambda)$ is continuous) and for almost all $(t,z)\in T\times\Omega$ and all $\lambda\in E$, we have $|g(t,z,\lambda)|\leq M$ with $M>0$.

\smallskip
$H(r):$ $r:T\times E\rightarrow\RR_+$ is a Carath\'eodory function (that is, for all $\lambda\in E$, $t\mapsto r(t,\lambda)$ is measurable and for almost all $t\in T$, $\lambda\rightarrow r(t,\lambda)$ is continuous) and for almost all $t\in T$, all $\lambda\in E$, we have
$$0\leq r(t,\lambda)\leq a(t),$$
with $a\in L^2(T).$

Now, we introduce the conditions on the two integrands involved in the cost functional problem (\ref{eq111}).

\smallskip
$H(L_1):$ $L:T\times\Omega\times\RR\times E\rightarrow\RR$ is an integrand such that
\begin{itemize}
	\item[(i)] for all $(x,\lambda)\in\RR\times E,(t,z)\mapsto L_1(t,z,x,\lambda)$ is measurable;
	\item[(ii)] if $\lambda_n\rightarrow\lambda$ in $E$, then for all $x\in L^2(\Omega)$ we have $L_1(\cdot,\cdot,x(\cdot),\lambda_n)\stackrel{w}{\rightarrow}L_1(\cdot,\cdot,x(\cdot),\lambda)$ in $L^1(T\times\Omega)$;
	\item[(iii)] for almost all $(t,z)\in T\times\Omega$, all $x,y\in\RR$, all $\lambda\in E$
	$$|L_1(t,z,x,\lambda)-L_1(t,z,y,\lambda)|\leq c(1+|x|\vee|y|)\rho(t,z,|x-y|),$$
	with $\rho(t,z,r)$ Carath\'eodory, $\rho(t,z,0)=0$ for almost all $(t,z)\in T\times\Omega$ and for almost all $(t,z)$, all $r\in[0,\vartheta]$ we have
	$$0\leq\rho(t,z,r)\leq\beta_{\vartheta}(t,z)$$
	with $\beta_{\vartheta}\in L^1(T\times\Omega)$.
\end{itemize}

\smallskip
$H(H)_1:$ $H_1:T\times\Omega\times\RR\times E\rightarrow\RR$ is an integrand such that
\begin{itemize}
	\item[(i)] for all $(x,\lambda)\in\RR\times E$, $(t,z)\mapsto H_1(t,z,x,\lambda)$ is measurable;
	\item[(ii)] for almost all $(t,z)\in T\times\Omega$, $u\mapsto H_1(t,z,u,\lambda)$ is convex for all $\lambda\in E$, while $\lambda\mapsto H_1(t,z,u,\lambda)$ is continuous for all $u\in\RR$;
	\item[(iii)] for almost all $(t,z)\in T\times\Omega$, all $|u|\leq r_{\lambda}(t,z)$, all $\lambda\in E$, we have
	$$|H_1(t,z,u,\lambda)|\leq\hat{a}_{\lambda}(t,z),$$
	with $\{\hat{a}_{\lambda}\}_{\lambda\in E}\subseteq L^2(T\times\Omega)$ bounded.
\end{itemize}

\smallskip
We consider the following evolution triple:
$$X=W^{1,p}_{0}(\Omega),\ H=L^2(\Omega),\ X^*=W^{-1,p'}(\Omega).$$

Since $2\leq p<\infty$, the Sobolev embedding theorem implies that in this triple the embeddings are compact.

For every $\lambda\in E$, let $A_{\lambda}:X\rightarrow 2^{X^*}\backslash\{\emptyset\}$ be the multivalued map defined by
$$A_{\lambda}(x)=\{-{\rm div}\,g:\ g\in L^{p'}(\Omega,\RR^N),\ g(z)\in a_{\lambda}(z,Dx(z))\ \mbox{for almost all}\ z\in\Omega\}.$$

This map is maximal monotone and if $\lambda_n\rightarrow\lambda$ in $E$, then
$$\frac{d}{dt}+a_{\lambda_n}\xrightarrow{PG}\frac{d}{dt}+a_{\lambda}$$
(see Example \ref{ex11}(b)). So, hypotheses $H(A)_3$ hold. In fact, we can have $t$-dependence at the expense of assuming that $a_{\lambda}$ is single-valued. So, we assume that $a_{\lambda}(t,z,\xi)$ satisfies the conditions of Example \ref{ex11} (a). Then the map $A_{\lambda}:T\times X\rightarrow X^*$ is defined by
$$A_{\lambda}(t,x)(\cdot)=-{\rm div}\,a_{\lambda}(t,\cdot,Dx(\cdot)).$$

In fact, by the nonlinear Green's identity (see Gasinski and Papageorgiou \cite[p. 210]{20}), we have
$$\left\langle A_{\lambda}(t,x),h\right\rangle=\int_{\Omega}(a_{\lambda}(t,z,Dx),Dh)_{\RR^N}dz\ \mbox{for all}\ x,h\in W^{1,p}_{0}(\Omega).$$

As we have already mentioned in Example \ref{ex11}(a), we know from Svanstedt \cite{39} that if $\lambda_n\rightarrow\lambda$ in $E$, then
$$\frac{d}{dt}+a_{\lambda_n}\xrightarrow{PG}\frac{d}{dt}+a_{\lambda}$$
and so hypotheses $H(A)_3$ hold.

As a special case of interest, we consider the situation where the elliptic differential operator is a weighted $p$-Laplacian, that is,
$${\rm div}\,(a_{\lambda}(t,z)|Dx|^{p-2}Dx)\ \mbox{for all}\ x\in W^{1,p}_{0}(\Omega).$$

Here, for every $\lambda\in E,\ a_{\lambda}:T\times\Omega\rightarrow\RR$ is a measurable function such that
\begin{itemize}
	\item	$0<\hat{c}_1\leq a_{\lambda}(t,z)\leq\hat{c}_2$ for almost all $(t,z)\in T\times\Omega$, all $\lambda\in E$;
	\item if $\lambda_n\rightarrow\lambda$ in $E$, then for almost all $t\in T$, $$\frac{1}{a_{\lambda_n}(t,\cdot)^{p'-1}}\stackrel{w}{\rightarrow}\frac{1}{a_{\lambda}(t,\cdot)^{p'-1}} \quad\mbox{in}\ L^1(\Omega).$$
\end{itemize}

For this case we consider the following parametric (with parameter $\lambda\in E$) family of convex (in $\xi\in\RR^N$) integrands:
$$\varphi_{\lambda}(t,z,\xi)=\frac{a_{\lambda}(t,z)}{p}|\xi|^p.$$

Then the convex conjugate of $\varphi_{\lambda}(t,z,\cdot)$ is given by
$$\varphi^*_{\lambda}(t,z,\xi^*)=\frac{1}{p'a_{\lambda}(t,z)^{p'-1}}|\xi^*|^{p'}.$$

By hypothesis we have that
\begin{eqnarray}\label{eq112}
	&&\lambda_n\rightarrow \lambda\ \mbox{in}\ E\Rightarrow\varphi^*_{\lambda_n}(t,\cdot,\xi^*)\rightarrow\varphi^*_{\lambda}(t,\cdot,\xi^*)\ \mbox{in}\ L^1(\Omega)\\
	&&\mbox{for almost all}\ t\in T,\ \mbox{all}\ \xi^*\in\RR^N.\nonumber
\end{eqnarray}

We introduce the integral functional $\Phi_{\lambda}$ defined by
$$\Phi_{\lambda}(t,x)=\int_{\Omega}\varphi_{\lambda}(t,z,Dx)dz\ \mbox{for all}\ (t,x)\in T\times W^{1,p}_{0}(\Omega).$$

By Marcellini and Sbordone \cite{29}, we know that (\ref{eq112}) implies
$$\Phi_{\lambda}(t,x)=\Gamma_{seq}(w)-\Phi_{\lambda_n}(t,x)$$
with $\Gamma_{seq}(w)$ denoting the sequential $\Gamma$-convergence of $\Phi_{\lambda_n}(t,\cdot)$ on $W^{1,p}_{0}(\Omega)_w$ (see Buttazzo \cite{7}). Then it follows from Defranceschi \cite[Theorem 3.3]{12} that
$$a_{\lambda_n}(t,\cdot,\cdot)\stackrel{G}{\rightarrow}a_{\lambda}(t,\cdot,\cdot)\ \mbox{for almost all}\ t\in T$$
and so we conclude from Svanstedt \cite{39} that
$$\frac{d}{dt}+a_{\lambda_n}\xrightarrow{PG}\frac{d}{dt}+a_{\lambda}.$$

Also, let, $Y=H=L^2(\Omega)$ and
\begin{eqnarray*}
	 &&F(t,x,\lambda)=S^2_{F_1(t,\cdot,x(\cdot),\lambda)},\ G(t,u,\lambda)=\{g(t,\cdot,\lambda)u(\cdot):||u||_{L^2(\Omega)}\leq r(t,\lambda)\}\\
	&&U(t,\lambda)=\{u\in L^2(\Omega):||u||_{L^2(\Omega)}\leq r(t,\lambda)\}.
\end{eqnarray*}

Then hypotheses $H(F_1),\ H(g),\ H(r)$ imply that conditions $H(F)_3,\ H(G),\ H(U)$ hold. So, the dynamics of (\ref{eq111}) are described by an evolution inclusion similar to the one in problem (\ref{eq1}).

Finally let
\begin{eqnarray*}
	&&L(t,x,\lambda)=\int_{\Omega}L_1(t,z,x(z),\lambda)dz\ \mbox{for all}\ x\in L^2(\Omega),\\
	&&H(t,u,\lambda)=\int_{\Omega}H_1(t,z,u(z),\lambda)dz\ \mbox{dor all}\ u\in L^2(\Omega).
\end{eqnarray*}

Hypotheses $H(L_1),\ H(H_1)$ imply that conditions $H(L),\ H(H)$ respectively hold.

So, we can apply Theorems \ref{th15} and \ref{th16} and obtain the following result concerning the variational stability of problem (\ref{eq111}).
\begin{prop}\label{prop17}
	If the maps $a_{\lambda}$ are as above and hypotheses $H(F_1),\ H(g),\ H(r),\ H(L_1)$, $H(H_1)$ hold, then for every $(\xi,\lambda)\in L^2(\Omega)\times E$, problem (\ref{eq111}) admits optimal pairs (that is, $\Sigma(\xi,\lambda)\neq\emptyset$) and
\begin{eqnarray*}
	&&(\xi,\lambda)\mapsto m(\xi,\lambda)\ \mbox{is continuous on}\ L^2(\Omega)\times E;\\
	&&(\xi,\lambda)\mapsto\Sigma(\xi,\lambda)\ \mbox{is sequentially usc from}\ L^2(\Omega)\times E\ \mbox{into}\ C(T,L^2(\Omega))\times L^2(T\times\Omega)_w.
\end{eqnarray*}
\end{prop}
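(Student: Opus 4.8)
The plan is to recognize Proposition \ref{prop17} as a direct application of the abstract sensitivity results, so essentially all the work consists in checking that the concrete data of \eqref{eq111}, read through the evolution triple $X=W^{1,p}_0(\Omega)$, $H=Y=L^2(\Omega)$, $X^*=W^{-1,p'}(\Omega)$, satisfy hypotheses $H(A)_3$, $H(F)_3$, $H(G)$, $H(U)$, $H(L)$, $H(H)$, $H(\hat\psi)$, and then to invoke Proposition \ref{prop14} (existence of optimal pairs), Theorem \ref{th15} (continuity of $m$) and Theorem \ref{th16} (sequential upper semicontinuity of $\Sigma$). Since $2\le p<\infty$, the Rellich--Kondrachov theorem gives $X\hookrightarrow H$ compactly, so $(X,H,X^*)$ is a legitimate evolution triple. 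First I would dispose of the leading operator: taking $A_\lambda(x)=\{-{\rm div}\,g:g\in S^{p'}_{a_\lambda(\cdot,Dx(\cdot))}\}$ (or, in the $t$-dependent single-valued version, $A_\lambda(t,x)=-{\rm div}\,a_\lambda(t,\cdot,Dx(\cdot))$ with $a_\lambda$ as in Example \ref{ex11}(a)), conditions $H(A)_3(i)$--$(iv)$ follow from $H(a)'$ (resp. $H(a)$) by the usual arguments: maximal monotonicity of $x\mapsto A_\lambda(x)$ from maximal monotonicity of $\xi\mapsto a_\lambda(z,\xi)$ and a selection argument, and the growth and coercivity bounds by integrating the pointwise estimates over $\Omega$, the uniform-in-$\lambda$ boundedness of the data constants being built into the hypotheses on $a_\lambda$. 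The crucial condition $H(A)_3(v)$, namely $\frac{d}{dt}+a_{\lambda_n}\xrightarrow{PG}\frac{d}{dt}+a_\lambda$ when $\lambda_n\to\lambda$, is exactly what Example \ref{ex11}(b) delivers via Denkowski--Migorski--Papageorgiou \cite{14} (resp. Example \ref{ex11}(a) via Svanstedt \cite{39}); for the weighted $p$-Laplacian one first passes from the assumed $L^1(\Omega)$-convergence of $a_{\lambda_n}(t,\cdot)^{-(p'-1)}$ to $\Gamma$-convergence of the convex integrands $\Phi_{\lambda_n}(t,\cdot)$ on $W^{1,p}_0(\Omega)_w$ (Marcellini--Sbordone \cite{29}), then to $G$-convergence of $a_{\lambda_n}$ (Defranceschi \cite{12}), then to $PG$-convergence.

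Next I would verify the conditions on $F$, $G$ and $U$. With $F(t,x,\lambda)=S^2_{F_1(t,\cdot,x(\cdot),\lambda)}\in P_{f_c}(L^2(\Omega))$, the properties of $F(t,\cdot,\cdot)$ transfer from the pointwise properties of $F_1$ through the standard calculus of decomposable sets: $t$-measurability gives $H(F)_3(i)$; from $h(S^2_{F_1(\cdot,x,\lambda)},S^2_{F_1(\cdot,y,\lambda)})\le\|h(F_1(\cdot,x(\cdot),\lambda),F_1(\cdot,y(\cdot),\lambda))\|_{L^2(\Omega)}$ and $H(F_1)(ii)$ one gets $H(F)_3(ii)$ with $k(t)=\|k_1(t,\cdot)\|_{L^\infty(\Omega)}\in L^1(T)_+$; the linear growth $H(F_1)(iii)$ integrates to $H(F)_3(iii)$; and $H(F_1)(iv)$ integrates to $H(F)_3(iv)$, with the weight remaining bounded on bounded sets because $w\in L^\infty_{loc}(\Omega\times\RR_+)$. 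The control operator $G(t,u,\lambda)=\{g(t,\cdot,\lambda)u(\cdot)\}$ is single-valued and of the form $B_\lambda(t)u$ with $B_\lambda(t)$ the multiplication operator by $g(t,\cdot,\lambda)$, which satisfies $\|B_\lambda(t)\|_{\mathcal L(L^2(\Omega))}\le M$ by $H(g)$; hence its graph is concave, it is $h$-continuous in $(u,\lambda)$ by the Carath\'eodory property of $g$ and dominated convergence, and $|G(t,u,\lambda)|\le M|u|$, controlled on the admissible controls by $H(r)$, so $H(G)$ holds. Finally $U(t,\lambda)=\{u\in L^2(\Omega):|u|\le r(t,\lambda)\}$ is the closed ball of radius $r(t,\lambda)$: it is graph measurable in $t$, $h(U(t,\lambda),U(t,\lambda'))=|r(t,\lambda)-r(t,\lambda')|$ together with the Carath\'eodory property of $r$ gives $h$-continuity of $\lambda\mapsto U(t,\lambda)$, and $|U(t,\lambda)|=r(t,\lambda)\le a(t)\in L^2(T)$; this is $H(U)$.

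For the cost, set $L(t,x,\lambda)=\int_\Omega L_1(t,z,x(z),\lambda)\,dz$ and $H(t,u,\lambda)=\int_\Omega H_1(t,z,u(z),\lambda)\,dz$. Measurability in $t$ and the weak $L^1(T)$-continuity in $\lambda$ are immediate from $H(L_1)(i),(ii)$ and Fubini; the convexity of $u\mapsto H(t,u,\lambda)$, the continuity of $\lambda\mapsto H(t,u,\lambda)$ and the bound $H(t,u,\lambda)\le a(t)(1+|u|^2)$ come straight from $H(H_1)$; and since \eqref{eq111} carries no terminal cost, $\hat\psi\equiv0$, so $H(\hat\psi)$ is trivial. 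The step that will require genuine care is the modulus condition $H(L)(iii)$: from $H(L_1)(iii)$ one only gets $|L(t,x,\lambda)-L(t,y,\lambda)|\le c\int_\Omega(1+|x(z)|\vee|y(z)|)\rho(t,z,|x(z)-y(z)|)\,dz$, and this has to be recast as $(1+|x|\vee|y|)\,\tilde\rho(t,|x-y|)$ for some Carath\'eodory $\tilde\rho$ on $T\times\RR_+$ with $\tilde\rho(t,0)=0$ and locally $L^1(T)$-dominated; the mechanism is to split $\Omega$ into the sublevel set $\{|x(z)-y(z)|\le\delta\}$, where $\rho(t,z,\cdot)$ is controlled via the Scorza--Dragoni theorem and its vanishing at $0$, and its complement, which has measure $O(|x-y|^2/\delta^2)$ by Chebyshev and on which $\rho(t,z,\cdot)$ is dominated by $\beta_\vartheta(t,\cdot)\in L^1(T\times\Omega)$ (using that admissible states are a priori uniformly bounded in $C(T,H)$ by Theorem \ref{th6}, so one may restrict to a bounded set), and then to invoke the absolute continuity of the Lebesgue integral --- this is exactly the device already exploited inside the proof of Theorem \ref{th15}. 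Once all the hypotheses are in place, Proposition \ref{prop14} gives $\Sigma(\xi,\lambda)\ne\emptyset$ for every $(\xi,\lambda)\in L^2(\Omega)\times E$, Theorem \ref{th15} gives continuity of $(\xi,\lambda)\mapsto m(\xi,\lambda)$ on $L^2(\Omega)\times E$, and Theorem \ref{th16} gives the sequential upper semicontinuity of $(\xi,\lambda)\mapsto\Sigma(\xi,\lambda)$ into $C(T,L^2(\Omega))\times L^2(T\times\Omega)_w$, after identifying $C(T,H)=C(T,L^2(\Omega))$ and $L^2(T,Y)=L^2(T,L^2(\Omega))=L^2(T\times\Omega)$.
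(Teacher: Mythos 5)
Your proposal follows the same route as the paper: the paper's Section 5 verifies, exactly as you do, that the concrete data of problem (\ref{eq111}) (read through the triple $W^{1,p}_0(\Omega)\hookrightarrow L^2(\Omega)\hookrightarrow W^{-1,p'}(\Omega)$, with $\hat\psi\equiv 0$) satisfy $H(A)_3$, $H(F)_3$, $H(G)$, $H(U)$, $H(L)$, $H(H)$ --- using Example \ref{ex11} together with Svanstedt \cite{39}, Defranceschi \cite{12} and Marcellini--Sbordone \cite{29} for the $PG$-convergence condition $H(A)_3(v)$ --- and then applies Proposition \ref{prop14}, Theorem \ref{th15} and Theorem \ref{th16}. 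Your additional care in passing from $H(L_1)(iii)$ to the modulus condition $H(L)(iii)$ (Chebyshev splitting plus Scorza--Dragoni and the a priori $C(T,H)$ bound) is more detailed than the paper, which simply asserts that $H(L_1)$ implies $H(L)$, but it does not constitute a different approach.
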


\medskip
{\bf Acknowledgements.} This research was supported in part by the SRA grants P1-0292-0101, J1-6721-0101 and J1-7025-0101.

\end{document}